\documentclass{lmcs} 
\pdfoutput=1

\usepackage{lastpage}
\lmcsdoi{15}{2}{21}
\lmcsheading{}{\pageref{LastPage}}{}{}%
{Feb.~20,~2018}{Jun.~25,~2019}{}

\keywords{Realizability, Lawvere hyperdoctrine, Completion, Formal Church thesis}

\usepackage{hyperref}
\usepackage[british]{babel}
\usepackage[all]{xy}
\SelectTips{cm}{}

\usepackage{amsthm,amssymb,mathrsfs,xspace}
\def\nameit#1{#1~}
\def\thx{\nameit{Theorem}}
\def\lmx{\nameit{Lemma}}
\def\prx{\nameit{Proposition}}
\def\crx{\nameit{Corollary}}
\def\dfx{\nameit{Definition}}
\def\rmx{\nameit{Remark}}
\def\exx{\nameit{Example}}

\mathcode`\:="603A 
\mathchardef\colon="303A 

\DeclareFontFamily{OT1}{pzc}{}
\DeclareFontShape{OT1}{pzc}{m}{it}{<->s*[1.14]pzcmi7t}{}
\DeclareMathAlphabet{\mathpzc}{OT1}{pzc}{m}{it}

\def\thmitem{\def\labelenumi{{\normalfont(\roman{enumi})}}}
\def\dfn#1{{\bfseries\itshape #1\/}}

\def\des#1{\ensuremath{\ct{D}es_{#1}}\xspace}
\def\id#1{\ensuremath{\mathrm{id}_{#1}}}
\def\ev{\ensuremath{\mathrm{ev}}}
\def\Id#1{\ensuremath{\mathrm{Id}_{#1}}}
\def\op{^{\textrm{\scriptsize op}}}
\def\opp{\strut^{\textrm{\tiny op}}}

\def\ct#1{\ensuremath{\mathpzc{#1}}\xspace}
\def\Ct#1{\ensuremath{{\normalfont\textsf{\bfseries #1}}}\xspace}
\def\spe#1{\ensuremath{\mathbf{#1}}}
\def\rst{\mathord{\restriction}}
\def\Kl#1{\ensuremath{\mathbb{K}_{#1}}}

\def\RB#1{\mathchoice
{\rotatebox[origin=c]{180}{$#1$}}
{\rotatebox[origin=c]{180}{$#1$}}
{\rotatebox[origin=c]{180}{$\scriptstyle#1$}}
{\rotatebox[origin=c]{180}{$\scriptscriptstyle#1$}}}
\def\D{\RB{E}\kern-.3ex}
\def\B{\RB{A}\kern-.6ex}

\def\exl{_{\textrm{\normalfont\scriptsize ex/lex}}}
\def\rgl{_{\textrm{\normalfont\scriptsize reg/lex}}}
\def\cmp#1{\ensuremath{\{\kern-2.5pt|{#1}|\kern-2.5pt\}}}
\def\Set{\ct{Set}}
\let\Iff\Leftrightarrow
\def\spsymb#1{\ensuremath{\mathsf{#1}}\xspace}
\def\NN{\spsymb{N}}
\def\N{\ensuremath{\mathbb{N}}\xspace}
\def\oo{\spsymb{0}}
\def\ss{\spsymb{s}}

\def\TU{T}
\def\UU{U}
\def\EH{\Ct{ED}}
\def\ISL{\Ct{InfSL}}
\def\Hey{\Ct{Heyt}}

\def\eff{\ct{Eff}}
\def\ass{\ct{Asm}}
\def\pass{\ct{PAsm}}

\def\ha{{\bf HA}\xspace}

\def\stm{\ensuremath{\pw\Gamma}\xspace}
\def\mf{{\bf MF}\xspace}
\def\vuoto{}
\def\LL#1{\ensuremath{\relax\def\prova{#1}\relax
\D_{\ifx\prova\vuoto\blank\else#1\fi}(\tt)}}
\def\RR{\ec{\cmp{\blank}}}

\def\Q#1{\ensuremath{\widehat{#1}}}
\def\ec#1{\ensuremath{\left[{#1}\right]}}
\def\blank{\mathrm{-}}
\def\ple#1{\ensuremath{\langle #1\rangle}}
\def\epl#1{\ensuremath{\langle\kern-.55ex\langle #1\rangle\kern-.55ex\rangle}}
\def\tur#1{\ensuremath{\varphi_{#1}}}
\def\fs#1{\ensuremath{{#1_0}}}
\def\sn#1{\ensuremath{{#1_1}}}
\def\rz#1{\ensuremath{R_{#1}}}
\def\rkl{\textrm{\scriptsize Kleene}}
\def\dsi#1{\ensuremath{{#1}^{\vphantom{I}\smash{\Vdash}}}}
\def\gm#1{\ensuremath{\gamma_{#1}}}
\def\A#1{\ensuremath{{\mathcal R}_{{#1}}}}
\def\fp#1{\ensuremath{P_{#1}}}
\def\pr{\mathrm{pr}}
\def\fst{{\pr_1}}\def\snd{{\pr_2}}
\def\pw{\ensuremath{\mathbb{P}}}

\def\qu(#1){\overline{#1}}
\def\eq#1{\mathrel{\mathord=_{#1}}}
\let\Land\wedge

\let\Implies\Rightarrow
\let\tt\top
\let\ff\bot
\def\Forall#1.{\forall_{#1\ }}
\def\Exists#1.{\exists_{#1\ }}
\def\Existu#1.{\exists!_{#1\ }}
\def\PI#1.{\Pi_{#1\ }}
\def\SIGMA#1.{\Sigma_{#1\ }}
\def\vdashv{\dashv\vdash}

\def\larr{\mathrel{\xymatrix@1@=3.5ex{*=0{}\ar[];[r]&*=0{}}}}
\def\to{\mathrel{\xymatrix@1@=2.5ex{*=0{}\ar[];[r]&*=0{}}}}
\def\ntr{\mathrel{\xymatrix@1@=2.5ex{*=0{}
\ar[];[r]|{\raisebox{.9ex}{\makebox[0pt]{$\cdot$}}}&*=0{}}}}

\let\ftr\larr
\def\Sub{\mathop{{}\mathrm{Sub}}\nolimits}
\def\SSub{\mathop{{}\mathrm{Sts}}\nolimits}
\def\Sb#1{\ensuremath{\Sub_{#1}}}
\def\Ssb#1{\ensuremath{\SSub_{#1}}}
\def\Wsb#1{\ensuremath{\Psi\kern-.4ex_{#1}}}

\def\wsd{variational doctrine\xspace}
\def\la#1.#2{\ensuremath{\lambda#1.#2}}
\def\hyper{hyperdoctrine\xspace}

\def\whyper{weak hyperdoctrine\xspace}
\def\whypers{weak hyperdoctrines\xspace}

\def\nno{\mbox{pnno}\xspace}
\def\wnno{\mbox{wpnno}\xspace}
\def\skolem{Skolem\xspace}
\def\ska{\skolem arrow\xspace}
\def\aritm{arithmetic\xspace}
\def\Aritm{Arithmetic\xspace}

\def\nfbf#1{\mbox{\normalfont\textsf{#1}}\xspace}
\def\ttch{\nfbf{Rec}}
\def\kleene#1{\ensuremath{\nfbf{K}_{#1}}}
\def\wttct{\nfbf{TCT}}
\def\ttct{\nfbf{TCT}}
\def\fct{\nfbf{CT}}

\def\ruc{{\normalfont(RUC)}\xspace}
\def\ac{{\normalfont(AC)}\xspace}
\def\auc{{\normalfont(AUC)}\xspace}
\def\rc{{\normalfont(RC)}\xspace}

\def\rcn{{\normalfont(AC$_{\NN}$)}\xspace}

\def\ie{{\em i.e.}\xspace}
\def\eg{{\em e.g.}\xspace}
\def\loccit{{\em loc.cit.}\xspace}



\begin{document}

\title[Element.\ Quotient Completions,
Church's Thesis, Part.\ Assemblies]
{Elementary Quotient Completions,\\
Church's Thesis, and Partitioned Assemblies}

\author[M.~E.~Maietti]{Maria Emilia Maietti\rsuper{a}}
\author[F.~Pasquali]{Fabio Pasquali\rsuper{b}}

\address{\lsuper{a,b}Dipartimento di Matematica ``Tullio Levi Civita'',
Universit\`a di Padova,
via Trieste 63, 35121 Padova, Italy,}
\email{maietti@math.unipd.it}
\email{pasquali@dima.unige.it}

\author[G.~Rosolini]{Giuseppe Rosolini\rsuper{c}}
\address{\lsuper{c}DIMA, Universit\`a di Genova,
via Dodecaneso 35, 16146 Genova, Italy}
\email{rosolini@unige.it}
\thanks{Projects EU-MSCA-RISE project 731143 "Computing with Infinite
Data" (CID), MIUR-PRIN 2010-2011 and Correctness by Construction (EU
7th framework programme, grant no.~PIRSES-GA-2013-612638) provided
support for the research presented in the paper.}




\begin{abstract}\noindent
Hyland's effective topos offers an important realizability model
for constructive mathematics in the form of a category whose internal
logic validates Church's Thesis. It also contains a
boolean full sub-quasitopos of ``assemblies'' where only a
restricted form of Church's Thesis survives.
In the present paper we compare the effective topos and the quasitopos
of assemblies each as the elementary quotient completions of a Lawvere
doctrine based on the partitioned assemblies. In that way we can
explain why the two forms of Church's Thesis each category satisfies
differ by the way each is inherited from specific properties of the
doctrine which determines the elementary quotient completion.
\end{abstract}

\maketitle

\section*{Introduction}

Hyland's paper ``The Effective Topos'' \cite{eff}, introducing and
studying the category \eff in the title of the paper, opened a
new way to apply techniques developed in realizability to analyse
extensively various aspects of constructive mathematics and of computer 
science, combining them with the essential use of category theory, see 
\cite{oostenstory,RosoliniG:disoet,HylandJ:algtpm,RosoliniG:extp,FreydP:funp}. 
The effective topos is the first example of an elementary
non-Grothendieck topos with a natural number object. It also provides
a computational interpretation of the logic of a topos, see
\cite{BoileauA:logt}, and \cite{MaiettiM:modcbd} for a dependent
type-theoretic version of it. Indeed the interpretation of the
internal logic in \eff extends Kleene's realizability interpretation
of Intuitionistic Arithmetic \cite{KleeneS:intint}, validates formal
Church's Thesis \fct, and the statement that every Cauchy real
is computable, see \cite{eff}.

In \loccit, the full subcategory \ass on the $\neg\neg$-separated
objects of \eff is also introduced and studied---those objects have
later been christened ``assemblies'', hence the shorthand \ass for the
full subcategory they determine, see
\cite{CarboniA:catarp,OostenJ:reaait}. In the category \ass the
endoarrows on the natural number object correspond exactly to the
computable functions. But in the (boolean) logic of the strong
subobjects of the quasitopos \ass, not all Cauchy reals defined as
(equivalence classes of certain) functional relations are computable
and \fct does not hold. In that logic, only a restricted form of
\fct---expressing internally that the arrows on the natural numbers in
\ass are computable---survives and it is called Type-Theoretic
Church's Thesis, in short $\wttct$, see \dfx\ref{churchThesis} for the
precise forms of these principles. That in turn implies that the Axiom
of Unique Choice, even on the natural numbers, does not hold in the
logic of strong subobjects in \ass. Instead the Axiom of Unique
Choice, and even the Axiom of Countable Choice and \fct, hold in
the internal logic of subobjects of \ass, see \rmx\ref{remq}.

In this paper we show that each of the categories \eff and
\ass can be viewed as the domain $\ct{Q}_P$ of the ``elementary
quotient completion'' $$\Q{P}:\ct{Q}_P\op\ftr\ISL$$
of a doctrine $P:\ct{C}\op\ftr\ISL$, as introduced in
\cite{MaiettiME:quofcm,MaiettiME:eleqc}.
Intuitively, $\ct{Q}_P$ is obtained from $P:\ct{C}\op\ftr\ISL$ by
freely adding quotients of the equivalence relations specified by $P$,
while $\Q{P}$ extends $P$ to the new sorts of $\ct{Q}_P$ in an
appropriate way. The two doctrines giving rise to \eff and \ass have
the same domain \pass, the full subcategory of \ass (therefore of
\eff) on the partitioned assemblies. Specifically:
\begin{enumerate}
\item The doctrine \Sb{\eff} of the subobjects on \eff is the
elementary quotient completion of the doctrine $\Wsb{\pass}$ of
variations on \pass. The intuition about a doctrine of the form
\Wsb{\ct{A}} for a category \ct{A} dates back to the original paper
\cite{LawvereF:adjif} and the term ``variation'' was introduced in
\cite{GrandisM:weasec}, see \exx\ref{ltae}(b).
\item The doctrine \Ssb{\ass} of strong subobjects on \ass is the
elementary quotient completion of the boolean doctrine \stm on \pass
which is the composition of the powerset functor with the global
section functor $\Gamma:\pass\ftr\Set$.
\end{enumerate}

After showing some general transfer principles describing how
the validity of choice principles and the principles \fct and
\wttct transfers from a doctrine $P$ to its elementary quotient
completion $\Q{P}$ we conclude that:
\begin{enumerate}
\item The doctrine \Ssb{\ass} satisfies only \wttct (but not \fct) as
a direct consequence of the validity of \wttct in \stm. Thanks to an
adjoint situation between \stm and \Wsb{\pass}, we can prove that \wttct 
is inherited by \Wsb{\pass}. And this is strengthened to the full
validity of \fct in \Wsb{\pass} by choice principles.
\item In the logic of \eff the validity of \fct and of choice
principles on partitioned assemblies is a direct consequence of
the validity of corresponding principles on the doctrine \Wsb{\pass}. 
Also the fact that the logic of \eff extends Kleene's realizability
interpretation of Intuitionistic Arithmetic \cite{KleeneS:intint} is
again inherited by \Wsb{\pass}. 
\end{enumerate}
These results on \eff are to be compared with the original
construction of \eff via the tripos-to-topos construction in
\cite{HylandJ:trit} applied to a hyperdoctrine with domain the
category \Set of sets and functions. That hyperdoctrine does not
validate Intuitionistic Arithmetic (neither does it extend Kleene's
realizability interpretation of Intuitionistic Arithmetic!) but
nevertheless it produces the topos \eff whose subobject doctrine
does.

Section~\ref{rcp} collects basic notions about elementary doctrines
$P:\ct{C}\op\ftr\ISL$, introduced in
\cite{LawvereF:adjif,LawvereF:equhcs} as well as the construction of
the elementary quotient completion $\Q{P}:\ct{Q}_P\op\ftr\ISL$. In
section~\ref{scp} we recall some transfer results for some logical
principles between an elementary doctrine and its elementary quotient
completion including a characterization of the doctrine of variations
via a choice principle. In section~\ref{ads} we introduce arithmetic
doctrines, which are doctrines with a parameterized natural number
object which satisfy induction in the sense of the logic determined by
$P$, and we prove that the property of being arithmetic transfers from
suitable doctrines to their elementary quotient completions. We also
prove transfer results for \fct and \wttct. In section~\ref{eqcopa} we
show that the doctrine of subobjects on \eff is the quotient
completion of the doctrine of variations on \pass, and that the
doctrine of strong subobjects on \ass is the quotient completion of
the doctrine \stm on \pass. We then apply the general transfer
principles proved before to deduce the validity of \fct in \Sb{\eff}
and the validity of \wttct in \Ssb{\ass}. Finally in section~\ref{kri}
we justify why \Sb{\eff} extends Kleene's realisability interpretation
of Intuitionistic Arithmetic as a consequence of the facts that
\Wsb{\pass} does so and that \Sb{\eff} is the elementary quotient
completion of $\Wsb{\pass}$.

We would like to thank the referees for their very valuable comments
which were very useful to improve the presentation.

\section{Elementary quotient completions: a brief recap}\label{rcp}

In this section we review some notions and results about elementary
doctrines and their elementary quotient completion, which was
introduced in \cite{MaiettiME:quofcm,MaiettiME:eleqc} and studied
extensively in a series of papers which will be mentioned in due
course.

Let \ct{C} be a category with binary products 
\[\xymatrix{A_1&A_1\times A_2\ar[l]_{\fst}\ar[r]^{\snd}&A_2}\]
for every pair of objects $A_1$ and $A_2$ in \ct{C},
and a terminal object $T$.
Recall from \cite{MaiettiME:quofcm,MaiettiME:eleqc} 
that a \dfn{primary doctrine} on \ct{C} is an indexed
inf-semilattice $P:\ct{C}\op\ftr\ISL$, \ie a (contravariant) functor
$P:\ct{C}\op\ftr\Ct{Pos}$ such that each poset $P(C)$ is an
$\Land$-semilattice and for every arrow $f:A\to B$ in \ct{C} the
monotone map $\fp{f}:P(B)\to P(A)$ is a $\Land$-homomorphism---note
the reversed direction!---and one declares a primary doctrine
\dfn{elementary} when, for every object $A$ in \ct{C}, there is an
object $\delta_A$ in $P(A\times A)$ such that for every arrow $e$ of
the form $\ple{\fst,\snd,\snd}:X\times A\to X\times A\times A$ in
\ct{C}, the assignment 
\[\D_{e}(\alpha)\colon=
\fp{\ple{\fst,\snd}}(\alpha)\Land \fp{\ple{\snd,\pr_3}}(\delta_A)\]
for $\alpha$ in $P(X\times A)$ determines a left adjoint to 
the map $\fp{e}:P(X\times A\times A)\to P(X\times A)$.

Elementary doctrines are the cloven Eq-fibrations of
\cite{JacobsB:catltt} and, as explained in \loccit, there is a
deductive logical calculus associated with them: it is the fragment
of Intuitionistic Logic with conjunctions and equality over a type
theory with a unit type and the binary product type constructor. From
now on, we shall employ the logical language introduced in \loccit and
often write
\[
a_1:A_1,\ldots,a_k:A_k\mid \phi_1(a_1,\ldots,a_k),\ldots,
\phi_n(a_1,\ldots,a_k)\vdash \psi(a_1,\ldots,a_k)
\]
in place of
\[\phi_1\Land\ldots\Land\phi_n\leq \psi\]
in $P(A_1\times\ldots\times A_k)$. Note that, in line with \loccit,
$\delta_A(a,a')$ will be written as $a:A,a':A\mid a\eq{A}a'$. Also we
write $a:A\mid\alpha(a)\vdashv\beta(a)$ to abbreviate the two
facts that $a:A\mid\alpha(a)\vdash\beta(a)$ and
$a:A\mid\beta(a)\vdash\alpha(a)$.

\begin{exas}\label{ltae}
\noindent(a)
The doctrine of subobjects on a category \ct{C} with finite
limits will be denoted as $\Sb{\ct{C}}:\ct{C}\op\ftr\ISL$---the
elementary structure is provided by the diagonal arrows.

\noindent(b)
Another example is provided by the \dfn{doctrine of variations}
$\Wsb{\ct{S}}:\ct{S}\op\ftr\ISL$ of \ct{S}, where \ct{S} is a category
with binary products and weak pullbacks. The fibre on the object $A$
in \ct{S} is the poset reflection of the comma category
$\ct{S}/A$, see \cite{GrandisM:weasec}, the action on arrows is
given by weak pullbacks. 
\end{exas}
The categorical approach makes it possible to express precisely how
the doctrines are related as category theory suggests directly what
``homomorphisms of doctrines'' should be. In fact, one introduces the
2-category \EH of elementary doctrines which has
\begin{description}
\item[1-arrows $(F,b):P\to R$] pairs $(F,b)$ where
$F:\ct{C}\to\ct{D}$ is a functor and $b:P\ntr R\circ F\op$ is a
natural transformation as in the diagram
\[
\xymatrix@C=4em@R=1em{
{\ct{C}\op}\ar[rd]^(.4){P}_(.4){}="P"\ar[dd]_{F\opp}&\\
& {\ISL}\\
{\ct{D}\op}\ar[ru]_(.4){R}^(.4){}="R"&\ar"P";"R"_b^{\kern-.4ex\cdot}}
\]
where the functor $F$ preserves products and, for every object $A$ in
\ct{C}, the functor $b_A:P(A)\to R(F(A))$ preserves finite infima and 
\[
b_{A\times A}(\delta_A)= R_{\ple{F(\fst),F(\snd)}}(\delta_{F(A)});
\]
\item[2-arrows $\theta:(F,b)\to(G,c)$] natural
transformations $\theta:F\ntr G$ such that for every $A$ in \ct{C} and
every $\alpha$ in $P(A)$, one has that
$b_A(\alpha)\leq_{F(A)} R_{\theta_A}(c_A(\alpha))$.
\end{description}

\begin{exas}\label{tae}
Given a category \ct{C} with
products and pullbacks, one can consider the two indexed posets:
that of subobjects $\Sb{\ct{C}}:\ct{C}\op\ftr\ISL$ and that
of variations 
$\Wsb{\ct{C}}:\ct{C}\op\ftr\ISL$. Recall that $\Wsb{\ct{C}}(A)$ is
the poset reflection of the comma category $\ct{C}/A$. Its inclusion
in $\Sb{\ct{C}}(A)$ extends to a 1-arrow from \Sb{\ct{C}} to
\Wsb{\ct{C}}.
\end{exas}

Recall that a category \ct{C} with binary products is
\dfn{weakly cartesian closed} if 
for every pair of objects $A$ and $B$ there is an object $W$ and an
arrow $\ev:W\times A\to B$ such that for every $f:C\times A\to B$
there is $g:C\to W$ with $\ev(g\times \id{A})=f$. Since a category
\ct{C} is cartesian closed when every mediating arrow $g$ in the
condition above is unique, we refer to $W$ as a \dfn{weak exponential}
of $B$ with $A$ and to the arrow $\ev:W\times A\to B$ as a
\dfn{weak evaluation}.

The category \Set is cartesian closed, while the category of
topological spaces and continuous functions is notoriously not
cartesian closed, but it is weakly cartesian closed, see
\cite{RosoliniG:loccce}. 

A \dfn{\whyper} $P:\ct{C}\op\ftr\ISL$ is an elementary doctrine such that
\begin{enumerate}\thmitem
\item \ct{C} is weakly cartesian closed;
\item $P$ factors through the category \Hey of Heyting algebras
and Heyting algebras homomorphisms;
\item for every product projection $\fst:A\times B\to A$ the
monotone map $\fp{\fst}$ has a left adjoint
$\D_{\fst}:P(A)\to P(A\times B)$ and a right adjoint
$\B_{\fst}:P(A)\to P(A\times B)$
\item 
for every arrow $f:X\to A$ 
the canonical inequalities $\D_{\fst'}\fp{f\times \id{B}}\le
\fp{f}\D_{\fst}$ and $\fp{f}\B_{\fst} \le
\B_{\fst'}\fp{f\times\id{B}} $, where $\fst:A\times B\to A$ and
$\pr':X\times B\to X$ are projections, are equalities. 
\end{enumerate}
A \whyper $P:\ct{C}\op\ftr\ISL$ is a \dfn{\hyper} if \ct{C} is
cartesian closed.

When $P:\ct{C}\op\ftr\ISL$ is a \whyper we may write that
$P:\ct{C}\op\ftr\Hey$. Also we shall refer to condition (iv) as the
\dfn{Beck-Chevalley condition}. 

Similarly to the case of elementary doctrines, in line with
\cite{JacobsB:catltt} and \cite{PittsAM:catl}, one can associate a
deductive logical calculus to a \hyper $P:\ct{C}\op\ftr\Hey$:
it is a predicate calculus with equality and a lambda constructor over
a type theory with a unit type, a binary-product type constructor and a
function type constructor. We shall employ the following notation:
Given a term $(a:A,c:C\mid t:B)$ in \ct{C}, the term 
$(c:C\mid\la a:A.t:W)$ is such that the terms $(a:A,c:C\mid t:B)$ and
$(a:A,c:C\mid\ev(\la a:A.t,a):B)$ are equal.
Also a term $(c:C\mid s:W)$ in \ct{C} is equal to the term
$(c:C\mid \la a:A.\ev(s,a):W)$.
Given the well formed formulas $a:A\mid\phi(a)$ and $a:A\mid \psi(a)$
we write \[a:A\mid \phi(a)\lor\psi(a)\qquad a:A\mid
\phi(a)\Implies\psi(a)\] to denote joins and Heyting implication. The
least element will be $a:A\mid \ff$. As is customary, we abbreviate
$a:A\mid \phi(a)\Implies\ff$ with $a:A\mid \neg\phi(a)$. 
For a projection $\fst:A\times B\to A$ and for $\phi$ in
$P(A\times B)$ we shall write $\D_{\fst}(\phi)$ and
$\B_{\fst}(\phi)$ in $P(A)$ as 
\[a:A\mid\Exists b:B.\phi(a,b)\qquad a:A\mid\Forall b:B.\phi(a,b).\]

\begin{rem}
There is instead a radical difference in case $P:\ct{C}\op\ftr\Hey$ is
a \whyper---and in some sense this shows the usefulness of the
categorical presentation. The weakened condition, stripped of
uniqueness, allows to introduce a $\lambda$-notation, but in general
the terms $(c:C\mid s:W)$ and $(c:C\mid \la a:A.\ev(s,a):W)$ do not
coincide, and more importantly, it is not possible to substitute
\emph{inside} a $\lambda$-term. So for a \whyper we shall use all the
above but with no reference to $\lambda$-terms, namely concerning
function types we just use the evaluation constructor.
\end{rem}

\begin{rem}\label{nondipendedaeval}
If $\ev:W\times A\to B$ and $\ev':W'\times A\to B$ are two weak
evaluation maps, then 
\[f:W\vdash \Exists f':W'.\Forall a:A.[\ev(f,a)=\ev'(f',a)]\]
\end{rem}

It is easy to see that, for $P:\ct{C}\op\ftr\Hey$ a
\whyper on \ct{C}, for every arrow $f:A\to B$ the monotone map
$\fp{f}:P(B)\to P(A)$ has 
a left adjoint $\D_f:P(A)\to P(B)$ and a right adjoint $\B_f$ that
send $a:A\mid \alpha(a)$ in $P(A)$ respectively to 
\[b:B\mid \Exists a:A.
\left[\left[f(a)\eq{B}b\right]\Land \alpha(a)\right]
\quad\mbox{ and }\quad
b:B\mid \Forall a:A. 
\left[\left[f(a)\eq{B}b\right]\Implies \alpha(a)\right]\]

We shall employ logical wording to mark certain situations in 
an elementary doctrine $P:\ct{C}\op\ftr\ISL$. For the terminal object
$1$ in \ct{C}, we call an element of $P(1)$ a \dfn{sentence}.
For a sentence $\alpha$ in $P$ such that $\tt\leq\alpha$ we write
$\vdash\alpha$.

\begin{exas}\label{propex}
(a) The doctrine \Sb{\ct{C}} in \exx\ref{ltae}(a) is a (weak)
\hyper if and only if \ct{C} is a (weakly) cartesian closed Heyting
category.

\noindent(b) If \ct{C} is (weakly) locally cartesian closed with
finite (weak) coproducts, the doctrine \Wsb{\ct{C}} in
\exx\ref{ltae}(b) is a (weak) \hyper. Since \whypers of the form
\Wsb{\ct{C}} play a central role in the paper, we find it convenient
to denote the left adjoint along $\Wsb{\ct{C}}(f)$ as $\Sigma_f$ and
the right adjoint as $\Pi_f$.
\end{exas}

Let $P:\ct{C}\op\ftr\Hey$ and $R:\ct{C}\op\ftr\Hey$ be
\whypers. Suppose the natural transformations $r:P\ntr R$ is a
1-arrow of doctrines in \EH; $r$ is a right adjoint if there is a
1-arrow of doctrines $l:R\ntr P$ such that $\Id{R}\le r\circ l$ and
$l\circ r\le \Id{P}$. 
This adjoint pair satisfies the \textit{Frobenius reciprocity} if, for
all $A$ in $\ct{C}$, for all $\alpha$ in $P(A)$ and all $\beta$ in
$R(A)$ it holds that
$l_A(\beta)\Land\alpha=l_A(\beta\Land r_A(\alpha))$. In the following
proposition we use superscript to distinguish operations in $P$ from
the corresponding operations in $R$.

\begin{prop}\label{addexteraperdextera}
If a 1-arrow of doctrine $r:P\ntr R$ is a right adjoint, then for
every $\alpha$ in $P(X\times Y)$ 
\[r_X\B^P_{\fst}(\alpha)=\B^R_{\fst}r_{X\times Y}(\alpha).\]
Moreover for every $\gamma$ and $\beta$ in $P(A)$ it holds
\[r_A(\gamma\Implies^P\beta)=r_A(\alpha)\Implies^R r_A(\beta)\]
if and only if the adjoint pair satisfies the Frobenius reciprocity.
\end{prop}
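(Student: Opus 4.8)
The plan is to unpack the hypothesis into two usable facts. Since $r$ is a right adjoint, there is a $1$-arrow $l:R\ntr P$ with $\Id{R}\le r\circ l$ and $l\circ r\le\Id{P}$; as the underlying functors are the identity on \ct{C}, the composite $1$-arrows have components $r_A\circ l_A$ and $l_A\circ r_A$, so these two $2$-cells amount fibrewise to the order-adjunction $l_A\dashv r_A$, that is $\beta\le r_A(l_A(\beta))$ for $\beta$ in $R(A)$ (the unit) and $l_A(r_A(\alpha))\le\alpha$ for $\alpha$ in $P(A)$ (the counit). Moreover $l$, being a $1$-arrow of \EH, is a natural transformation and each $l_A$, like each $r_A$, preserves finite infima. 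I would use naturality of $l$ for the first equation and the Heyting structure of the two fibres for the second.

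For the equation $r_X\B^P_{\fst}(\alpha)=\B^R_{\fst}r_{X\times Y}(\alpha)$ I would argue by uniqueness of right adjoints; recall that the right adjoints $\B_{\fst}$ along projections exist by the \whyper hypothesis. On one side, $\B^R_{\fst}\circ r_{X\times Y}$ is the composite of the right adjoint of $R_{\fst}$ and the right adjoint of $l_{X\times Y}$, hence is itself right adjoint to $l_{X\times Y}\circ R_{\fst}$. On the other side, $r_X\circ\B^P_{\fst}$ is the composite of the right adjoints of $l_X$ and of $\fp{\fst}$, hence right adjoint to $\fp{\fst}\circ l_X$. Naturality of $l$ along the projection $\fst\colon X\times Y\to X$ gives exactly $l_{X\times Y}\circ R_{\fst}=\fp{\fst}\circ l_X$, so the two composites are right adjoint to one and the same monotone map between $P(X\times Y)$ and $R(X)$, and therefore coincide.

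For the characterisation, the direction Frobenius $\Rightarrow$ preservation of $\Implies$ is the routine one. Assuming Frobenius, I would fix an arbitrary $\xi$ in $R(A)$ and run the chain of equivalences $\xi\le r_A(\gamma\Implies^P\beta)$ iff $l_A(\xi)\le\gamma\Implies^P\beta$ (by $l_A\dashv r_A$), iff $l_A(\xi)\Land\gamma\le\beta$ (Heyting adjunction in $P$), iff $l_A(\xi\Land r_A(\gamma))\le\beta$ (Frobenius), iff $\xi\Land r_A(\gamma)\le r_A(\beta)$ (by $l_A\dashv r_A$), iff $\xi\le r_A(\gamma)\Implies^R r_A(\beta)$ (Heyting adjunction in $R$). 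Since $\xi$ is arbitrary, $r_A(\gamma\Implies^P\beta)=r_A(\gamma)\Implies^R r_A(\beta)$.

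The converse is the crux. The inequality $l_A(\beta\Land r_A(\alpha))\le l_A(\beta)\Land\alpha$ holds with no hypothesis, because $l_A$ preserves $\Land$ and $l_A(r_A(\alpha))\le\alpha$ by the counit. For the reverse inequality I plan the transposition chain
\[
l_A(\beta)\Land\alpha\le l_A(\beta\Land r_A(\alpha))\ \Iff\ \beta\Land r_A(\alpha)\le r_A(l_A(\beta\Land r_A(\alpha))),
\]
obtained by first moving $\alpha$ to the right as a Heyting implication in $P$, then transposing across $l_A\dashv r_A$, then invoking the hypothesis to rewrite $r_A\big(\alpha\Implies^P(-)\big)$ as $r_A(\alpha)\Implies^R r_A(-)$, and finally undoing the Heyting adjunction in $R$. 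The right-hand inequality is precisely the unit of $l_A\dashv r_A$ at $\beta\Land r_A(\alpha)$, so it holds, giving the missing inequality and with it Frobenius. The delicate point, and the whole content of this direction, is to recognise that preservation of implication is exactly what licenses the single middle step, collapsing the sought identity to the adjunction unit.
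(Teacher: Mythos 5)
Your proof is correct, but note that the paper states \prx\ref{addexteraperdextera} without any proof---it is offered as a routine consequence of the definitions---so there is no official argument to compare against; your write-up supplies the missing details, and it does so along what is surely the intended route. For the first identity you observe that $r_X\circ\B^P_{\fst}$ is right adjoint to $\fp{\fst}\circ l_X$ and that $\B^R_{\fst}\circ r_{X\times Y}$ is right adjoint to $l_{X\times Y}\circ R_{\fst}$, that these two left adjoints coincide precisely by naturality of $l$ (which you may legitimately invoke, since $l$ is required to be a 1-arrow in \EH and not merely a fibrewise family of monotone maps), and you conclude by uniqueness of right adjoints between posets; this has the incidental virtue of making clear that no Beck--Chevalley-type hypothesis is needed here, only naturality. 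For the equivalence with Frobenius reciprocity, the direction from Frobenius to preservation of $\Implies$ is the standard chain of transpositions through the two Galois connections, and you handle the genuinely nontrivial converse correctly: the inequality $l_A(\beta\Land r_A(\alpha))\le l_A(\beta)\Land\alpha$ is free, from meet-preservation of $l_A$ together with the counit, while the reverse inequality transposes, using the hypothesis exactly once, into the unit of $l_A\dashv r_A$ at $\beta\Land r_A(\alpha)$. One further point in your favour: you silently repaired the typo in the statement, reading $r_A(\alpha)$ as $r_A(\gamma)$ in the displayed equation, which is clearly the intended claim.
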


Note that if the adjoint pair is such that $l\circ r = \Id{P}$, then
it satisfies the Frobenius reciprocity.

\prx\ref{addexteraperdextera} proves that right adjoints
commute with right adjoints. 
Oppositely to right adjoints, left adjoints do not commute with
respect to $r:P\ntr R$. We shall see that in our case of interest $r$
commutes with $\D$ exactly when $P$ satisfies a form of choice that we
call \rc, see \thx\ref{car}. Of course, it might be the case that for
some specific formula the property holds, though $P$ does not satisfy
\rc. This motivates the following definition, which is instrumental
for the proofs of the main theorems in section~\ref{eqcopa}.

\begin{defi}\label{skolemformula}
Suppose $P$ is an elementary existential doctrine, $\alpha$ is in
$P(Y\times B)$ and $\epsilon:Y\to B$ is an arrow in \ct{C}. 
We say that $\epsilon$ is a \dfn{\ska for $B$ in $\alpha$} if
$\D_{\fst}\alpha = \fp{\ple{\id{Y},\epsilon}}(\alpha)$, \ie if
$y:Y\mid\Exists b:B.\alpha(y,b)\vdashv\alpha(y,\epsilon(y))$.
\end{defi}

We use the Greek letter $\epsilon$ to denote a Skolem arrow in
view of the strict connection between Skolem terms and
$\epsilon$-terms of Hilbert's $\epsilon$-calculus. 
Here we observe that, if $B$ has Skolem arrows for all formulas, then
$B$ is endowed with an $\epsilon$-operator as defined in \cite{MPR},
which is a stronger property than the Rule of Choice on $B$ introduced
in \dfx\ref{ciaociaociao} (see also \cite{pep16, pep18}).

Doctrines of subobjects are characterized via the notion of
comprehension. Though very general, we shall recall this notion in the
particular case of an elementary doctrine $P:\ct{C}\op\ftr\ISL$. For a
given object $A$ in \ct{C} and an object $\alpha$ in $P(A)$, a
\dfn{weak comprehension} of $\alpha$ is an arrow
$\cmp{\alpha}:X\to A$ in \ct{C} such that 
\[x:X\mid \tt\vdash\alpha(\cmp{\alpha}(x))\]
and, for every $f:Z\to A$ such that 
$z:Z\mid \tt\vdash\alpha(f(z))$,
there is an arrow 
$f':Z\to X$ such that 
$f=\cmp{\alpha}f'$. The arrow $\cmp{\alpha}$ is the \dfn{strong
comprehension} or simply \dfn{comprehension} of $\alpha$ if
\cmp{\alpha} is monic, making the required $f'$ the unique such.

Intuitively, the comprehension arrow represents the inclusion of the
object obtained by comprehending the predicate $\alpha$ over $A$ into
$A$ itself as a form of subtype.

We simply say that the doctrine $P:\ct{C}\op\ftr\ISL$ 
\dfn{has $($weak$)$ comprehensions} 
when every $\alpha$ has a (weak) comprehension arrow. And $P$
\dfn{has full $($weak$)$ comprehensions} if $\alpha\leq\beta$ in 
$P(A)$ whenever $\cmp\alpha$ factors through $\cmp\beta$.

\begin{exa}\label{escomp}
Doctrines of the form \Wsb{\ct{C}} have full weak comprehensions: if
$[f]$ is in $\Wsb{\ct{C}}(A)$ then the
representative $f$ is a weak full comprehension of $[f]$: it is strong
if and only if $f$ is monic. So doctrines of the form \Sb{\ct{C}}
have full comprehensions.
\end{exa}

An elementary doctrine $P:\ct{C}\op\ftr\ISL$ 
\dfn{has comprehensive diagonals} if for every $A$ in \ct{C} the 
diagonal $\Delta_A:A\to A\times A$ is the full comprehension of
$\delta_A$.
It is straightforward to verify that an elementary doctrine has
comprehensive diagonals if and only if any two parallel arrows of
\ct{C}, say $f,g:X\to Y$, are equal whenever $x:X\mid \tt\vdash
f(x)\eq{Y}g(x)$. 

The intuition underlying the construction of the elementary quotient
completion is to add quotients to the domain of the elementary
doctrine with respect to equivalence relations in the fibres of the
doctrine.

In an elementary doctrine $P:\ct{C}\op\ftr\ISL$, if $A$ is an object
in \ct{C}, an object $\rho$ in $P(A\times A)$ is a
\dfn{$P$-equivalence relation on $A$} if it satisfies
\begin{description}
\item[\dfn{reflexivity}] $a:A,a':A\mid a\eq{A}a'\vdash\rho(a,a')$;
\item[\dfn{symmetry}]
$a:A,a':A\mid \rho(a,a')\vdash\rho(a',a)$;
\item[\dfn{transitivity}]
$a:A,a':A,a'':A\mid \rho(a,a')\Land\rho(a',a'')\vdash\rho(a,a'')$.
\end{description}

\begin{exas}\label{exer}
\noindent(a)
For a category \ct{D} with products and pullbacks, consider the
elementary doctrine $\Sb{\ct{D}}:\ct{D}\op\ftr\ISL$ of the subobjects
of \ct{D}. A \Sb{\ct{D}}-equivalence relation is an equivalence
relation in \ct{D}. In particular, \Sb{\Set}-equivalence relations
coincide with the usual notion of equivalence relations. 

\noindent(b)
For a category \ct{C} with products and weak pullbacks, 
consider the elementary doctrine $\Wsb{\ct{C}}:\ct{C}\op\ftr\ISL$ of
the variations. A \Wsb{\ct{C}}-equivalence relation is a
pseudo-equivalence relation in \ct{C}, see \cite{CarboniA:freecl}.
\end{exas}

Given a $P$-equivalence relation $\rho$ on $A$, a
\dfn{$P$-quotient of $\rho$}, or simply a \dfn{quotient} when the
doctrine is clear from the context, is an arrow $q:A\to A/\rho$ in
\ct{C} such that 
\[a;A,a':A\mid\rho(a,a')\vdash q(a)\eq{A/\rho}q(a')\]
and, for every arrow $g:A\to Z$ such that
\[a;A,a':A\mid\rho(a,a')\vdash g(a)\eq{Z}g(a'),\]
there is a unique arrow
$\qu(g):A/\rho\to Z$ 
such that $g=\qu(g) q$. 

We say that such a $P$-quotient is \dfn{stable} if, for every pullback
\[\xymatrix{B\ar[d]_{f'}\ar[r]^{q'}&C\ar[d]^{f}\\A\ar[r]_q&A/\rho}\]
in \ct{C}, the arrow $q'$ is a $P$-quotient. 

For an equivalence relation $\rho$ on $A$, the poset \des{\rho} of
\dfn{descent data} is the sub-poset of $P(A)$ on those $\alpha$ such
that
\[a:A,a':A\mid\alpha(a)\Land\rho(a,a')\vdash \alpha(a').\]

Like for comprehension, it is possible to complete an elementary
doctrine $P:\ct{C}\op\ftr\ISL$ to one with stable quotients of
equivalence relations: the \dfn{elementary quotient completion}
$\Q{P}:\ct{Q}_P\op\ftr\ISL$ of $P$ which was introduced and studied in
\cite{MaiettiME:eleqc,MaiettiME:quofcm,MaiettiME:exacf,Maietti-Rosolini16}.
It is defined as follows
\begin{description}
\item[Objects of $\ct{Q}_P$] $(A,\rho)$ such that $\rho$
is a $P$-equivalence relation on $A$.
\item[Arrows of $\ct{Q}_P$] an arrow $\ec{f}:(A,\rho)\to(B,\sigma)$ is an
equivalence class of arrows $f:A\to B$ in \ct{C} such that 
\[a:A,a':A\mid \rho(a,a')\vdash \sigma(f(a),f(a'))\] in $P(A\times A)$ with
respect to the relation $f\sim g$ which holds if and only if 
\[a:A,a':A\mid \rho(a,a')\vdash\sigma(f(a),g(a')).\]
\item[Composition of $\ct{Q}_P$] that of \ct{C} on representatives.
\item[Identities of $\ct{Q}_P$] are represented by identities of \ct{C}.
\item[The functor $\Q{P}:\ct{Q}_P\op\ftr\ISL$] is defined as
\[\Q{P}(A,\rho)\colon=\des{\rho}.\]
\end{description}

We refer the reader to \cite{MaiettiME:eleqc} for all the
details. We just note that the exact completion in
\cite{CarboniA:somfcr} has a description in terms of the elementary
quotient completion of doctrines: given a category \ct{C} with finite
products and weak pullbacks, the doctrine \Sb{\ct{C}\exl} is
equivalent to the doctrine \Q{\Wsb{\ct{C}}}.

Here we limit ourselves to recall a few properties of the
constructions:
\begin{itemize}
\item the elementary quotients completion has effective quotients: for
an equivalence relation $\sigma$ on $(A,\rho)$, the quotient is given
by \[[\id{A}]:(A,\rho)\to(A,\sigma);\]
\item the equality predicate over $(A,\rho)$ is $\rho$ itself,
\ie $\delta_{(A,\rho)}=\rho$;
\item in case $P$ is a \whyper, the evaluation in $\ct{Q}_P$ 
$[\ev]:(B,\delta_B)^{(A,\delta_A)}\times(A,\delta_A)\to(B,\delta_B)$ 
can be chosen as a weak evaluation $\ev:W\times A\to B$ in \ct{C}, and
$(B,\delta_B)^{(A,\delta_A)}$ is $(W,\theta)$ where $\theta$ in $P(W\times
W)$ is the formula
$t:W,t':W\mid \Forall a:A.\Forall a':A. a\eq{A}a'\Implies
\ev(t,a)\eq{B}\ev(t',a').$
\end{itemize}
It is quite apparent that the elementary structure plays no role in
the definitions of \Q{P}. We refer the reader to
\cite{PasqualiF:cofced, TTT} for an analysis of that.

\section{Some choice principles}\label{scp}

In this section we analyse various forms of choice principle in the
context of existential elementary doctrines.

Let $P:\ct{C}\op\ftr\Hey$ be a \whyper. An element $R$ of
$P(A\times B)$ is often called a \dfn{relation}. We say that a
relation $R$ is \dfn{entire} if
\[a:A\mid \tt\vdash \Exists b:B. R(a,b)\] 
and that it is \dfn{functional} if
\[a:A,b:B,b':B\mid R(a,b)\Land R(a,b')\vdash b\eq{B}b'.\] 
For every arrow $f:A\to B$ the formula in $P(A\times B)$ determined
by \[a:A,b:B\mid f(a)\eq{B}b\] is an entire functional relation,
called the \dfn{$P$-graph} of $f$.

\begin{defi}\label{ciaociaociao}
Let $P:\ct{C}\op\ftr\Hey$ be a \whyper.
\begin{description}
\item[The \dfn{Rule of Unique Choice} \ruc holds in $P$] if for
every entire functional relation $R$ in $P(A\times B)$ there is an
arrow $f:A\to B$ whose $P$-graph is $R$.
\item[The \dfn{Rule of Choice} \rc holds in $P$] if for every
entire relation $R$ in $P(A\times B)$ there is an arrow $f:A\to B$
such that
\[a:A\mid \tt\vdash R(a,f(a)).\]
\item[The \dfn{Rule of Choice} holds on $A$ in $P$] if for every
entire relation $R$ in $P(A\times A)$ there is an arrow $f:A\to A$
such that 
\[a:A\mid \tt\vdash R(a,f(a)).\]
\end{description}
\end{defi}

There are axioms that correspond to \ruc and to \rc respectively.

\begin{defi}\label{AUC}
Let $P$ be a \whyper. Let $A$ be an object of \ct{C}.
We say that \dfn{the Axiom of Unique Choice \auc holds on} $A$
if, for every object $B$ in \ct{C}, for every relation $R$ in
$P(A\times B)$ it is 
\[\Forall a:A.\Existu b:B. R(a,b)\vdash
\Exists f:W.\Forall a:A. R(a,\ev(f,a))\]
where $\ev:W\times A\to B$ is a weak evaluation map.
We say that \dfn{the Axiom of Choice \ac holds on} $A$
if, for every object $B$ in \ct{C}, for every relation $R$ in
$P(A\times B)$ it is 
\[\Forall a:A.\Exists b:B. R(a,b)\vdash
\Exists f:W.\Forall a:A. R(a,\ev(f,a))\]
where 
$\ev:W\times A\to B$ is a weak evaluation map.
When the Axiom of (Unique) Choice holds on every object $A$ in \ct{C},
we say that the \dfn{Axiom of $($Unique$)$ Choice holds in} $P$.
\end{defi}

Clearly, if \ac holds on $A$, then \auc holds on $A$.

Those choice principles are useful to characterize variational
doctrines as shown in \cite{MPR}. That characterization employs also
an adjunction between variational doctrines and an elementary
existential doctrine $P$ with full weak comprehensions as stated in
the following proposition from \loccit.

\begin{prop}\label{aggiunzione}
Suppose $P$ is an elementary existential doctrine on \ct{C} with full
weak comprehensions and comprehensive diagonals. There are arrows of
doctrines
\[\xymatrix@C=5em@R=1em{
\mathcal{C}^{op}\ar@/^/[rrd]^(.35){P}_(.35){}="P"
\ar@/_/@<-1ex>[dd]_{\id{\mathcal{C}}^{op}}="F"&&\\
&& {\ISL}\\
\mathcal{C}^{op}\ar@/_/[rru]_(.35){\Wsb{\mathcal{C}}}^(.35){}="R"&&
\ar@/^/"R";"P"^-{\LL{}\kern.5ex\cdot\kern-.5ex}="b"
\ar@<1ex>@/^/"P";"R"^{\kern-.5ex\cdot\kern.5ex \RR}="c"
\ar@{}"b";"c"|(.55){}}\]
such that $\LL{}\circ\RR=\id{P}$ and
$\id{\Wsb{\ct{C}}}\le\RR\circ\LL{}$.
\end{prop}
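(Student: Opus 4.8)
The plan is to define both arrows of doctrines over the identity functor $\id{\ct{C}}\op$, so that the transformation drawn as $\RR$ has component at $A$ sending an element $\alpha$ of $P(A)$ to the class $\ec{\cmp{\alpha}}$ of a weak comprehension of $\alpha$, and the transformation drawn as $\LL{}$ has component at $A$ sending a class $\ec{f}$, with $f\colon X\to A$, to the direct image $\D_f(\tt)$ of the top element along $f$ (which exists because $P$ is existential). In symbols the two components act as
\[\alpha\mapsto\ec{\cmp{\alpha}}\qquad\mbox{and}\qquad\ec{f}\mapsto\D_f(\tt).\]
The first thing to check is that these assignments are well defined and give 1-arrows of \EH.

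For $\LL{}$, well-definedness on classes and monotonicity both follow from monotonicity of $\D_{(\blank)}$, since $f$ factoring through $g$ gives $\D_f(\tt)\le\D_g(\tt)$; preservation of the top uses $\D_{\id{A}}(\tt)=\tt$, and preservation of binary meets uses the existential structure: if $h$ is the diagonal of the weak pullback of $f$ and $g$ over $A$, with leg $q$ over $f$, then Beck--Chevalley gives $\fp{f}\D_g(\tt)=\D_q(\tt)$ and Frobenius reciprocity gives $\D_h(\tt)=\D_f(\fp{f}\D_g(\tt))=\D_f(\tt)\Land\D_g(\tt)$. Preservation of the equality predicate amounts to $\D_{\Delta_A}(\tt)=\delta_A$, which holds in any elementary existential doctrine, and naturality of $\LL{}$ is exactly the Beck--Chevalley condition along the weak pullbacks that define the reindexing of $\Wsb{\ct{C}}$. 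For $\RR$, monotonicity is immediate from full weak comprehension, preservation of the top follows from $\cmp{\tt}$ being weakly terminal in $\ct{C}/A$, and preservation of binary meets from the fact that a weak pullback of $\cmp{\alpha}$ and $\cmp{\beta}$ is a weak comprehension of $\alpha\Land\beta$ (a short chase with the universal property of weak comprehension in both directions). Preservation of the equality predicate is exactly where comprehensive diagonals enters: it identifies $\cmp{\delta_A}$ with the diagonal $\Delta_A$, whose class represents equality in $\Wsb{\ct{C}}$; naturality is the stability of weak comprehensions under weak-pullback reindexing.

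The two displayed relations are then short. For $\LL{}\circ\RR=\id{P}$ I would prove $\D_{\cmp{\alpha}}(\tt)=\alpha$ for every $\alpha$ in $P(A)$: the inequality $\D_{\cmp{\alpha}}(\tt)\le\alpha$ is the transpose under $\D_{\cmp{\alpha}}\dashv\fp{\cmp{\alpha}}$ of the comprehension law $x:X\mid\tt\vdash\alpha(\cmp{\alpha}(x))$, while the reverse inequality follows from full weak comprehension once one notes that $\cmp{\alpha}$ factors through $\cmp{\D_{\cmp{\alpha}}(\tt)}$, which holds because $x:X\mid\tt\vdash\Exists x':X. \cmp{\alpha}(x')\eq{A}\cmp{\alpha}(x)$ by reflexivity. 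For $\id{\Wsb{\ct{C}}}\le\RR\circ\LL{}$, which is the 2-cell carried by the identity natural transformation, I would show that the value $\ec{\cmp{\D_f(\tt)}}$ of $\RR\circ\LL{}$ at $\ec{f}$ dominates $\ec{f}$, \ie that $f$ factors through $\cmp{\D_f(\tt)}$; this again holds by reflexivity, since $x:X\mid\tt\vdash\Exists x':X. f(x')\eq{A}f(x)$ forces $f$ through the weak comprehension by its universal property.

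I expect the main obstacle to be not these two concluding identities but the verification that $\LL{}$ and $\RR$ preserve finite infima and are natural. Both reduce to the interplay between the weak pullbacks computing meets and reindexing in $\Wsb{\ct{C}}$ and the existential structure of $P$: Frobenius reciprocity together with the Beck--Chevalley condition along weak pullback squares for $\LL{}$, and stability of weak comprehensions under weak-pullback reindexing for $\RR$. These are the steps where the three hypotheses are genuinely used --- existential and full weak comprehensions throughout, and comprehensive diagonals precisely to make $\RR$ preserve the equality predicate --- and they rely on the basic properties of existential doctrines and weak comprehensions recalled in the cited works.
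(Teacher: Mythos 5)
Your construction is exactly the one the paper records (the paper states this proposition as imported from \cite{MPR} and only recalls the two transformations): $\RR_A(\alpha)=\ec{\cmp{\alpha}}$ and $\LL{}$ sending $\ec{f}$ to $\D_f(\tt)$. Your proofs of the two displayed identities are also correct: $\D_{\cmp{\alpha}}(\tt)\le\alpha$ is the adjoint transpose of the comprehension law, the reverse inequality follows from fullness once the unit $\tt_X\le \fp{\cmp{\alpha}}\D_{\cmp{\alpha}}(\tt_X)$ shows that $\cmp{\alpha}$ factors through $\cmp{\D_{\cmp{\alpha}}(\tt)}$, and $\id{\Wsb{\ct{C}}}\le\RR\circ\LL{}$ is the same unit argument applied to $f$. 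The genuine gap is in the step you yourself identify as the main obstacle and then wave through: you justify meet-preservation and naturality of $\LL{}$ by ``the Beck--Chevalley condition along weak pullback squares'', calling it one of ``the basic properties of existential doctrines''. It is not. The Beck--Chevalley condition of a \whyper (condition (iv) in the paper) and of an elementary existential doctrine concerns only squares built from product projections; for reindexing along a general weak pullback square, the equality $\fp{g}\D_f(\tt_X)=\D_{f'}(\tt_W)$ (with $f':W\to B$ a weak pullback of $f:X\to A$ along $g:B\to A$) can fail in an elementary existential doctrine without comprehension hypotheses, so it cannot be cited as an ambient axiom.

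The step is nevertheless provable under the hypotheses of the proposition, and without circularity, because it follows from the identity $\D_{\cmp{\alpha}}(\tt)=\alpha$ which you prove independently of any naturality. Concretely: let $\psi$ in $P(B\times X)$ be the formula $b:B,x:X\mid f(x)\eq{A}g(b)$, so that $\fp{g}\D_f(\tt_X)=\D_{\fst}(\psi)$ by the legitimate (projection) Beck--Chevalley condition. Take a weak comprehension $\cmp{\psi}:Z\to B\times X$; the internal equality $z:Z\mid\tt\vdash f(\snd\cmp{\psi}(z))\eq{A}g(\fst\cmp{\psi}(z))$ becomes the external equality $f\circ\snd\circ\cmp{\psi}=g\circ\fst\circ\cmp{\psi}$ \emph{by comprehensive diagonals}, so the weak pullback property yields $c:Z\to W$ with $f'\circ c=\fst\circ\cmp{\psi}$; then
\[\D_{\fst}(\psi)=\D_{\fst}\D_{\cmp{\psi}}(\tt_Z)=\D_{f'\circ c}(\tt_Z)=\D_{f'}\D_c(\tt_Z)\le\D_{f'}(\tt_W),\]
while the reverse inequality $\D_{f'}(\tt_W)\le\fp{g}\D_f(\tt_X)$ is the easy transpose using $g\circ f'=f\circ p$. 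The same device repairs your meet-preservation computation. Note that this also corrects your accounting of where the hypotheses are used: comprehensive diagonals are needed not only ``to make $\RR$ preserve the equality predicate'' but precisely to externalize internal equalities so that weak pullback universal properties apply; and, as a minor point, monotonicity of $\RR$ uses the existence part of the universal property of weak comprehensions, fullness being what gives the converse implications.
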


For clarity, we recall the construction of the
two natural transformations: For an object $A$ of \ct{C},
$[f:X\to A]$ in \Wsb{\ct{C}}, and $\alpha$ in $P(A)$ it is
\[\LL{[f]}\colon=\D_f(\tt_X)
\qquad\mbox{ and }\qquad
\RR_A(\alpha)=[\cmp{\alpha}].\] 

Note that the conditions $\id{\Wsb{\ct{C}}}\le\RR\circ\LL{}$ and
$\LL{}\circ\RR= \id{P}$ establish that $\RR$ and $\LL{}$ form an
adjoint pair satisfying Frobenius reciprocity. This will be useful to
prove commutativity of $\RR$ and $\Sigma$ for some formulas of $P$.

\begin{rem}
Note that if a \whyper on \ct{C} has comprehensive diagonals and
full weak comprehensions then \ct{C} has weak pullbacks whereas, if
comprehensions are strong, then \ct{C} has pullbacks, see
\cite{MPR}. For this reason we did not assume pullbacks or weak
pullbacks in the formulation of \prx\ref{aggiunzione}.
\end{rem}

\begin{rem}\label{pippopippo}Suppose $P$ is a \whyper on \ct{C}
with full comprehensions and comprehensive diagonals. There is
adjunction situation analogous to the one described in
\prx\ref{aggiunzione} between $P$ and $\Sb{\ct{C}}$,
\ie $\LL{}:\Sb{\ct{C}}\stackrel.\to P$ and
$\RR:P\stackrel.\to\Sb{\ct{C}}$ are such that
%
$\LL{}\circ\RR=\id{P}$ and
$\id{\Sb{\ct{C}}}\le\RR\circ\LL{}$.
\end{rem}
\prx\ref{aggiunzione} and \rmx\ref{pippopippo} together with
\prx\ref{addexteraperdextera} prove the following corollary.

\begin{cor}\label{ddd}
Suppose $P$ is a \whyper on \ct{C} with full weak comprehensions and
comprehensive diagonals, then for every $\alpha$ in $P(X\times Y)$ and
every $\gamma$ and $\beta$ in $P(A)$ it is 
\[\cmp{\B^P_{\fst}(\alpha)}=\B^{\Wsb{\ct{C}}}_{\fst}\cmp{\alpha}\qquad
\cmp{\gamma\Implies^P\beta}=\cmp{\alpha}\Implies^{\Wsb{\ct{C}}}
\cmp{\beta}.\]
Moreover, if comprehensions are strong, it also holds that
\[\cmp{\B^P_{\fst}(\alpha)}=\B^{\Sb{\ct{C}}}_{\fst}\cmp{\alpha}\qquad
\cmp{\gamma\Implies^P\beta}=\cmp{\alpha}\Implies^{\Sb{\ct{C}}}
\cmp{\beta}\]
where superscripts distinguish operations between $P$ and
$\Wsb{\ct{C}}$ and between $P$ and $\Sb{\ct{C}}$.
\end{cor}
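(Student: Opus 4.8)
The plan is to obtain all four identities as instances of \prx\ref{addexteraperdextera}, reading $\RR$ as the relevant right adjoint and translating through the formula $\RR_A(\alpha)=[\cmp{\alpha}]$ recalled after \prx\ref{aggiunzione}.

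First I would verify the hypotheses of \prx\ref{addexteraperdextera} for the pair of \prx\ref{aggiunzione}. That proposition provides 1-arrows $\LL{}:\Wsb{\ct{C}}\ntr P$ and $\RR:P\ntr\Wsb{\ct{C}}$ with $\LL{}\circ\RR=\id{P}$ and $\id{\Wsb{\ct{C}}}\le\RR\circ\LL{}$, so by the definition preceding \prx\ref{addexteraperdextera} the 1-arrow $\RR$ is a right adjoint with left adjoint $\LL{}$. Since moreover $\LL{}\circ\RR=\id{P}$, the note following \prx\ref{addexteraperdextera} shows this adjoint pair satisfies Frobenius reciprocity. Hence both clauses of \prx\ref{addexteraperdextera} are available for $r\colon=\RR$ and $R\colon=\Wsb{\ct{C}}$.

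Next I would spell the two clauses out using $\RR_A(\alpha)=[\cmp{\alpha}]$, so that comprehension arrows (viewed as the elements of $\Wsb{\ct{C}}$ they represent) replace the values of $\RR$. The first clause, that a right adjoint commutes with $\B$, yields for $\alpha$ in $P(X\times Y)$
\[\cmp{\B^P_{\fst}(\alpha)}=\RR_X\B^P_{\fst}(\alpha)=\B^{\Wsb{\ct{C}}}_{\fst}\RR_{X\times Y}(\alpha)=\B^{\Wsb{\ct{C}}}_{\fst}\cmp{\alpha}.\]
The second clause, whose ``if'' direction rests on the Frobenius reciprocity just verified, yields for $\gamma,\beta$ in $P(A)$
\[\cmp{\gamma\Implies^P\beta}=\RR_A(\gamma\Implies^P\beta)=\RR_A(\gamma)\Implies^{\Wsb{\ct{C}}}\RR_A(\beta)=\cmp{\gamma}\Implies^{\Wsb{\ct{C}}}\cmp{\beta}.\]
These are the first pair of stated equalities.

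Finally, for the strong case I would run the identical argument with \rmx\ref{pippopippo} supplying, when comprehensions are strong, an adjoint pair $\LL{}:\Sb{\ct{C}}\ntr P$ and $\RR:P\ntr\Sb{\ct{C}}$ again with $\LL{}\circ\RR=\id{P}$ and $\id{\Sb{\ct{C}}}\le\RR\circ\LL{}$; thus $\RR$ is a right adjoint satisfying Frobenius reciprocity, and \prx\ref{addexteraperdextera} with $R\colon=\Sb{\ct{C}}$ delivers the remaining two identities. I expect no genuine obstacle here once the three cited results are assembled; the only point requiring care is purely notational, namely keeping $\RR_A$ and the comprehension $\cmp{\blank}$ identified throughout and observing that the Frobenius hypothesis needed for the implication halves is furnished automatically by the splitting $\LL{}\circ\RR=\id{P}$.
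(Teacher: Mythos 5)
Your proposal is correct and takes essentially the same approach as the paper, which obtains \crx\ref{ddd} exactly by combining \prx\ref{aggiunzione}, \rmx\ref{pippopippo} and \prx\ref{addexteraperdextera}, with the Frobenius reciprocity needed for the implication clause supplied by the splitting $\LL{}\circ\RR=\id{P}$, just as you observe. The only difference is that you spell out the translation through $\RR_A(\alpha)=\ec{\cmp{\alpha}}$ explicitly, which the paper leaves implicit.
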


Among all doctrines, the subobject doctrines of the form \Sb{\ct{C}}
are characterized by the fact that they satisfy \ruc (see
\cite{JacobsB:catltt}), while variational doctrines of the form
\Wsb{\ct{C}} are characterized by the fact that they satisfy \rc (see
\cite{MPR}). Since we shall refer to this characterization repeatedly
in the special case of elementary existential doctrines, we state it
explicitly in the next theorem. We refer the reader to \cite{MPR} for
a proof.

\begin{thm}\label{car}
Suppose $P:\ct{C}\op\ftr\ISL$ is a \whyper.
\begin{enumerate}\thmitem
\item The doctrine $P$ is equivalent to \Sb{\ct{C}} if and only if $P$
has comprehensive diagonals, full weak comprehensions and 
\ruc holds in $P$.
\item The doctrine $P$ is equivalent to \Wsb{\ct{C}} if and only if
$P$ has comprehensive diagonals, full weak comprehensions and
\rc holds in $P$.
\item If the doctrine $P$ has comprehensive diagonals, full weak
comprehensions, then it is equivalent to \Wsb{\ct{C}} if and only if
the inequality $\id{\Wsb{\ct{C}}}\le\RR\circ\LL{}$ is in fact an
equality.
\end{enumerate}
\end{thm}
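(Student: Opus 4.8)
The plan is to reduce everything to the two adjoint situations already recorded in \prx\ref{aggiunzione} and \rmx\ref{pippopippo}, and to determine exactly when the retraction they furnish is an equivalence. I would begin with (iii), which is the structural core. Under the hypotheses, \prx\ref{aggiunzione} gives $(\LL{},\RR)$ with $\LL{}\circ\RR=\id{P}$ and $\id{\Wsb{\ct{C}}}\le\RR\circ\LL{}$, so $\RR$ is a right adjoint exhibiting $P$ as a retract of $\Wsb{\ct{C}}$ in \EH. Since the $2$-cells of \EH are pointwise inequalities, such an adjoint pair is an adjoint equivalence precisely when the remaining comparison $\id{\Wsb{\ct{C}}}\le\RR\circ\LL{}$ is an equality; in that case $\LL{}$ and $\RR$ are mutually inverse $1$-arrows and $P\simeq\Wsb{\ct{C}}$. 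This is (iii).

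For (ii) I would show that \rc holds in $P$ if and only if $\id{\Wsb{\ct{C}}}=\RR\circ\LL{}$, after which (iii) finishes. Computing $\RR\circ\LL{}$ on a variation $\ec{f:X\to A}$ gives, by the formulas recalled after \prx\ref{aggiunzione}, the class $\ec{\cmp{\D_f(\tt_X)}}$, where $\D_f(\tt_X)$ is $a:A\mid\Exists x:X.f(x)\eq{A}a$. Writing $c=\cmp{\D_f(\tt_X)}:Y\to A$, the inequality $\id{}\le\RR\circ\LL{}$ is already $\ec{f}\le\ec{c}$, so the content is the reverse factorization $\ec{c}\le\ec{f}$. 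The relation $y:Y,x:X\mid f(x)\eq{A}c(y)$ in $P(Y\times X)$ is entire, since $y:Y\mid\tt\vdash\Exists x:X.f(x)\eq{A}c(y)$ by the defining property of $c$; applying \rc produces $h:Y\to X$ with $y:Y\mid\tt\vdash f(h(y))\eq{A}c(y)$, and comprehensive diagonals upgrade this to the equation $f\circ h=c$, witnessing $\ec{c}\le\ec{f}$. Conversely, if $P\simeq\Wsb{\ct{C}}$ then \rc holds: given an entire $\ec{r=\ple{r_1,r_2}:Z\to A\times B}$, entirety means $\id{A}$ factors through $r_1$ via a section $s:A\to Z$, so $f=r_2\circ s:A\to B$ satisfies $r\circ s=\ple{\id{A},f}$, whence $\ec{\ple{\id{A},f}}\le\ec{r}$, which is exactly $a:A\mid\tt\vdash R(a,f(a))$. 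The comprehensive diagonals and full weak comprehensions of $\Wsb{\ct{C}}$ come from \exx\ref{escomp}.

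Part (i) runs along the same lines but with \rmx\ref{pippopippo} in place of \prx\ref{aggiunzione}, strong comprehensions in place of weak, and \ruc in place of \rc. The only new point is that, with strong comprehensions, the comprehension $c=\cmp{\D_m(\tt_S)}:Y\to A$ of a subobject $\ec{m:S\to A}$ is monic and the relation $y:Y,s:S\mid m(s)\eq{A}c(y)$ is not merely entire but \emph{functional}: monicity of $m$ with comprehensive diagonals forces $s\eq{S}s'$ whenever $m(s)\eq{A}c(y)$ and $m(s')\eq{A}c(y)$. Thus \ruc applies and yields $h:Y\to S$ with $m\circ h=c$, giving $\RR\circ\LL{}=\id{\Sb{\ct{C}}}$ and hence $P\simeq\Sb{\ct{C}}$. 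For the converse, \ruc in $\Sb{\ct{C}}$ says that an entire functional subobject of $A\times B$ is the graph of a morphism $A\to B$, which holds because entirety and functionality make the first projection of such a subobject both a cover and a mono, hence an isomorphism in the Heyting category \ct{C} (cf.\ \exx\ref{propex}(a)).

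The main obstacle is the single recurring step of converting a choice principle into a genuine equation of arrows: \rc (resp.\ \ruc) only produces an arrow satisfying the relation \emph{up to the equality predicate} $\eq{A}$, and it is precisely the hypothesis of comprehensive diagonals that licenses passing from $f(h(y))\eq{A}c(y)$ to the strict identity $f\circ h=c$ needed to witness the factorization. Everything else—the retraction itself, the explicit formulas for $\LL{}$ and $\RR$, and the translation of entirety and functionality into sections and monos—is bookkeeping supplied by \prx\ref{aggiunzione}, \rmx\ref{pippopippo}, and \exx\ref{escomp}.
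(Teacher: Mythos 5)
The paper's own ``proof'' of this theorem consists of two citations (Jacobs for (i), the paper \cite{MPR} for (ii) and (iii)), and \prx\ref{aggiunzione} is itself imported from \cite{MPR}; so your proposal has to be measured against those sources rather than against an argument printed here. Your treatment of (ii) and (iii) is correct and is essentially the cited argument: the adjunction of \prx\ref{aggiunzione} reduces everything to whether $\id{\Wsb{\ct{C}}}=\RR\circ\LL{}$, and \rc is exactly what produces the missing factorisation $\ec{\cmp{\D_f(\tt_X)}}\le\ec{f}$, with comprehensive diagonals turning $f(h(y))\eq{A}c(y)$ into the strict equation $f\circ h=c$. One ordering remark: in (iii) you assert that the adjoint pair is an equivalence ``precisely when'' the comparison is an equality, but at that stage you have only justified (equality $\Rightarrow$ equivalence); the nontrivial half of (iii) is that an \emph{arbitrary} equivalence $P\simeq\Wsb{\ct{C}}$ forces the canonical comparison to be an equality, and that you only obtain later, via (ii): equivalence $\Rightarrow$ \rc $\Rightarrow$ equality. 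The material is all present, but (iii) should be concluded after (ii), not before.

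The genuine gap is in (i). Everything you use there---\rmx\ref{pippopippo}, the fact that $\RR_A(\alpha)=\ec{\cmp{\alpha}}$ lands in $\Sb{\ct{C}}$, monicity of $c=\cmp{\D_m(\tt_S)}$, and functionality of $y:Y,s:S\mid m(s)\eq{A}c(y)$---requires $P$ to have full \emph{strong} comprehensions, whereas the stated hypothesis of (i) provides only full \emph{weak} ones. Your phrase ``strong comprehensions in place of weak'' silently strengthens the hypothesis, and this step cannot be filled in: \ruc together with full weak comprehensions and comprehensive diagonals does not yield strong comprehensions. Indeed \ruc holds in \emph{every} doctrine of the form $\Wsb{\ct{C}}$ satisfying the standing assumptions: if $R=\ec{\ple{r_1,r_2}:Z\to A\times B}$ is entire, then $r_1$ has a section $s$; if $R$ is moreover functional then, taking a weak pullback $(W,p,p')$ of $r_1$ with itself, the representative $\ple{r_1p,r_2p,r_2p'}:W\to A\times B\times B$ of $R(a,b)\Land R(a,b')$ must factor through the mono $\id{A}\times\Delta_B$, which forces $r_2p=r_2p'$; applying the weak-pullback property to the pair $(\id{Z},s\circ r_1)$ then gives $r_2=r_2\circ s\circ r_1$, so $R$ is the $\Wsb{\ct{C}}$-graph of $r_2\circ s$.

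So if the weak hypothesis sufficed in (i), then combining (i) with (ii) would make every such $\Wsb{\ct{C}}$ equivalent to $\Sb{\ct{C}}$. This fails already for the paper's central example $\Wsb{\pass}$: an equivalence with $\Sb{\pass}$ would transport strong comprehensions, i.e.\ a monic representative for every weak subobject, and a routine diagonalisation produces an arrow of \pass with no monic representative (take two-point fibres over $(\N,n\mapsto 0)$ whose realizers defeat, one index at a time, every partial recursive fibrewise-constant selector). The cited source in fact proves (i) for comprehensions in the strong sense: Jacobs's subset types are given by an adjunction, so mediating arrows are unique and the comprehension maps are automatically monic. Your argument therefore does prove that version of (i); but as a proof of the statement as printed, the passage from weak to strong comprehensions is a real gap, and the honest conclusion is that the printed hypothesis of (i) must be read as ``full comprehensions'' for your argument---or any argument---to go through.
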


\begin{proof}
See \cite{JacobsB:catltt} for the proof of (i); see \cite{MPR} for
those of (ii) and (iii).
\end{proof}

\prx4.11 in \cite{Maietti-Rosolini16} states that in any \whyper $P$
with comprehension \rc holds if and only if \ruc holds in $\Q{P}$. So
\thx\ref{car} immediately gives the following result.

\begin{cor}\label{car3}
Let $P:\ct{C}\op\ftr\ISL$ be a \whyper with full weak comprehensions
and comprehensive diagonals. The doctrine $P$ is equivalent to
\Wsb{\ct{C}} if and only if the doctrine \Q{P} is equivalent to
\Sb{\ct{Q}_p}.
\end{cor}

Observe that in a \whyper with full weak comprehensions
the validity of \ruc implies that of \auc, as well
as the validity of \rc implies that of \ac. This can be proved
by translating in the internal language of \whypers 
the proofs in \cite{tamc}. Therefore, if \Sb{\ct{C}} is a \whyper
then \auc holds in \Sb{\ct{C}}, and if \Wsb{\ct{C}}
is a \whyper, then \ac holds in \Wsb{\ct{C}}. Moreover the proof in
\prx6.5 in \cite{Maietti-Rosolini16} proves also the following.

\begin{prop}\label{car2}
Let $P:\ct{C}\op\ftr\ISL$ be a \whyper.
\begin{enumerate}\thmitem
\item If \ac holds in $P$, then \auc holds in the quotient completion
\Q{P}.
\item If \auc holds \Q{P} and $P$ has full weak comprehensions, then
\ac holds in $P$.
\end{enumerate}
\end{prop}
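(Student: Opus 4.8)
The plan is to reduce both implications to a single observation: for predicates lying in the descent data, the reindexings and the existential/universal quantifiers of $\Q{P}$ are computed exactly by those of $P$, with quantification over a quotient object $(B,\sigma)$ mirrored by quantification over $B$ in $\ct{C}$. Once this dictionary is in place, a relation $S$ in $\Q{P}((A,\rho)\times(B,\sigma))$ is literally an element of $P(A\times B)$; an arrow $[f]$ into the weak exponential $(B,\sigma)^{(A,\rho)}$ can be represented on a weak exponential $W$ of $B$ with $A$ in $\ct{C}$; and, as recalled for the quotient completion, the evaluation of $\Q{P}$ can be taken to be a weak evaluation $\ev:W\times A\to B$ in $\ct{C}$.

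For (i) I would fix $(A,\rho)$, $(B,\sigma)$ and such an $S$, and simply note that the premise of \auc in $\Q{P}$ — that $S$ is entire and $\sigma$-functional — entails, after discarding functionality, that $S$ is entire as a relation in $P(A\times B)$ over the object $A$. Feeding this into the entailment \ac for $P$ on $A$ produces $\Exists f:W.\Forall a:A. S(a,\ev(f,a))$, whose translation back into $\Q{P}$ is exactly $\Exists [f]:(B,\sigma)^{(A,\rho)}.\Forall [a]. S([a],\ev([f],[a]))$; chaining the two entailments gives \auc in $\Q{P}$. Functionality is not even needed here, so this direction is essentially bookkeeping once the dictionary is fixed.

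For (ii) the idea is to manufacture, out of an entire $R$ in $P(A\times B)$, an entire \emph{and} functional relation in $\Q{P}$ to which \auc applies. Using full weak comprehensions I would take $c=\cmp{R}:C\to A\times B$ with $p=\fst\circ c$ and $q=\snd\circ c$, so that $z:C\mid\tt\vdash R(p(z),q(z))$, equip $C$ with the $P$-equivalence relation $\kappa=\fp{p\times p}(\delta_A)$ (that is, $p(x)\eq{A}p(x')$), and consider the graph-like relation $\tilde R=\fp{\id{A}\times p}(\delta_A)$, namely $p(x)\eq{A}a$, in $\Q{P}((A,\delta_A)\times(C,\kappa))$. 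Functionality of $\tilde R$ with respect to $\kappa$ is immediate from transitivity and symmetry of equality, and entirety rewrites as $a:A\mid\tt\vdash\Exists x:C. p(x)\eq{A}a$, which I would identify, via $\Exists x:C. (p(x)\eq{A}a)=\D^P_{\fst}\D^P_{c}(\tt_C)=\D^P_{\fst}R=\Exists b:B. R(a,b)$, with entirety of $R$. Applying \auc in $\Q{P}$ on $(A,\delta_A)$ and translating back along a weak evaluation $\ev:V\times A\to C$ yields $g$ with $a:A\mid\tt\vdash p(\ev(g,a))\eq{A}a$; transposing $q\circ\ev(g,\blank)$ through weak cartesian closedness to some $\hat f:W$ and substituting $x=\ev(g,a)$ into $R(p(z),q(z))$ then delivers $a:A\mid\tt\vdash R(a,\ev_W(\hat f,a))$, i.e.\ the conclusion of \ac for $R$.

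The two steps I expect to be delicate are exactly where the hypotheses bite. First, entirety of $\tilde R$ requires the \emph{equality} $\D^P_{c}(\tt_C)=R$, not merely the inequality $\D^P_{c}(\tt_C)\le R$ coming from the defining property of comprehension (the latter points the wrong way); this is precisely what full weak comprehension buys, and it is the crux of (ii). Second, the back-translation of the \auc conclusion must pass through weak exponentials and weak evaluations, so I would lean on the independence of the relevant statements from the chosen weak evaluation, see \rmx\ref{nondipendedaeval}, to make the transposition $q\circ\ev(g,\blank)\rightsquigarrow\hat f$ and the final substitution rigorous.
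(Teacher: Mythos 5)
Your part (ii) is essentially sound, and it is the right kind of argument: the paper itself gives no proof of this proposition (it only defers to the proof of Prop.~6.5 of \cite{Maietti-Rosolini16}), and your graph construction --- replacing an entire $R$ by the functional relation $\tilde R(a,x)=(p(x)\eq{A}a)$ valued in the comprehension $C$ of $R$ equipped with $\kappa=\fp{p\times p}(\delta_A)$, with fullness entering exactly through $\D_c(\tt_C)=R$ --- is the standard way to reduce \ac to \auc. One point you should make explicit there: your back-translation uses that $(C,\kappa)^{(A,\delta_A)}$ in $\ct{Q}_P$ can be taken to be $(V,\theta_\kappa)$ with $V$ a weak exponential of $C$ with $A$ in \ct{C} and evaluation represented by a weak evaluation of \ct{C}. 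This is true, but only because the \emph{source} is a diagonal object: reflexivity of $\theta_\kappa(t,t')=\Forall a:A.\Forall a':A.\,a\eq{A}a'\Implies\kappa(\ev(t,a),\ev(t',a'))$ follows from substitution along equality.

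Part (i) has a genuine gap, and it sits precisely at that point. Your dictionary asserts that for \emph{arbitrary} objects $(A,\rho)$, $(B,\sigma)$ of $\ct{Q}_P$ the evaluation of $\Q{P}$ may be taken to be a weak evaluation $\ev:W\times A\to B$ of \ct{C}. The paper recalls this only for $(A,\delta_A)$, $(B,\delta_B)$, and for a genuine quotient source it is false: the candidate relation $\theta(t,t')=\Forall a:A.\Forall a':A.\,\rho(a,a')\Implies\sigma(\ev(t,a),\ev(t',a'))$ is not reflexive (an arbitrary $t:W$ need not respect $\rho$ and $\sigma$), so $(W,\theta)$ is not even an object of $\ct{Q}_P$; the carrier of $(B,\sigma)^{(A,\rho)}$ is a comprehension of $\{t:W\mid\theta(t,t)\}$, not $W$. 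Hence the sentence produced by \ac in $P$, namely $\Exists f:W.\Forall a:A.\,S(a,\ev(f,a))$ with $f$ ranging over all of $W$, is \emph{not} the conclusion of \auc in $\Q{P}$, which quantifies only over the compatible $f$'s; nor can \rmx\ref{nondipendedaeval} bridge the two, since $(W,\theta)$ is not a weak evaluation in $\ct{Q}_P$ at all. This is exactly where the functionality you propose to discard is indispensable: from entirety, functionality and the descent property of $S$ one derives $\Forall a:A.\,S(a,\ev(f,a))\vdash\theta(f,f)$, so the witness given by \ac does determine an element of the function space of $\ct{Q}_P$, and (with Frobenius) the premise of \auc entails $\Exists f:W.\bigl[\theta(f,f)\Land\Forall a:A.\,S(a,\ev(f,a))\bigr]$, which is what maps onto the correct conclusion. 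Had functionality really been dispensable, your argument would prove that \ac itself transfers to $\Q{P}$, which is false --- the paper stresses right after the proposition that the completion only turns \ac into \auc, and concretely \ac holds in \Wsb{\pass} (\lmx\ref{today}) while it fails in $\Sb{\eff}\simeq\Q{\Wsb{\pass}}$, since internal choice in a topos forces booleanness and \eff is not boolean. So (i) must be rewritten so that functionality is used to land the \ac-witness inside $(B,\sigma)^{(A,\rho)}$; as stated, your proof only establishes \auc in $\Q{P}$ on objects with diagonal source.
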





\section{\Aritm doctrines}\label{ads}

The aim of this section is to show that the elementary quotient
completion inherits the validity of Formal Church's Thesis from the
doctrine on which it is performed. To this purpose we first briefly
show some preliminary results concerning primary doctrines 
equipped with a natural numbers object.

Recall \cite{LambekJ:inthoc} that, in a category
\ct{C} with binary products, a
\dfn{parameterized natural number object} (\nno) is an object \NN
together with two arrows $\oo:1\to\NN$ and $\ss:\NN\to\NN$ such that
for every $A$ and $X$ and every pair of arrows $a:A\to X$ and 
$f:X\to X$ there is a unique arrow $k:A\times \NN\to X$ such that the
following diagram
\begin{equation}\label{drc}
\vcenter{\xymatrix{A\ar[rrd]_-{a}\ar[rr]^-{\ple{\id{A},\oo}}&&
A\times \NN\ar[rr]^-{\id{A}\times \ss}\ar[d]^-{k}&&
A\times \NN\ar[d]^-{k}\\
&&X\ar[rr]_-{f}&&X}}
\end{equation}
commutes.

Let $P:\ct{C}\op\larr\ISL$ be a primary doctrine, and suppose that 
$(\NN,\oo,\ss)$ is a \nno in
\ct{C}. We say that the \nno satisfies \dfn{induction in} $P$ when
for every $A$ in \ct{C} and $\phi$ in $P(A\times\NN)$, if
$a:A\vdash\phi(\oo)$ and $a:A,m:\NN\mid\phi(m)\vdash\phi(\ss(m))$,
then also \[a:A,n:\NN\vdash\phi(n).\]

\begin{rem}
There is a weakened version of the notion of \nno when, for pairs
$(a,f)$, the mediating arrow $k$ is not necessarily unique with the
commutation property. There is no point to consider the weak version
here because, if $P:\ct{C}\op\larr\ISL$ is a \whyper with
comprehensive diagonals, and $(\NN,\oo,\ss)$ is a \wnno which
satisfies induction in $P$, then it is a \nno in \ct{C}. To see this,
given arrows $a:1\to A$ and $f:A\to A$, suppose that $k:A\times\NN\to
A$ and $h:A\times\NN\to A$ make the diagram (\ref{drc}) commute. So
\[\vdash k(\oo)\eq{\NN}h(\oo)\quad\mbox{ and }\quad
a:A,n:N\mid k(a,n)\eq{A}h(a,n)\vdash k(a,\ss(n))\eq{A}h(a,\ss(n)).\] 
By induction $a:A,n:\NN\vdash k(a,n)\eq{A}h(a,n)$, and $k=h$ since
diagonals are comprehensive.
\end{rem}

We are interested in studying the behavior of \aritm doctrines with
respect to the notion of elementary quotient completion. Since all our
examples and applications concern elementary doctrines with
comprehensive diagonals, from now on we will consider only this class
of doctrines, and \aritm doctrines within.

\begin{rem}
Induction takes a more familiar form when the doctrine $P$
bears sufficient structure to express it. In case $P$ is a \whyper,
the \nno $(\NN,\oo,\ss)$ satisfies
induction if and only if, for every $A$ in \ct{C}
and $\phi$ in $P(A\times\NN)$,
\[\vdash\Forall a:A.\left[\left[
\phi(\oo)\Land\Forall m:\NN.[\phi(m)\Implies\phi(\ss(m))]\right]
\Implies\Forall n:\NN.\phi(n)\right]\]
\end{rem}

A \whyper $P:\ct{C}\op\larr\ISL$ with a \nno which satisfies
induction is said \dfn{\aritm}.

\begin{prop}\label{comp-to-aritm} 
Suppose $\ct{C}$ has a \nno. If $P:\ct{C}\op\larr\ISL$ is a \whyper
with full weak comprehensions and comprehensive diagonals, then $P$
is \aritm. 
\end{prop}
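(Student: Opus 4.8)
The plan is to verify that the \nno $(\NN,\oo,\ss)$ satisfies induction in $P$. So fix $A$ in \ct{C} and $\phi$ in $P(A\times\NN)$ with $a:A\vdash\phi(a,\oo)$ and $a:A,m:\NN\mid\phi(a,m)\vdash\phi(a,\ss(m))$, and let $\cmp{\phi}:X\to A\times\NN$ be a weak comprehension of $\phi$, so that $x:X\mid\tt\vdash\phi(\cmp{\phi}(x))$ and every arrow landing inside $\phi$ factors through $\cmp{\phi}$. The idea is to internalise the recursion: I would build a base point and a successor map on $X$, feed them to the \nno to obtain an arrow $k:A\times\NN\to X$, and then read off induction from the identity $\cmp{\phi}\circ k=\id{A\times\NN}$.

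First I would produce the base arrow. The base case says that $\ple{\id{A},\oo}:A\to A\times\NN$ satisfies $a:A\mid\tt\vdash\phi(\ple{\id{A},\oo}(a))$, so the universal property of weak comprehension yields $a_0:A\to X$ with $\cmp{\phi}\circ a_0=\ple{\id{A},\oo}$. Next I would produce the successor arrow. Reindexing the inductive step $\phi\le \fp{\id{A}\times\ss}(\phi)$ along $\cmp{\phi}$ and using $\tt\le\fp{\cmp{\phi}}(\phi)$ gives
\[x:X\mid\tt\vdash\phi\bigl((\id{A}\times\ss)(\cmp{\phi}(x))\bigr),\]
that is, the arrow $(\id{A}\times\ss)\circ\cmp{\phi}:X\to A\times\NN$ lands inside $\phi$; weak comprehension then supplies $s:X\to X$ with $\cmp{\phi}\circ s=(\id{A}\times\ss)\circ\cmp{\phi}$.

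Now I would apply the universal property of the \nno to the pair $(a_0,s)$ to get the unique $k:A\times\NN\to X$ with $k\circ\ple{\id{A},\oo}=a_0$ and $k\circ(\id{A}\times\ss)=s\circ k$. The crucial claim is $\cmp{\phi}\circ k=\id{A\times\NN}$, which I would establish by uniqueness in the \nno: both $\cmp{\phi}\circ k$ and $\id{A\times\NN}$ are mediating arrows for the recursion data $\bigl(\ple{\id{A},\oo},\,\id{A}\times\ss\bigr)$ with target $A\times\NN$. Indeed $(\cmp{\phi}\circ k)\circ\ple{\id{A},\oo}=\cmp{\phi}\circ a_0=\ple{\id{A},\oo}$ and $(\cmp{\phi}\circ k)\circ(\id{A}\times\ss)=\cmp{\phi}\circ s\circ k=(\id{A}\times\ss)\circ\cmp{\phi}\circ k$, while $\id{A\times\NN}$ trivially satisfies the same two equations, so they coincide. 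Finally, reindexing $\tt\le\fp{\cmp{\phi}}(\phi)$ along $k$ and using that $\fp{k}$ is a $\Land$-homomorphism (hence preserves $\tt$) yields $\tt\le\fp{k}\fp{\cmp{\phi}}(\phi)=\fp{\cmp{\phi}\circ k}(\phi)=\phi$, \ie $a:A,n:\NN\vdash\phi(a,n)$, which is exactly induction.

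The point to watch is that the comprehension is only \emph{weak}, so $\cmp{\phi}$ need not be monic and the factorizations $a_0$ and $s$ are not uniquely determined. This is harmless: the argument nowhere uses monicity of $\cmp{\phi}$, and the identity $\cmp{\phi}\circ k=\id{A\times\NN}$ is forced not by any property of $\cmp{\phi}$ but by the \emph{uniqueness} of the recursor $k$ coming from the \nno. Thus the essential tools are full weak comprehensions together with the universal property of the \nno, and the expected main obstacle—that weak comprehension supplies only non-unique mediating arrows—dissolves precisely because uniqueness is instead provided on the side of the recursion.
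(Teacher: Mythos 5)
Your proof is correct and takes essentially the same route as the paper's: comprehend $\phi$ by $\cmp{\phi}:X\to A\times\NN$, extract from the base case and the inductive step the recursion data $a_0:A\to X$ and $s:X\to X$ over $\ple{\id{A},\oo}$ and $\id{A}\times\ss$, and use the uniqueness clause of the \nno to force $\cmp{\phi}\circ k=\id{A\times\NN}$, so that $\cmp{\phi}$ acquires a section. The only (minor) divergence is the last step, where the paper invokes fullness of comprehensions while you instead reindex $\tt\le\fp{\cmp{\phi}}(\phi)$ along the section $k$ and use functoriality of $P$ --- an equally valid, slightly more economical finish.
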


\begin{proof}
Suppose that $\phi$ in $P(A\times \NN)$ is such that
$a:A\vdash\phi(\oo)$ and
$a:A,m:\NN\mid\phi(m)\vdash\phi(\ss(m))$. Consider a weak
comprehension $\cmp{\phi}:X\to A\times\NN$ of $\phi$. By the property
of weak comprehension, the condition $a:A\vdash\phi(\oo)$ implies that
$\id{A}$ factors through the weak pullback of $\cmp{\phi}$ along
$\ple{\id{A},\oo}:A\to\NN$, while the condition
$a:A,m:\NN\mid\phi(m)\vdash\phi(\ss(m))$ implies that $\cmp{\phi}$
factors through the weak pullback of $\cmp{\phi}$ along
$\id{A}\times\ss:A\times\NN\to A\times\NN$. The resulting commutative
diagram is 
\[\xymatrix{
1\ar[rd]_-{\ple{\id{A},\oo}}\ar[r]&
X\ar[rr]\ar[d]^-{\cmp{\phi}}&&X\ar[d]^-{\cmp{\phi}}\\
&A\times\NN\ar[rr]_-{\id{A}\times \ss}&&A\times\NN
}\]
The universal property of \NN, gives a section of
$\cmp{\phi}$. Fullness of comprehensions completes the proof. 
\end{proof}

\begin{exa}\label{exsko}
Suppose that \ct{C} has a \nno. If \Sb{\ct{C}} is a \whyper, then
it is also \aritm. If \Wsb{\ct{C}} is a \whyper, then is also \aritm. 
\end{exa}

\begin{lem}\label{eqc-aritm0} If $P$ is an elementary doctrine
with comprehensive diagonals, then \ct{C} has a \nno if and only if
$\ct{Q}_P$ has a \nno. 
\end{lem}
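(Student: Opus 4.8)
The plan is to work throughout with the comparison functor $J\colon\ct{C}\to\ct{Q}_P$ that sends $A$ to $(A,\delta_A)$ and $f$ to $\ec f$. Comprehensive diagonals make $J$ faithful — two parallel arrows agreeing on $\delta_Y$ are equal — and it is plainly full, preserves finite products, and sends the terminal object to $(1,\delta_1)$, which is terminal in $\ct{Q}_P$. Two structural facts drive everything. First, each object $(A,\rho)$ is a quotient of $J(A)=(A,\delta_A)$ through the canonical quotient $\ec{\id{A}}\colon(A,\delta_A)\to(A,\rho)$, which is effective and stable under pullback (so taking its product with an identity again yields a quotient). Second, the objects in the image of $J$ are projective, closed under finite products since $J(A)\times J(B)=J(A\times B)$, and they cover every object by the quotients just described.

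For the implication from $\ct{C}$ to $\ct{Q}_P$ I would take $(\NN,\delta_\NN)$, with $\ec\oo\colon(1,\delta_1)\to(\NN,\delta_\NN)$ and $\ec\ss$, as the candidate \nno. When both parameter and target lie in the image of $J$ the universal property is immediate: given $\ec a\colon J(A)\to J(X)$ and $\ec f\colon J(X)\to J(X)$ the \nno of $\ct{C}$ yields a recursor $k\colon A\times\NN\to X$, and $\ec k$ works — existence is automatic because any $\ct{C}$-arrow out of a $J$-object respects $\delta$, and uniqueness follows from uniqueness of $k$ in $\ct{C}$ via faithfulness of $J$ (the $\ct{Q}_P$-equations transport to on-the-nose $\ct{C}$-equations). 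An arbitrary parameter $(A,\rho)$ is then reached by descent: for representatives $a,f$ one forms $k$ and regards $\ec k\colon J(A\times\NN)\to(X,\tau)$; descending it along the stable quotient $\ec{\id{A}}\times\mathrm{id}\colon J(A)\times(\NN,\delta_\NN)\to(A,\rho)\times(\NN,\delta_\NN)$ amounts to $\ec k$ coequalising the kernel pair of that quotient, and since the kernel pair of $\ec{\id{A}}$ is effectively $\rho$, covering it by a projective and cancelling the resulting epimorphism reduces the claim to uniqueness of recursors over a projective parameter. The recursor equations for the descended arrow and its own uniqueness are obtained the same way, by precomposing with these epimorphic covers.

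The step that does \emph{not} reduce to the projective--projective case, and which I expect to be the main obstacle, is uniqueness of the recursor into a general target $(X,\tau)$ over a projective parameter: that two arrows $\ec k,\ec{k'}\colon J(A\times\NN)\to(X,\tau)$ satisfying the recursor equations coincide, i.e. $a\colon A,n\colon\NN\mid\tt\vdash\tau(k(a,n),k'(a,n))$. This is genuinely an induction along $\NN$ — its base is the equivariance of $\ec a$ and its inductive step the equivariance of $\ec f$ — and it cannot be read off directly, precisely because $\tau$ is not an equality predicate. My plan is to lift $k,k'$ through the cover $\ec{\id{X}}\colon J(X)\to(X,\tau)$ and use effectiveness of that quotient to restate the claim as: the $J$-image map $J(\ple{k,k'})\colon J(A\times\NN)\to J(X\times X)$ factors through the kernel-pair mono of $\ec{\id{X}}$. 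Factoring through a subobject is a property stable under the recursion — it holds at $\oo$ by equivariance of $\ec a$ and is preserved by $\ss$ using equivariance of $\ec f$ and congruence of $\tau$ — so it follows from the subobject form of $\NN$-induction, which is exactly what the uniqueness half of the \nno of $\ct{C}$, transported through $J$ and combined with comprehensive diagonals, supplies. Carrying this reduction out while keeping careful track of which equalities hold on the nose and which only up to $\tau$ is the technical heart of the argument.

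For the converse, assume $\ct{Q}_P$ has a \nno $((N,\nu),z,s)$. I would cover it by $J(N)$, lift $z$ and $s$ through this cover using projectivity of $J(1)$ and $J(N)$ to obtain $z_0\colon 1\to N$ and $s_0\colon N\to N$ in $\ct{C}$, and then apply the \nno's own universal property over the terminal parameter, with target $J(N)$, to produce a section of the cover; uniqueness of recursors forces the cover to be split epi, exhibiting $(N,\nu)$ as a retract of $J(N)$ and hence as a projective. Splitting the resulting idempotent — which by fullness and faithfulness is $J$ of an idempotent on $N$ — identifies $(N,\nu)$ with some $J(N_0)$, and transporting the universal property back along the full, faithful, product-preserving $J$ shows that $N_0$ carries a \nno in $\ct{C}$. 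This direction is essentially formal, the only delicate point being the idempotent splitting, which is absorbed by taking $N_0$ to be the underlying object of the retract.
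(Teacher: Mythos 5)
Your opening moves coincide with the paper's: both proofs take $(\NN,\delta_\NN)$ with $\ec\oo$, $\ec\ss$ as the candidate \nno and shuttle between \ct{C} and $\ct{Q}_P$ through the full and faithful functor $J$, and your treatment of the case where parameter and target both lie in the image of $J$ is exactly the paper's use of \lmx5.7 of \cite{MaiettiME:quofcm}. The divergence is in how you verify the universal property for general objects $(A,\rho)$, $(X,\tau)$, and there your argument has a genuine gap: it runs the standard proof that an exact completion $\ct{C}\exl$ inherits a \nno, which depends on relations being spans of actual objects of \ct{C}, in a setting where they are not. Under the hypotheses of \lmx\ref{eqc-aritm0} the doctrine $P$ has no (weak) comprehensions, relations are merely elements of fibres, and $\ct{Q}_P$ need not have pullbacks at all: the kernel pair of $\ec{\id{A}}:(A,\delta_A)\to(A,\rho)$ on which your descent step relies need not exist, since the evident candidate relation on $A\times A$, namely $\rho(u,u')\Land\rho(v,v')\Land\rho(u,v)$, is not reflexive and so determines no object of $\ct{Q}_P$. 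Consequently the three moves your proof pivots on --- descent expressed as coequalising a kernel pair, ``covering the kernel pair by a projective'', and ``$J(\ple{k,k'})$ factors through the kernel-pair mono of $\ec{\id{X}}$'' --- have no referent. Effectiveness of quotients in this framework is a statement about predicates, and the descent condition for $\ec{k}$ is literally the entailment $\rho(u,v)\vdash\tau(k(u,n),k(v,n))$ in $P$, which is the very thing to be proved; once the kernel-pair object is taken away, your reduction is circular. The same objection hits the ``subobject form of induction'' you extract from uniqueness of recursors: that split-mono argument needs the invariant to be presented as an object mapping to $A\times\NN$, i.e.\ again a comprehension. Your method is exactly the right one under the hypotheses of \prx\ref{comp-to-aritm} (full weak comprehensions), where the paper argues in precisely this style; under those of \lmx\ref{eqc-aritm0} it is unavailable, and the paper's own proof instead verifies the universal property of $((\NN,\delta_\NN),\ec\oo,\ec\ss)$ directly on representatives, fibrewise in $P$.

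Your converse direction has a second gap. You need the idempotent $e$ on $N$ with $J(e)=hq$ to split in \ct{C}, but idempotents need not split in a category with finite products, and the proposed remedy --- ``taking $N_0$ to be the underlying object of the retract'' --- conflates the retract, which is the $\ct{Q}_P$-object $(N,\nu)$, with an object of \ct{C}: $(N,\nu)$ is isomorphic to no $J(N_0)$ unless $e$ splits. Indeed, carrying your construction through only yields recursors in \ct{C} that are unique up to precomposition with $\id{A}\times e$, so the splitting cannot be absorbed. The paper sidesteps this entirely: given a \nno $((\NN,\rho),\ec\oo,\ec\ss)$ in $\ct{Q}_P$, it transfers the structure to $(\NN,\delta_\NN)$ by precomposing with the quotient $\ec{\id{\NN}}:(\NN,\delta_\NN)\to(\NN,\rho)$, so that the resulting \nno lies in the image of $J$ by construction, and then reads the universal property back into \ct{C} by full faithfulness; no projectivity, no idempotent splitting and no kernel pairs are invoked anywhere.
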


\begin{proof} 
We shall employ \lmx5.7 in \cite{MaiettiME:quofcm} and see \ct{C} as
the full subcategory of $\ct{Q}_P$ on the objects of the form
$(A,\delta_A)$. Suppose $((\NN,\rho),[\oo],[\ss])$
is a \nno in $\ct{Q}_P$. The arrow $[\id{\NN}]:(\NN,\delta_\NN)\to (\NN,\rho)$
makes $((\NN,\delta_\NN), [\oo], [\ss])$ a \nno in $\ct{Q}_P$.
Conversely, suppose
$(\NN,\oo,\ss)$ is a \nno in
\ct{C}. Then it is easy to check that
$((\NN,\delta_\NN), [\oo],[\ss])$
is a \nno in $\ct{Q}_P$.
\end{proof}

\begin{cor}\label{eqc-aritm} Suppose $P$ is an elementary doctrine
with comprehensive diagonals. The doctrine $P$ is \aritm if and only
if the doctrine \Q{P} is \aritm. 
\end{cor}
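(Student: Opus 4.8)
The plan is to unpack the predicate ``arithmetic'' into its three constituent demands and transfer each separately. By definition a doctrine is arithmetic precisely when it is a \whyper, its base has a \nno, and that \nno satisfies induction. The transfer of the first demand---that $P$ is a \whyper if and only if $\Q{P}$ is---is part of the general theory of the elementary quotient completion developed in \cite{MaiettiME:eleqc,Maietti-Rosolini16}; the weak cartesian closure of $\ct{Q}_P$ together with the explicit description of its weak evaluation is already recalled in the bullet list closing Section~\ref{rcp}. The transfer of the second demand is exactly \lmx\ref{eqc-aritm0}, whose proof moreover shows that a \nno in $\ct{Q}_P$ may be taken to be the embedded object $((\NN,\delta_\NN),[\oo],[\ss])$ with $\NN$ the \nno of \ct{C}. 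Since the \nno is unique up to isomorphism, validity of induction does not depend on this choice, so it remains only to transfer induction between $P$ and $\Q{P}$ using this specific \nno.

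For the induction transfer I would first record the two facts about fibres that make the argument run on representatives. Viewing \ct{C} as the full subcategory of $\ct{Q}_P$ on the objects $(A,\delta_A)$, the descent condition for $\delta_A$ is automatic by substitutivity of equality, so $\Q{P}(A,\delta_A)=\des{\delta_A}=P(A)$; more generally $\Q{P}(A,\rho)=\des{\rho}$ is a sub-$\Land$-semilattice of $P(A)$ whose order, top element and reindexing maps are all inherited from $P$ and are computed on representatives. Moreover the product $(A,\delta_A)\times(\NN,\delta_\NN)$ is again embedded, namely $(A\times\NN,\delta_{A\times\NN})$, so that $\Q{P}$ over it is all of $P(A\times\NN)$.

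With this in place both implications are direct. For the ``only if'' direction, given $(A,\rho)$ and $\phi$ in $\Q{P}((A,\rho)\times(\NN,\delta_\NN))=\des{\tau}$, where $\tau$ is the product relation, I would read $\phi$ as an element of $P(A\times\NN)$. Because reindexing along $[\oo]$ and $[\id{A}\times\ss]$ is computed along the representatives $\oo$ and $\id{A}\times\ss$, and the order of $\des{\tau}$ is that of $P$, the two induction hypotheses in $\Q{P}$ coincide with the hypotheses of induction in $P$ for $\phi$. Induction in $P$ then yields $\tt\leq\phi$ in $P(A\times\NN)$, and since $\tt$ is also the top of $\des{\tau}$ this inequality holds in $\des{\tau}$, which is the desired conclusion in $\Q{P}$. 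The ``if'' direction is the same argument restricted to embedded objects: given $\phi$ in $P(A\times\NN)$ satisfying the $P$-hypotheses, regard it as an element of $\Q{P}(A\times\NN,\delta_{A\times\NN})$, observe that its hypotheses and conclusion match those in $\Q{P}$ over the embedded \nno, and invoke induction in $\Q{P}$.

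The only real work is the bookkeeping of the second paragraph---checking that products of embedded objects are embedded, that reindexing in $\ct{Q}_P$ restricts to reindexing on representatives, and that the top of $\des{\tau}$ is the ambient top---together with the cited fact that the \whyper structure itself survives the completion in both directions; neither presents a genuine obstacle, the substance of the corollary being the clean subposet embedding $\des{\rho}\hookrightarrow P(A)$.
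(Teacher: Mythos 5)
Your proposal is correct and follows essentially the same route as the paper, whose proof simply cites \lmx\ref{eqc-aritm0} for the transfer of the \nno together with \lmx5.7 of \cite{MaiettiME:quofcm}, which is precisely the identification of $P$ with the restriction of \Q{P} to the objects $(A,\delta_A)$ that your fibre-bookkeeping (descent data as sub-semilattices, reindexing on representatives, products of embedded objects being embedded) spells out by hand. Your explicit treatment of the induction transfer over an arbitrary $(A,\rho)$ and your appeal to the known two-way transfer of the \whyper structure are exactly what the paper's ``immediate'' compresses.
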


\begin{proof} Immediate from \lmx5.7 in \cite{MaiettiME:quofcm} and
\lmx\ref{eqc-aritm0}.
\end{proof}

Let $P$ be an \aritm \whyper and let $W$ be a weak exponential of
$\NN$ over $\NN$ with weak evaluation $\ev:W\times\NN\to\NN$.
One can develop standard recursion theory as the
operations of sum and product of pair of natural numbers can be
introduced using weak exponentials and the \nno structure. So one can
introduce the standard Kleene primitive recursive arrows for test and output
$\TU:\NN\times\NN\times\NN\to\NN$ and $\UU:\NN\to\NN$.

For the rest of the section, $P$ is assumed to be an \aritm \whyper
on $\ct{C}$. So in particular $\ct{C}$ is weakly cartesian closed.

\begin{nota}
For $R$ is in $P(\NN\times\NN\times A)$, write 
$\kleene{R}(e,x,y,a)$ in $P(\NN\times\NN\times\NN\times A)$ for the
formula
\[\TU(e,x,y)\eq{\NN}\ss(\oo)\Land R(x,\UU(y),a).\]
Write \kleene{\ev} in $P(\NN\times\NN\times\NN\times W)$ for
\kleene{S} where $S(x,n,f)$ is
\[n\eq{\NN}\ev(f,x).\] 
And write $\ttch_\ev(f)$ in $P(W)$ for the formula
\[\Exists e:\NN.\Forall x:\NN.\Exists y:\NN.
\kleene{\ev}(e,x,y,f)\]
\end{nota}

\begin{lem}\label{CTnotdepends} In every \aritm doctrine, if $W$ and
$W'$ are weak exponentials of \NN with \NN, with
corresponding weak evaluations $\ev:W\times\NN\to\NN$ and
$\ev':W'\times\NN\to\NN$, then
\[\vdash\Forall f:W.\ttch_w(f)\Iff\Forall g:W'.\ttch_{w'}(g).\] 
\end{lem}

\begin{proof}Immediate consequence of
\rmx\ref{nondipendedaeval}.\end{proof}

By \lmx\ref{CTnotdepends}, we can discard the index in $\ttch_\ev$ and
simply write $\ttch$.

\begin{defi}\label{churchThesis}
Let $P:\ct{C}\op\larr\ISL$ be an \aritm doctrine with comprehensive
diagonals. We say that
\begin{enumerate}
\item the \dfn{$($formal$)$ Type-theoretic Church's Thesis}
holds in $P$ if for some weak evaluation
$ev:\NN\times W\to\NN$ it is
\[\vdash \Forall f:W. \ttch_\ev(f);\]
\item the \dfn{$($formal$)$ Church's Thesis} holds in $P$
if, for every $R$ in $P(\NN\times\NN)$,
\[\vdash\Forall x:\NN.\Exists n:\NN.R(x,n)\Implies
\Exists e:\NN.\Forall x:\NN.\Exists y:\NN.\kleene{R}(e,x,y)\]
\end{enumerate}
\end{defi}

\begin{rem}
Note that, by \lmx\ref{CTnotdepends}, any evaluation can be chosen in
the formula $\Forall f:W. \ttch_\ev(f)$. Because of that, we shall
refer to such a sentence as $(\wttct)$. On the other hand, Church's
Thesis is a schema of formulas $\fct_R$ as $R$ varies in
$P(\NN\times\NN)$; and we may abbreviate the statement that Church's
Thesis holds in $P$ by writing that $(\fct)$ holds in $P$.
\end{rem}

\begin{prop}\label{CT}
Suppose $P:\ct{C}\op\larr\ISL$ is an \aritm doctrine with
comprehensive diagonals. The following hold:
\begin{enumerate}\thmitem
\item
The schema $(\fct)$ holds in $P$ if and only if the schema $(\fct)$
holds in \Q{P}.
\item
The sentence $(\wttct)$ holds in $P$ if and only if the sentence
$(\ttct)$ holds in \Q{P}.
\end{enumerate}
\end{prop}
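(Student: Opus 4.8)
The plan is to reduce both statements to the transfer principle for arithmetic already established in \crx\ref{eqc-aritm}, together with a careful comparison of the relevant sentences as computed in $P$ and in $\Q{P}$. The crucial structural fact is that, when $P$ has comprehensive diagonals, the original category \ct{C} embeds into $\ct{Q}_P$ as the full subcategory on objects of the form $(A,\delta_A)$, and on these objects the doctrine $\Q{P}$ restricts to $P$ itself (since $\Q{P}(A,\delta_A)=\des{\delta_A}$ is all of $P(A)$, the descent condition being vacuous for the equality relation). By \lmx\ref{eqc-aritm0} the chosen \nno of \ct{C} serves as a \nno of $\ct{Q}_P$ in the form $(\NN,\delta_\NN)$, so the numerals $[\oo],[\ss]$ and all the derived Kleene arrows $\TU,\UU$ are represented by the very same arrows of \ct{C}. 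I would emphasise that therefore the formula $\kleene{R}(e,x,y,a)$, for $R$ living in $P(\NN\times\NN)$, is \emph{literally the same object} whether read in $P$ or in $\Q{P}$, because all the data entering its definition lie in the image of the embedding.

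For part (i), I would argue as follows. Fix $R$ in $P(\NN\times\NN)$. Both the hypothesis $\Forall x:\NN.\Exists n:\NN.R(x,n)$ and the conclusion $\Exists e:\NN.\Forall x:\NN.\Exists y:\NN.\kleene{R}(e,x,y)$ are sentences built from $R$ using only the \whyper structure along projections of powers of $\NN$ and the diagonals on $\NN$. Since the full embedding $\ct{C}\hookrightarrow\ct{Q}_P$ preserves the \nno, preserves finite products, and—because it is a full sub-doctrine inclusion on objects with equality relations as equality predicates—commutes with the quantifiers $\D_{\fst},\B_{\fst}$ along these particular projections and with Heyting implication on these fibres, the entire sentence $\fct_R$ is preserved and reflected by the embedding. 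Hence $\vdash\fct_R$ holds in $P$ if and only if it holds in $\Q{P}$. Ranging over all $R$ in $P(\NN\times\NN)$ gives one direction; for the converse I must also check that every $R'$ in $\Q{P}(\NN\times\NN)=\des{\delta_\NN}=P(\NN\times\NN)$ arises this way, which it does precisely because the two fibres coincide. Thus the schema $(\fct)$ holds in $P$ exactly when it holds in $\Q{P}$.

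Part (ii) follows the same template applied to the single sentence $(\wttct)$, namely $\Forall f:W.\ttch_\ev(f)$, but here the main obstacle appears: the weak exponential $W$ of $\NN$ with $\NN$ in \ct{C} need not be (the first component of) the weak exponential $(\NN,\delta_\NN)^{(\NN,\delta_\NN)}$ computed in $\ct{Q}_P$. The recap tells us that the latter is $(W,\theta)$ for the \emph{same} carrier $W$ but equipped with the extensionality relation $\theta$, so $\Q{P}(W,\theta)=\des{\theta}$ is a \emph{sub}-poset of $P(W)$, and the outer quantifier $\Forall f:W$ is computed along $(W,\theta)\to(1,\delta_1)$ rather than along $W\to 1$. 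The key point to verify is that $\ttch_\ev(f)$, as an element of $P(W)$, already lies in $\des{\theta}$—that is, it is invariant under the relation $\theta$, which identifies weakly equal functions—and this is exactly the content of \lmx\ref{CTnotdepends}/\rmx\ref{nondipendedaeval}: $\ttch_\ev$ depends only on the input-output behaviour of $f$, so $\theta(f,f')$ forces $\ttch_\ev(f)\vdashv\ttch_\ev(f')$. Once this descent property is in hand, the universal quantification $\B_{\fst}$ along $(W,\theta)\to(1,\delta_1)$ in $\Q{P}$ agrees with $\B_{\fst}$ along $W\to 1$ in $P$ applied to $\ttch_\ev$, and I conclude that $(\wttct)$ in $P$ is equivalent to $(\ttct)$ in $\Q{P}$. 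I expect reconciling these two exponentials, and checking the descent-invariance of $\ttch_\ev$, to be where essentially all the real work lies; the rest is bookkeeping about preservation of logical structure by the canonical embedding.
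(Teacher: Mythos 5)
Your proposal is correct and takes essentially the same route as the paper's proof: both arguments rest on the transfer of the \nno via \lmx\ref{eqc-aritm0}/\crx\ref{eqc-aritm}, on the identification of the fibres of \Q{P} over finite powers of $(\NN,\delta_\NN)$ with those of $P$ over the corresponding powers of \NN, and on the fact that the weak exponential in $\ct{Q}_P$ has the same carrier $W$ but fibre $\des{\theta}\subseteq P(W)$, on which the logical operations of \Q{P} are the restrictions of those of $P$. The only cosmetic difference is that the descent-invariance of $\ttch_\ev$ with respect to $\theta$, which you single out as the main work and verify via \rmx\ref{nondipendedaeval}, is in the paper subsumed by the general fact that the operations of \Q{P}, being restrictions of those of $P$ to descent data, automatically land in $\des{\theta}$; your explicit check is sound but not strictly needed.
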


\begin{proof}
By \lmx\ref{eqc-aritm}, $(\NN,\oo,\ss)$ is a \nno in \ct{C} if and
only if $((\NN,\delta_\NN), [\oo],[\ss])$ is a \nno in $\ct{Q}_P$.

\noindent(i) The claim is proved since, for a
fixed $R$ in $P(\NN\times\NN)$ the formula $\fct_R$ in it is built
using only quantifications, finite conjunctions and the
equality predicate over $\NN$, and these operations of $\Q{P}$ over
any finite power of $(\NN,\delta_\NN)$ are the restriction of those of
$P$ over the corresponding finite power of $\NN$.\\ 
\noindent(ii) The ($\Leftarrow$) direction follows from \lmx5.7 in
\cite{MaiettiME:quofcm}: 
$P$ is equivalent to the restriction of \Q{P} to the
subcategory of $\ct{Q}_P$ on objects of the form $(A,\delta_A)$.
For the other direction, note that by \prx6.7 in
\cite{MaiettiME:quofcm} an arrow $w:\NN\times W\to\NN$ is a weak
evaluation in \ct{C} if and only 
if $[w]:(\NN,\delta_\NN)\times(W,\theta)\to (\NN,\delta_\NN)$ is an
evaluation map in $\ct{Q}_P$ where $\theta$ is an appropriate
$P$-equivalence relation over $W$. The claim is proved since the
formula $\wttct$ is built using only the universal quantification,
finite conjunctions and the equality predicate over $(\NN,\delta_\NN)$
and $(W,\theta)$, and these operations of $\Q{P}$ are the restriction
of those of $P$ as $\Q{P}(W,\theta)\subseteq P(W)$. 
\end{proof}

\begin{cor}\label{completions-CT}
Suppose \ct{C} is such that \Wsb{\ct{C}} is \aritm. 
\begin{enumerate}\thmitem
\item
The schema $(\fct)$ holds in \Wsb{\ct{C}} if and only if the schema
$(\fct)$ holds in \Sb{\ct{C}\exl}.
\item
The sentence $(\wttct$) holds in \Wsb{\ct{C}} if and only if the
schema $(\ttct)$ holds in \Sb{\ct{C}\exl}.
\end{enumerate}
\end{cor}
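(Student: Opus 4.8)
The plan is to read this corollary as the specialization of \prx\ref{CT} to the doctrine $P=\Wsb{\ct{C}}$, transported along the equivalence $\Q{\Wsb{\ct{C}}}\simeq\Sb{\ct{C}\exl}$ recalled in Section~\ref{rcp}. So the first task is to check that $\Wsb{\ct{C}}$ satisfies the hypotheses of \prx\ref{CT}, namely that it is an \aritm doctrine with comprehensive diagonals. Arithmeticity is assumed outright. For comprehensive diagonals I would give the short direct verification, using the characterization recalled in Section~\ref{rcp}: in the variation doctrine one has $\tt_X=[\id{X}]$ and $\delta_A=[\Delta_A]$, so the judgement $x:X\mid\tt\vdash f(x)\eq{A}g(x)$ amounts to $[\id{X}]\le\fp{\ple{f,g}}(\delta_A)$, i.e.\ to the weak pullback of $\Delta_A$ along $\ple{f,g}$ admitting a section $s$. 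Composing $s$ with the weak-pullback projection to $A$ produces an arrow $h:X\to A$ with $\Delta_A h=\ple{f,g}$, forcing $f=h=g$. Hence any two parallel arrows identified by $\eq{A}$ coincide, which is exactly the required property.

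With the hypotheses in place, \prx\ref{CT}(i) yields that the schema $(\fct)$ holds in $\Wsb{\ct{C}}$ if and only if it holds in $\Q{\Wsb{\ct{C}}}$, and \prx\ref{CT}(ii) yields that $(\wttct)$ holds in $\Wsb{\ct{C}}$ if and only if $(\ttct)$ holds in $\Q{\Wsb{\ct{C}}}$. It then remains only to replace $\Q{\Wsb{\ct{C}}}$ by $\Sb{\ct{C}\exl}$. By \crx\ref{eqc-aritm} the completion $\Q{\Wsb{\ct{C}}}$ is itself \aritm, so the principles $(\fct)$ and $(\ttct)$ are meaningful in it; and since both are phrased purely in terms of the indexed-poset structure---universal and existential quantifiers, finite conjunctions, the equality predicate over finite powers of $\NN$, and a chosen weak evaluation on $\NN$---their validity is invariant under an equivalence of elementary doctrines. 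Transporting along $\Q{\Wsb{\ct{C}}}\simeq\Sb{\ct{C}\exl}$ therefore carries both biconditionals to $\Sb{\ct{C}\exl}$, which is the assertion of the corollary.

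I expect no serious obstacle, since the substantive work has already been carried out in \prx\ref{CT}; the corollary is genuinely a corollary. The only two points needing care are the minor bookkeeping facts just used: that $\Wsb{\ct{C}}$ has comprehensive diagonals (handled above, the remaining full weak comprehensions being \exx\ref{escomp}), and that the equivalence of doctrines preserves $(\wttct)$ and the schema $(\fct)$. The latter is immediate once one observes that an equivalence sends a chosen weak evaluation on $\NN$ to a weak evaluation on the corresponding object and commutes, up to the comparison isomorphisms, with $\D$, $\B$, $\Land$ and $\delta$, so that the sentence $\Forall f:W.\ttch_\ev(f)$ and each formula $\fct_R$ are carried to their counterparts and back.
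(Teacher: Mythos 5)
Your proposal is correct and takes essentially the same route as the paper: the paper's proof is the one-line observation that the statement follows from \prx\ref{CT} applied to $P=\Wsb{\ct{C}}$ together with \crx\ref{car3}, which (combined with the identification $\ct{Q}_{\Wsb{\ct{C}}}\simeq\ct{C}\exl$ recalled in Section~\ref{rcp}) gives the equivalence $\Q{\Wsb{\ct{C}}}\equiv\Sb{\ct{C}\exl}$ you transport along. Your added verifications---that $\Wsb{\ct{C}}$ has comprehensive diagonals and that $(\fct)$ and $(\wttct)$ are invariant under equivalence of doctrines---are correct bookkeeping that the paper leaves implicit.
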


\begin{proof}
It is a direct consequence of \crx\ref{car3} and \prx\ref{CT}.
\end{proof}

It is well known that if the validity of $(\ttct)$ in a theory
implies the validity of $(\fct)$ in the presence of choice principles
for total relations on natural numbers. 

Some forms of choice are transferred via the elementary quotient
completion under suitable assumptions on the doctrine.

Let $P:\ct{C}\op\larr\Hey$ be a \hyper with comprehensive
diagonals. \prx\ref{car2} says that the elementary
quotient completion necessarily transfers \ac to \auc. Thus \ac is
in general not preserved by the completions discussed so
far. Nevertheless there are some instances of \ac
restricted to specific objects of the domain of the doctrine as in
\dfx\ref{AUC}.

\begin{prop}\label{RCA} 
Let $P:\ct{C}\op\larr\Hey$ be a \whyper with
comprehensive diagonals and let $A$ be an object of \ct{C}. The
doctrine $P$ satisfies \ac on $A$ if and only if the doctrine \Q{P}
satisfies \ac on $(A,\delta_A)$
\end{prop}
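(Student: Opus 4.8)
The plan is to show that the sentence expressing \ac on $A$ in $P$ and the sentence expressing \ac on $(A,\delta_A)$ in \Q{P} are literally the \emph{same} element of $P(1)=\Q{P}(1,\delta_1)$, so that each holds exactly when the other does. Two observations make this work. First, $\des{\delta_X}$ is the whole fibre $P(X)$ for every object $X$ of \ct{C} (the descent condition for $\delta_X$ is mere substitution of equals), so the embedding $X\mapsto(X,\delta_X)$ recalled from \lmx5.7 of \cite{MaiettiME:quofcm} identifies $P(X)$ with $\Q{P}(X,\delta_X)$. Second, on objects of $\ct{Q}_P$ the connectives and quantifiers of \Q{P} are computed as the restrictions of the corresponding operations of $P$, exactly as already used in the proof of \prx\ref{CT}. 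Together with \prx6.7 of \cite{MaiettiME:quofcm}, which says that an arrow $\ev:W\times A\to B$ is a weak evaluation in \ct{C} if and only if its class $\ec{\ev}$ is an evaluation map in $\ct{Q}_P$, these let me match the data on the two sides of the biconditional.

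For the $(\Rightarrow)$ direction I would assume \ac holds on $A$ in $P$ and verify \ac on $(A,\delta_A)$ in \Q{P} for an arbitrary object $(B,\sigma)$ of $\ct{Q}_P$ and an arbitrary relation $\hat R$ in $\Q{P}((A,\delta_A)\times(B,\sigma))$. Writing $(A,\delta_A)\times(B,\sigma)=(A\times B,\tau)$ for the product relation $\tau$, one has $\Q{P}(A\times B,\tau)=\des{\tau}\subseteq P(A\times B)$, so $\hat R$ is in particular a relation in $P(A\times B)$. Taking the weak evaluation $\ev:W\times A\to B$ in \ct{C} (with $W$ a weak exponential of $B$ with $A$) whose class represents the evaluation of $(B,\sigma)^{(A,\delta_A)}$ in $\ct{Q}_P$, I apply the hypothesis to $\hat R$ to get
\[\Forall a:A.\Exists b:B.\hat R(a,b)\vdash\Exists f:W.\Forall a:A.\hat R(a,\ev(f,a))\]
in $P(1)$. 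Reading each quantifier over $B$, $A$ and $W$ in \Q{P} rather than in $P$ changes nothing, by the restriction principle above, and the reindexing along $\ec{\ev}$ is computed on the representative $\ev$; hence the displayed inequality is exactly the instance of \ac on $(A,\delta_A)$ for $(B,\sigma)$ and $\hat R$.

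For the $(\Leftarrow)$ direction I would assume \ac holds on $(A,\delta_A)$ in \Q{P}. Given an object $B$ of \ct{C} and a relation $R$ in $P(A\times B)$, I form $(B,\delta_B)$ in $\ct{Q}_P$; since $\des{\delta_{A\times B}}=P(A\times B)$, the relation $R$ is an element of $\Q{P}((A,\delta_A)\times(B,\delta_B))$. The instance of \ac on $(A,\delta_A)$ for $(B,\delta_B)$ and $R$ is, after the same identification of quantifiers and of $\ec{\ev}$ with $\ev$, precisely the instance of \ac on $A$ for $B$ and $R$, which therefore holds. As $B$ and $R$ were arbitrary, \ac holds on $A$ in $P$.

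The step that really needs care—and the only place where $\sigma$ being nontrivial could cause trouble—is the claim that the existential quantifier in \Q{P} along the projection off $(B,\sigma)$ agrees with $\D^P_B$. This is where I would put the detail: for $\hat R$ in $\des{\tau}$ one checks, using reflexivity of $\sigma$ and Frobenius reciprocity, that $\D^P_B\hat R$ already lies in $\des{\delta_A}=P(A)$ and is the least element of this fibre above $\hat R$, so that it is the value of the \Q{P}-existential; the dual computation, via $\B$, handles the universal quantifiers. Once this restriction principle is in place the two sentences coincide verbatim and the equivalence is immediate.
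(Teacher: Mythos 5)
Your proposal is correct and takes essentially the same route as the paper's own proof: both arguments rest on the identification of $P$ with the restriction of \Q{P} to the objects $(X,\delta_X)$ (Lemma~5.7 of \cite{MaiettiME:quofcm}), on the fact that a weak evaluation $\ev:W\times A\to B$ in \ct{C} represents an evaluation map in $\ct{Q}_P$, and on the quantifiers of \Q{P} being computed as the restrictions of those of $P$ to descent data. The only difference is one of detail, not of method: the paper merely cites these three facts, whereas you additionally verify the quantifier-restriction claim (via Frobenius reciprocity and reflexivity of $\sigma$) and treat arbitrary objects $(B,\sigma)$ of $\ct{Q}_P$ explicitly.
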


\begin{proof}
It follows from \lmx5.7 in \cite{MaiettiME:quofcm} and from the fact
that, if $w:A\times W\to A$ is a weak evaluation in \ct{C}, then
$[w]:(A,\delta_A)\times(W,\rho)\to(A,\delta_A)$ is an evaluation map
in $\ct{Q}_P$, where $\rho$ is a suitable $P$-equivalence
relation. Moreover quantifiers of \Q{P} are those of $P$ and
$\des{\rho}\subseteq P(W)$.
\end{proof}

When $P:\ct{C}\op\larr\ISL$ is \aritm, we say that $P$ satisfies the
\dfn{Countable Axiom of Choice} \rcn when $P$ satisfies the \ac on the
\nno of \ct{C}.

As an immediate corollary of \prx\ref{RCA} and
\lmx\ref{eqc-aritm} we have the following. 
\begin{cor}\label{RCN}
Let $P:\ct{C}\op\larr\ISL$ be an \aritm doctrine. The doctrine $P$
satisfies \rcn if and only if \Q{P} satisfies \rcn.
\end{cor}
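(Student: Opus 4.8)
The plan is to unwind the definition of \rcn and reduce the statement to the two transfer results already in place. By definition $P$ satisfies \rcn precisely when it satisfies \ac on the \nno of \ct{C}, and likewise \Q{P} satisfies \rcn precisely when it satisfies \ac on the \nno of $\ct{Q}_P$. Since an \aritm doctrine is assumed to carry comprehensive diagonals (and is in particular a \whyper), both \lmx\ref{eqc-aritm0} and \prx\ref{RCA} are available, and the whole argument amounts to matching up the two natural numbers objects and quoting \prx\ref{RCA}.

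First I would fix a \nno $(\NN,\oo,\ss)$ in \ct{C}, which exists because $P$ is \aritm. By the proof of \lmx\ref{eqc-aritm0}, the triple $((\NN,\delta_{\NN}),[\oo],[\ss])$ is a \nno in $\ct{Q}_P$, viewing \ct{C} as the full subcategory of $\ct{Q}_P$ on the objects of the form $(A,\delta_A)$. Hence the natural numbers object of $\ct{Q}_P$ can be taken to be exactly $(\NN,\delta_{\NN})$, the image of \NN under that inclusion.

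With this identification, the corollary is just \prx\ref{RCA} specialised to $A=\NN$: that proposition asserts that $P$ satisfies \ac on \NN if and only if \Q{P} satisfies \ac on $(\NN,\delta_{\NN})$. Reading the left-hand condition as \rcn for $P$ and the right-hand condition as \rcn for \Q{P}---legitimate by the preceding paragraph---delivers the claimed equivalence.

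The only point requiring care is the bookkeeping about the two natural numbers objects, namely that the \nno of $\ct{Q}_P$ is $(\NN,\delta_{\NN})$ and not some $(\NN,\rho)$ with a coarser $P$-equivalence relation $\rho$; this is precisely what \lmx\ref{eqc-aritm0} secures, and it also makes clear that the validity of \ac on the \nno is insensitive to the choice of natural numbers object up to isomorphism. Beyond this matching there is no genuine obstacle, which is why the result is an immediate corollary of \prx\ref{RCA} and \lmx\ref{eqc-aritm0}.
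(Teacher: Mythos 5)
Your proposal is correct and follows essentially the same route as the paper, which presents the corollary as an immediate consequence of \prx\ref{RCA} together with the identification of the \nno of $\ct{Q}_P$ as $(\NN,\delta_\NN)$ (the content of \lmx\ref{eqc-aritm0}, packaged in the paper via \crx\ref{eqc-aritm}). Your explicit bookkeeping---that \aritm doctrines carry comprehensive diagonals so \prx\ref{RCA} applies, and that \ac on the \nno is invariant under the isomorphism between any two natural numbers objects---is exactly the unwinding the paper leaves implicit.
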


\begin{prop}\label{choiceCT}
Suppose $P$ is an \aritm doctrine on \ct{C}. If $P$ satisfies both
\rcn and $(\wttct)$, then $P$ satisfies $(\fct)$. 
\end{prop}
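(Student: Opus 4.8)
The plan is to unwind the definitions of $(\fct)$, $(\wttct)$ and \rcn, and then feed the function produced by the Countable Axiom of Choice into the Type-theoretic Church's Thesis. Fix $R$ in $P(\NN\times\NN)$; we must establish the sentence $\fct_R$, and since Heyting implication is right adjoint to meet it suffices to derive
\[\Forall x:\NN.\Exists n:\NN.R(x,n)\vdash
\Exists e:\NN.\Forall x:\NN.\Exists y:\NN.\kleene{R}(e,x,y)\]
in $P(1)$, reasoning with the antecedent as a standing hypothesis.

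First I would apply \rcn, namely \ac on \NN, to the relation $R$ itself. Choosing a weak evaluation $\ev:W\times\NN\to\NN$, this gives
\[\Forall x:\NN.\Exists n:\NN.R(x,n)\vdash
\Exists f:W.\Forall x:\NN.R(x,\ev(f,x)),\]
so from the hypothesis we obtain a function $f$ with $\Forall x:\NN.R(x,\ev(f,x))$. Next, because $(\wttct)$ holds and, by \lmx\ref{CTnotdepends}, it may be expressed with the \emph{same} weak evaluation $\ev$, instantiating $\vdash\Forall f:W.\ttch_\ev(f)$ at this $f$ produces a code $e$ with $\Forall x:\NN.\Exists y:\NN.\kleene{\ev}(e,x,y,f)$; that is, for every $x$ there is $y$ with $\TU(e,x,y)\eq{\NN}\ss(\oo)$ and $\UU(y)\eq{\NN}\ev(f,x)$.

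It remains to glue the two facts. For each $x$ I would take the witness $y$ coming from $(\wttct)$: from $\UU(y)\eq{\NN}\ev(f,x)$ together with $R(x,\ev(f,x))$ the replacement rule for the equality predicate of the elementary structure yields $R(x,\UU(y))$, and conjoining $\TU(e,x,y)\eq{\NN}\ss(\oo)$ gives exactly $\kleene{R}(e,x,y)$. Re-quantifying existentially over $y$ and $e$ and discharging $f$ then delivers $\Exists e:\NN.\Forall x:\NN.\Exists y:\NN.\kleene{R}(e,x,y)$, which is the consequent of $\fct_R$.

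I expect the only delicate points to be bookkeeping rather than substance: one must ensure that the witness $f$ produced by \rcn lives in the \emph{same} weak exponential over which $(\wttct)$ quantifies, which is legitimised by \rmx\ref{nondipendedaeval} and \lmx\ref{CTnotdepends}, and one must check that replacing $\ev(f,x)$ by $\UU(y)$ inside $R$ is a genuine instance of Leibniz replacement in the weak hyperdoctrine. The latter is sound precisely because $R$ is an arbitrary predicate, so no forbidden substitution inside a $\lambda$-abstraction is involved.
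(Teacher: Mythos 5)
Your proof is correct. The paper in fact gives no proof of \prx\ref{choiceCT} at all---it is stated as a well-known fact (cf.\ the sentence preceding it)---and your argument is precisely the standard one it presupposes: apply \rcn to the total relation $R$ to obtain $f:W$, instantiate $(\wttct)$ at that $f$ (legitimately using the same weak evaluation by \lmx\ref{CTnotdepends}), and transfer $R(x,\ev(f,x))$ to $R(x,\UU(y))$ by Leibniz replacement for the equality predicate, which is sound for arbitrary predicates in an elementary doctrine and involves no substitution inside $\lambda$-terms.
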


\section{Elementary quotient completions on partitioned
 assemblies}\label{eqcopa}

We are finally in a position to analyze the realizability model
offered by the effective topos and various doctrines related to
it. For a detailed presentation of the categorical structure of
realizability we refer the reader to \cite{OostenJ:reaait}; here we
restrict ourselves to give just the essential details needed for our
purposes.

Although most of the development could be performed relative to an
arbitrary partial combinatory algebra, we shall refer only to the
partial combinatory algebra which is Kleene's first model \Kl1 on
the natural numbers, with the usual notation \tur{e} for the $e$-th
partial recursive function. We shall write \epl{n,m} for a fixed
recursive encoding of pairs and \fs{k} and \sn{k} for the (unique)
pair of numbers such that $k=\epl{\fs{k},\sn{k}}$.

Recall the category \ass of assemblies and its full
subcategory of partitioned assemblies from \cite{CarboniA:catarp}. An
\dfn{assembly} is a pair $(P,T)$ where $P$ is a set and $T\subseteq
P\times\N$ is a total relation from $P$ to \N, \ie for every element
$x\in P$ there is a number $n\in\N$ such that
$x\mathrel{T}n$.\footnote{The name assembly refers to the way the
relation $T$ ``assembles'' the elements of $P$ within subsets
$T_n\colon=\{x\in P\mid x\mathrel{R}n\}$, possibly overlapping.}
An arrow $f:(P,T)\to(P',T')$ of assemblies is a function $f:P\to P'$
such that for some $t\in\N$
$t$ \dfn{tracks} $f$, \ie
for every $x\in P$ and every $n\in\N$,
if $x\mathrel{T}n$, then $f(x)\mathrel{T'}\tur{t}(n)$. Arrows compose
as functions. The category \ass is a quasitopos, see
\cite{eff}. In particular, a strong subobject of $(P,T)$ in \ass is
represented by an inclusion
$\xymatrix@1@=2ex{\id{P}\rst_{X}:(X,T\cap(X\times\N))\ \ar@{>->}[r]&(P,T)}$
for some (unique) subset $X$ of $P$.
Also, since the terminal assembly $\spe1=(\{0\},\{(0,0)\})$ is a
generator in \ass, the global-section functor
$\Gamma=\hom_{\pass}(\spe1,\blank):\pass\ftr\Set$
is (isomorphic to) the forgetful functor that sends $(P,T)$ to $P$ and
$f$ to itself.

An assembly $(P,T)$ is \dfn{partitioned} if $T$ is
single-valued (hence $T$ is a function from $P$ to \N).\footnote{The
past participle partitioned refers to the fact that the assembled
subsets $T_n$ of $P$ are disjoint, hence form a partition of $P$.}
The full subcategory of \ass on partitioned assemblies is written
\pass.

The category \pass of partitioned assemblies has finite limits,
finite coproducts, weak exponentials, and a \nno, see
\cite{CarboniA:somfcr,OostenJ:reaait}.

The terminal assembly \spe1 is partitioned. 
The product of the two partitioned assemblies $(P,T)$ and $(M,S)$
can be chosen as $(P\times M,T\otimes S)$ where 
$(T\otimes S)(x,y)\colon=\epl{T(x),S(y)}$.
A weak exponential of $(P,T)$ with $(M,S)$ is $(W,V)$ where
\[W\colon=\{(f,t)\in P^M\times\N\mid t\mbox{ tracks }f\}\]
and $V(f,t)\colon=t$; the weak evaluation
$\ev:(W,V)\times(P,T)\to(M,S)$ is given by the function 
$\ev:W\times P\to M$ defined as $\ev((f,t),x)\colon=f(x)$ which is
tracked by a code for the recursive function
$k\mapsto\tur{\fs{k}}(\sn{k})$.
The \nno is determined on the partitioned assembly $(\N,\id{\N})$.

\begin{rem}\label{rcar}
Recall that $\ass\equiv\pass\rgl$ and that the exact
completion $\pass\exl$ is the effective topos \eff, see
\cite{RobinsonE:colcet,CarboniA:somfcr,OostenJ:reaait}. A crucial
point to see this is that every partitioned assembly is projective
with respect to regular epis in \ass \cite{CarboniA:somfcr}.
\end{rem}

Consider the
doctrines on \pass
\[\Wsb{\pass}:\pass\op\ftr\ISL\qquad
\Sb{\pass}:\pass\op\ftr\ISL\qquad
\stm:\pass\op\ftr\ISL,\]
respectively the doctrine of variations, that of subobjects
and that obtained as the composite of $\Gamma$ with the contravariant
powerset functor. Clearly $\stm$ is a boolean \aritm \hyper.

\begin{lem}\label{today}
The doctrine $\Wsb{\pass}:\pass\op\ftr\ISL$ is a \whyper which
satisfies \rc and \ac on each objects and is \aritm.
\end{lem}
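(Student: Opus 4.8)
The plan is to obtain all three assertions from the general theory of doctrines of variations developed above, once the requisite structure on \pass is identified; the only genuinely realizability-theoretic input is the weak local cartesian closure of \pass.

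First I would establish that $\Wsb{\pass}$ is a \whyper by invoking \exx\ref{propex}(b), which reduces the claim to showing that \pass is weakly locally cartesian closed and has finite weak coproducts. The finite coproducts are among the basic properties of \pass already recorded, and in particular they are finite weak coproducts. The substantive point is weak local cartesian closure: for every arrow $f$ of \pass the pullback functor along $f$ must possess a weak right adjoint. This is the standard construction of weak dependent products of partitioned assemblies by means of the combinatory structure of \Kl1, and it is precisely the property that makes the exact completion $\pass\exl$ a topos, as recorded in \rmx\ref{rcar}; I would cite \cite{CarboniA:somfcr,OostenJ:reaait} for the details. This is the main obstacle, being the only step that is not purely formal.

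With $\Wsb{\pass}$ known to be a \whyper, the two choice principles follow from the abstract characterisation of variational doctrines. By \exx\ref{escomp} every doctrine of the form $\Wsb{\ct{C}}$ has full weak comprehensions, and by \thx\ref{car}(ii) such a doctrine necessarily satisfies \rc; taking $\ct{C}=\pass$ yields \rc in $\Wsb{\pass}$, hence on each object. For \ac I would appeal to the observation following \crx\ref{car3} that in a \whyper with full weak comprehensions the validity of \rc entails that of \ac; since $\Wsb{\pass}$ is such a \whyper, \ac holds in it, and therefore on each object.

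Finally, the arithmetic property is immediate from \exx\ref{exsko}: the category \pass carries a \nno, namely the partitioned assembly $(\N,\id{\N})$, and a doctrine of the form $\Wsb{\ct{C}}$ which happens to be a \whyper over a category with a \nno is automatically \aritm. Equivalently one may apply \prx\ref{comp-to-aritm}, using that $\Wsb{\pass}$ has full weak comprehensions together with comprehensive diagonals---the latter holding because two parallel arrows of \pass that are provably equal in the sense of the equality predicate coincide as functions. Collecting the three parts gives the statement.
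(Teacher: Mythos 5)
Your proposal is correct, and for three of the four claims it follows the paper's own route exactly: \rc via \exx\ref{escomp} and \thx\ref{car}(ii), \ac via the remark that \rc implies \ac in a \whyper with full weak comprehensions, and the \aritm property via the \nno $(\N,\id{\N})$ and \exx\ref{exsko}. Where you genuinely diverge is the core step, namely that $\Wsb{\pass}$ is a \whyper at all. The paper observes that, after \thx\ref{car}(ii), the only missing structure is a right adjoint $\B_{\fst}$ along each projection, and it constructs one explicitly: $\B_{\fst}([f])$ is the evident projection out of the partitioned assembly of triples $\ple{x,h,t}$ in which $h$ is a section of $f$ over $\{x\}\times M$ and $t$ is a \Kl1-code tracking $h$. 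You instead invoke \exx\ref{propex}(b) and outsource weak local cartesian closure of \pass to the literature. Both arguments rest on the same realizability content---the weak dependent product you cite is exactly the paper's construction performed along an arbitrary arrow rather than a projection---so the trade-off is self-containedness versus generality. The paper's version is self-contained and shows precisely where the combinatory structure of \Kl1 enters; your version has the small advantage that \exx\ref{propex}(b) delivers the fibrewise Heyting implication (condition (ii) of the definition of \whyper) in one stroke, a point the paper's reduction to right adjoints along projections leaves inside ``the conclusion is now straightforward'', since implication in $\Wsb{\ct{C}}(A)$ is a dependent product along an arbitrary arrow, not along a projection. Two small corrections to your supporting citations: the statement you need is cleanest in \cite{RosoliniG:loccce}, where weak local cartesian closure of \ct{C} is shown equivalent to local cartesian closure of $\ct{C}\exl$ (applied to $\pass\exl\simeq\eff$); and weak local cartesian closure is what makes $\pass\exl$ locally cartesian closed, not by itself a topos, so ``precisely the property that makes the exact completion a topos'' overstates matters, though harmlessly for the direction you actually use.
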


\begin{proof}
After \thx\ref{car}~(ii), we only need to show that
each functor
\[(\Wsb{\pass})_{\fst}:\Wsb\pass(P,T)\ftr\Wsb{\pass}((P,T)\times(M,S))\] has 
a right adjoint. Consider $[f:(Y,Z)\to(P,T)\times(M,S)]$, say that $d$
tracks $f$, let
\[Q\colon=\{\ple{x,h,t}\in P\times Y^M\times \N \mid
t\mbox{ tracks }h\mbox{ and for all }m\in M, f(h(m))=\ple{x,m}
\}\]
and let $R:\ple{x,h,t}\mapsto\epl{T(x),t}:Q\to\N$.
Define $\B_{\fst}([f])\colon=\fst:(Q,R)\to(P,T)$ which is tracked by
a code of the function \fs{(\blank)}. The function
$((x,h,t),m)\mapsto h(m):Q\times M\to Y$ is tracked by a code for the
recursive function $k\mapsto\tur{\sn{(\fs{k})}}(\sn{k})$,
thus producing an arrow $(Q,R)\times(M,S)\to (Y,Z)$ which shows that
$(\Wsb{\pass})_{\fst}(\B_{\fst}([f])\leq[f]$. The conclusion is now
straightforward.
\end{proof}

There are obvious 1-arrows of elementary doctrines 
$\stm\to\Sb{\pass}$ 
and $\Sb{\pass}\to\Wsb{\pass}$ which are
the identity on the domain of the doctrine and monotone inclusions on
the fibres.

\begin{thm}\label{carfur}
The doctrine $\Ssb{\ass}:\ass\op\ftr\ISL$ of strong subobjects on \ass
is the elementary quotient completion of the doctrine
$\stm:\pass\op\ftr\ISL$.
\end{thm}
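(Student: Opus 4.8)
The plan is to exhibit an equivalence of elementary doctrines between $\Q{\stm}$ and $\Ssb{\ass}$ by building an explicit comparison functor on the base categories and checking compatibility with the fibres. First I would unwind $\ct{Q}_{\stm}$ in this case. Since $\stm=\pw\Gamma$ sends $(P,T)$ to the powerset $\pw(\Gamma(P,T))=\pw(P)$ of the underlying set, a $\stm$-equivalence relation $\rho$ on $(P,T)$ is just an ordinary equivalence relation on the set $P$, with the equality predicate $\delta_{(P,T)}$ the set-theoretic diagonal; an arrow $\ec{f}\colon((P,T),\rho)\to((M,S),\sigma)$ is a tracked function $f\colon P\to M$ with $x\rho x'\vdash f(x)\sigma f(x')$, taken up to the relation $f\sim g$ iff $x\rho x'\vdash f(x)\sigma g(x')$; and the fibre $\Q{\stm}((P,T),\rho)=\des{\rho}$ consists of the $\rho$-saturated subsets $\alpha\subseteq P$.

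Next I would define $E\colon\ct{Q}_{\stm}\to\ass$ by sending $((P,T),\rho)$ to the assembly $(P/\rho,T')$, where, because $T$ is single-valued, $T'([x])\colon=\{T(x')\mid x'\rho x\}$ is a total relation, and by sending $\ec{f}$ to the induced map $\bar f\colon[x]\mapsto[f(x)]$. The clause $x\rho x'\vdash f(x)\sigma f(x')$ together with single-valuedness of $T$ and $S$ shows that any tracker $t$ of $f$ in \pass also tracks $\bar f$ in \ass: if $[x]\mathrel{T'}n$, say $n=T(x')$ with $x'\rho x$, then $S(f(x'))=\tur t(n)$ realizes $\bar f([x])=[f(x')]$. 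To see that $E$ is essentially surjective, given an arbitrary assembly $(Q,U)$ I would take the partitioned assembly on the graph $P\colon=\{(q,n)\mid q\mathrel{U}n\}$ with $T(q,n)\colon=n$, and set $(q,n)\rho(q',n')$ iff $q=q'$; then $(P/\rho,T')\cong(Q,U)$.

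The main work, and the step I expect to be the main obstacle, is fullness. Given an arrow $g\colon(P/\rho,T')\to(M/\sigma,S')$ of assemblies tracked by $t$, I must lift it to a tracked function $f\colon P\to M$ respecting the relations with $\bar f=g$. For each $x$ the realizer $\tur t(T(x))$ witnesses membership in $S'(g([x]))$, so there is $m\in M$ with $[m]=g([x])$ and $S(m)=\tur t(T(x))$; choosing such an $m=:f(x)$ for every $x$ (an appeal to choice in \Set, the base of the construction) produces an $f$ tracked by $t$ with $\bar f=g$, and compatibility with $\rho,\sigma$ is automatic. Faithfulness is immediate, since $f\sim g$ forces $f(x)\sigma g(x)$ (take $x'=x$), hence $\bar f=\bar g$, while conversely $\bar f=\bar g$ gives $f\sim g$; the latter also shows the lift above is independent of the choices up to $\sim$. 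Thus $E$ is an equivalence of categories, and it preserves finite products since these are computed on underlying sets.

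Finally I would match the fibres. For each object, the $\rho$-saturated subsets of $P$ correspond bijectively and order-isomorphically (indeed as inf-semilattices, intersections matching intersections) to subsets of $P/\rho$ via $\alpha\mapsto\{[x]\mid x\in\alpha\}$, and by the description of strong subobjects recalled above the latter are exactly $\Ssb{\ass}(P/\rho,T')$. This family of isomorphisms commutes with reindexing, because inverse images along $f$ pass to inverse images along $\bar f$ on saturated subsets, and it carries equality to equality: the descent datum $\rho\in\des{\rho\times\rho}$ corresponds to the diagonal of $P/\rho$, which represents $\delta_{(P/\rho,T')}$. Hence $E$ together with this family is an equivalence of elementary doctrines $\Q{\stm}\simeq\Ssb{\ass}$, as claimed.
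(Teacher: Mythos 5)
Your proposal is correct, but it follows a genuinely different route from the paper's proof. The paper argues abstractly: since \Ssb{\ass} has stable quotients, the universal property of the elementary quotient completion says that the evident 1-arrow of elementary doctrines $c:\stm\to\Ssb{\ass}$ over the inclusion $G:\pass\to\ass$ (whose components are isomorphisms by the characterization of strong subobjects of an assembly) induces a functor $F:\ct{Q}_{\stm}\ftr\ass$ together with the doctrine-level comparison $b$, and it then concludes by citing two facts recorded in \rmx\ref{rcar} from \cite{CarboniA:somfcr}: $F$ is full and faithful because partitioned assemblies are regular projective in \ass, and essentially surjective because \ass has enough regular projectives. You instead build the equivalence by hand: you unwind $\ct{Q}_{\stm}$ concretely, define the quotient-assembly functor $E$, and prove the three properties directly. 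Your fullness argument (lifting a tracked map between quotient assemblies by choosing, for each point, a representative whose realizer is the one computed by the tracker, via choice in \Set) is exactly an inlined proof that partitioned assemblies are regular projective, and your graph construction for essential surjectivity is exactly an inlined proof that \ass has enough regular projectives, so the mathematical content coincides. What the paper's route buys is brevity and automatic coherence: the universal property delivers the functor together with a morphism of doctrines, so no separate verification of the fibre isomorphisms, their naturality, or preservation of equality is needed. What your route buys is self-containedness and transparency: it exposes exactly where choice in the metatheory enters and gives an explicit description of the equivalence, at the price of the hand-checks of the fibrewise order-isomorphisms, their compatibility with reindexing, and the matching of equality predicates---which you do carry out.
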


\begin{proof}
By the universal property of the elementary quotient completion, a
1-arrow of doctrines with stable quotients as in the diagram on the
right
\[\xymatrix@C=4em@R=1em{
{\pass\op}\ar[rd]^(.4){\stm}_(.4){}="P"\ar[dd]_{G\opp}&\\
& {\ISL}.\\
{\ass\op}\ar[ru]_(.4){\Ssb{\ass}}^(.4){}="R"
&\ar"P";"R"_c^{\kern-.4ex\cdot}}
\kern6ex
\xymatrix@C=4em@R=1em{{\ct{Q}_{\stm}\op}
\ar[rd]^(.4){\Q{\stm}}_(.4){}="P"\ar[dd]_{F\opp}&\\
&{\ISL}\\
{\ass\op}\ar[ru]_(.4){\Ssb{\ass}}^(.4){}="R"&\ar"P";"R"_b^{\kern-.4ex\cdot}}
\]
is completely determined by a 1-arrow of elementary doctrines as in
the diagram on the left.
So take $G$ as the inclusion of \pass into \ass, that preserves
all finite limits. The $(P,T)$-component of the transformation $c$
takes a subset $X$ of 
$\Gamma(P,T)$ to the strong subobject $\xymatrix@1@=2ex 
{\id{P}\rst_{X}:(X,T\cap(X\times\N))\ \ar@{>->}[r]&(P,T)}$; it is an
isomorphism because of the characterization of strong subobjects in
\ass. The induced functor $F:\ct{Q}_{\stm}\ftr\ass$
is faithful. It is also full as partitioned assemblies are regular
projective (see  \rmx\ref{rcar}). Finally, $F$ is essential
surjective because \ass has enough regular projectives (see again
\rmx\ref{rcar}).
\end{proof}

\begin{cor}\label{pasmwcc} 
The doctrine $\Ssb{\ass}$ is \aritm.
\end{cor}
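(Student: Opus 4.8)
The plan is to combine \thx\ref{carfur} with the transfer result \crx\ref{eqc-aritm}. By \thx\ref{carfur} the doctrine $\Ssb{\ass}$ is the elementary quotient completion $\Q{\stm}$ of $\stm:\pass\op\ftr\ISL$, and it was already observed that $\stm$ is a boolean \aritm \hyper. To invoke \crx\ref{eqc-aritm} the one thing still to check is that $\stm$ is an elementary doctrine with comprehensive diagonals.

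That verification I would carry out using the characterisation of comprehensive diagonals recalled earlier: an elementary doctrine has comprehensive diagonals precisely when any two parallel arrows $f,g:A\to B$ of its base satisfying $a:A\mid\tt\vdash f(a)\eq{B}g(a)$ are already equal. For $\stm$ the fibre over $B$ is the powerset of the underlying set $\Gamma B$, and the equality predicate $\delta_B$ is the genuine diagonal of $\Gamma B$; hence the hypothesis $a:A\mid\tt\vdash f(a)\eq{B}g(a)$ says exactly that the underlying functions of $f$ and $g$ coincide on all of $\Gamma A$. Since an arrow of \pass is completely determined by its underlying set-function, this forces $f=g$, so $\stm$ has comprehensive diagonals.

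With this in hand the conclusion is immediate: $\stm$ is an elementary doctrine with comprehensive diagonals that is \aritm, so \crx\ref{eqc-aritm} gives that its elementary quotient completion is \aritm, and that completion is $\Ssb{\ass}$ by \thx\ref{carfur}. I expect no genuine obstacle; the only point deserving a word is the comprehensive-diagonals condition, which reduces to the elementary fact that morphisms of partitioned assemblies are faithfully recorded by their underlying set-theoretic functions.
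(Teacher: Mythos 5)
Your proposal is correct and takes essentially the same route as the paper: \thx\ref{carfur} identifies \Ssb{\ass} with the elementary quotient completion of \stm, and the transfer result \crx\ref{eqc-aritm} applied to the \aritm doctrine \stm yields the claim. The only difference is that you explicitly verify that \stm has comprehensive diagonals (correctly, since arrows of \pass are faithfully determined by their underlying set-functions via $\Gamma$), a hypothesis of \crx\ref{eqc-aritm} that the paper's proof leaves implicit.
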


\begin{proof} 
It follows from \thx\ref{carfur} since $\stm$
is \aritm by \lmx\ref{today}. Thus its elementary quotient
completion is \aritm by \lmx\ref{eqc-aritm}. 
\end{proof}

\begin{prop}\label{super} 
The doctrine $\stm$ satisfies $(\wttct)$.
\end{prop}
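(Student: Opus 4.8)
The plan is to read off the classical set-theoretic semantics of the doctrine $\stm$ and to exploit that each point of the weak exponential of $\NN$ with $\NN$ in \pass literally carries a recursive index computing it; that index is precisely the witness $e$ required by $(\wttct)$. Since $\stm$ is the composite of $\Gamma\colon\pass\ftr\Set$ with the contravariant powerset functor $\pw$, each fibre over an object $A$ is the ordinary powerset of $\Gamma A$, the connectives are the boolean ones of $\Set$, and the existential and universal quantifiers along a projection are the direct and dual images between underlying sets. Consequently $\vdash\Forall f:W.\ttch_\ev(f)$ holds in $\stm$ exactly when, in $\Set$, the predicate $\ttch_\ev$ holds of every element of $\Gamma(W)$.

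By \lmx\ref{CTnotdepends} any weak evaluation may be used, so I take the weak exponential $(W,V)$ of \pass with underlying set $W=\{(g,t)\mid t\mbox{ tracks }g\}$ and weak evaluation $\ev((g,t),x)=g(x)$; since $\NN=(\N,\id{\N})$, the clause ``$t$ tracks $g$'' says exactly that $\tur{t}=g$ as a total function. Unfolding the definition of $\ttch_\ev$, for a point $(g,t)\in W$ it becomes the $\Set$-statement
\[\exists e.\ \forall x.\ \exists y.\ \bigl[\TU(e,x,y)\eq{\NN}\ss(\oo)\ \Land\ \UU(y)\eq{\NN}g(x)\bigr].\]
The one substantive step is the choice of witness: I take $e\colon=t$. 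As $(g,t)$ lies in $W$, the function $\tur{t}$ is total and equal to $g$, so for every $x$ the computation of $\tur{t}$ on $x$ halts; letting $y$ code that terminating computation yields $\TU(t,x,y)\eq{\NN}\ss(\oo)$ together with $\UU(y)\eq{\NN}\tur{t}(x)\eq{\NN}g(x)$. Since $(g,t)$ was arbitrary, $\ttch_\ev$ holds at every point of $W$, hence $\vdash\Forall f:W.\ttch_\ev(f)$.

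I do not anticipate a genuine obstacle: the whole content is that the tracking datum built into the objects of \pass is already a Kleene index, so the validity of $(\wttct)$ collapses to the defining property of $\TU$ and $\UU$. The only point deserving care is to read the quantifiers of $\stm$ as honest set-theoretic quantifiers, which is what reduces the sentence to the elementary meta-level verification above rather than to any manipulation of realizers.
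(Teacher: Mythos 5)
Your proposal is correct and follows essentially the same route as the paper's proof: both work with the same weak exponential $(W,V)$ whose points are pairs $(g,t)$ with $t$ tracking $g$, observe that tracking over $\NN=(\N,\id{\N})$ means $g=\tur{t}$ is total recursive, and take the tracking datum $e\colon=t$ as the Kleene index witnessing $\ttch_\ev$, verifying the sentence pointwise in the boolean set-theoretic semantics of $\stm$. Your explicit preliminary remarks on reading the quantifiers of $\stm$ as set-theoretic ones merely spell out what the paper leaves implicit.
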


\begin{proof}
Consider the weak exponential $(W,V)$ of $(\N,\id{N})$ to its power
as
\[W\colon=\{(g,t)\in \N^\N\times\N\mid t\mbox{ tracks }g\}\]
and $V$ the (restriction of the) second projection.
But $t$ tracks $g$ exactly when $g=\tur{t}$. So $\Gamma(W,V)$ is (in
bijection) with the set of total recursive functions on \N, and 
$\vdash \Forall f:(W,V). \ttch(f)$ in $\stm$
because
for all $(g,t)\in |W|$ there is $e\colon=t\in\N$ such that for
all $x\in\N$ there is $y\in\N$ such that
$\TU(e,x,y)=1\Land\UU(y)=f(x)$.
\end{proof}

\begin{lem}\label{super1} 
The formula $f:(W,V),e:\NN,x:\NN,y:\NN\mid\kleene{w}(e,x,y,f)$ in
$\stm(W\times\NN\times\NN\times\NN)$ 
has a \ska for $($the third
occurrence of$)$ $\NN$.
\end{lem}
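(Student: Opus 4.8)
The plan is to take as \ska the unbounded-search operator that returns the least Kleene computation witness. Writing $f=(g,t)$ for a point of $(W,V)$, so that $\tur{t}=g$ is total and $\ev(f,x)=\tur{t}(x)$, I would set $\epsilon(f,e,x)$ to be the least $y$ with $\TU(e,x,y)\eq{\NN}\ss(\oo)$, that is, the code of the halting computation of $\tur{e}(x)$. This assignment is tracked in \Kl1 by the program that reads $e$ and $x$ off the realizer $\epl{\epl{t,e},x}$ of the point $(f,e,x)$ and runs the search for a $y$ with $\TU(e,x,y)=1$; it thus defines a candidate arrow $\epsilon\colon(W,V)\times\NN\times\NN\to\NN$ of \pass, whose value is slotted into the third (computation-code) argument of $\kleene{w}$.

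I would then verify the defining condition $\Exists y:\NN.\kleene{w}(e,x,y,f)\vdashv\kleene{w}(e,x,\epsilon(f,e,x),f)$ directly, recalling that the fibres of $\stm$ are plain powersets of underlying sets, so the two sides are subsets of $\lvert W\rvert\times\N\times\N$ to be compared elementwise. The direction $\kleene{w}(e,x,\epsilon(f,e,x),f)\vdash\Exists y:\NN.\kleene{w}(e,x,y,f)$ is immediate, since the value $\epsilon(f,e,x)$ instantiates the bound variable. For the converse, if $(f,e,x)$ satisfies $\Exists y:\NN.\kleene{w}(e,x,y,f)$ then some $y$ has $\TU(e,x,y)=1$ and $\UU(y)=\ev(f,x)$, so $\tur{e}(x)$ halts; the search terminates at the computation code $y_0=\epsilon(f,e,x)$, and $\UU(y_0)=\tur{e}(x)=\ev(f,x)$, which is exactly $\kleene{w}(e,x,\epsilon(f,e,x),f)$.

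The step I expect to carry the whole weight of the lemma is showing that $\epsilon$ is a genuine, i.e.\ \emph{total}, arrow of \pass, for the search diverges on precisely those $(f,e,x)$ for which $\tur{e}(x)$ fails to halt. The saving remark is that the Skolem equation constrains $\epsilon$ only on the locus $\Exists y:\NN.\kleene{w}(e,x,y,f)$: off that locus both sides are empty and any value is admissible, so the genuine task is to extend the manifestly convergent search from the comprehension of that locus to a total map on all of $(W,V)\times\NN\times\NN$. Here I would try to exploit the weak-exponential datum $t$---since $\ev(f,x)=\tur{t}(x)$ is computed by a convergent subroutine, the target output is available---and to dovetail the witness search against this computation, returning a harmless default code on inputs that the bookkeeping reveals to carry no admissible witness, so that a single element of \Kl1 halts on every realizer of a point of the domain. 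Making this totalisation correct, that is reconciling the classical existential of $\stm$ with a recursive choice of computation witness, is the delicate heart of the proof and the point at which the specific shape of $\kleene{w}$ and the totality of $\tur{t}$ must really be used.
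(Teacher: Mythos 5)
Your first two paragraphs reconstruct essentially the paper's own proof: the paper also takes unbounded minimization, defining the candidate as $((g,t),e,x)\mapsto\min\{y\in\N\mid \TU(e,x,y)=1\Land\UU(y)=g(x)\}$, tracked by the corresponding search on $(t,e,x)$ with $g(x)$ recovered as $\tur{t}(x)$; that you drop the clause $\UU(y)=g(x)$ from the search is immaterial, since whenever some witness exists the least halting computation of $\tur{e}(x)$ already has the correct output, and your elementwise verification of the two entailments in the powerset fibres of \stm is exactly right. The divergence is at the point you honestly flag in your third paragraph. The paper's proof simply asserts that the search set is non-empty for every element of the domain (``since $t$ belongs to it''), whereas you correctly observe that it is empty whenever $\tur{e}(x)$ diverges (and, for the paper's version of the search, also when it converges to a value other than $g(x)$), so that the minimization is only a partial function and not yet an arrow of \pass.

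The gap you identify is genuine, and moreover your proposed dovetailing cannot close it: in the stated generality no \ska exists at all. Indeed, suppose $\epsilon:(W,V)\times\NN\times\NN\to\NN$ were an arrow of \pass satisfying the Skolem equation, with tracker $\tau$; fix a code $t_0$ of the constant-zero function and let $f_0=(\lambda x.0,t_0)\in W$. Then $(e,x)\mapsto\tur{\tau}(\epl{\epl{t_0,e},x})$ is total recursive, and by the Skolem equivalence the primitive recursive test
\[\TU\bigl(e,x,\tur{\tau}(\epl{\epl{t_0,e},x})\bigr)=1
\Land \UU\bigl(\tur{\tau}(\epl{\epl{t_0,e},x})\bigr)=0\]
holds precisely when $\Exists y:\NN.\kleene{w}(e,x,y,f_0)$ does, \ie precisely when $\tur{e}(x)$ halts with value $0$ --- deciding an r.e.-complete set. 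This is why the ``bookkeeping'' you hope for cannot exist: dovetailing can detect convergence (with right or wrong value) but can never certify divergence, which is exactly where a default value must be emitted; your remark that reconciling the classical existential of \stm with a recursive choice of witness is the delicate heart is thus accurate, and that reconciliation is impossible on the divergence locus. What is true --- and all that the application in \prx\ref{superweak} actually needs --- is the instance in which $e$ has first been Skolemized by the arrow $(g,t)\mapsto t$ of \lmx\ref{super2}: for the formula $\kleene{w}(t,x,y,f)$ in the context $(f,x)$ the search set always contains the code of the halting computation of $\tur{t}(x)$, because $\tur{t}$ is total (this is evidently what the paper's phrase ``since $t$ belongs to it'' is aiming at), so there the minimization is total, tracked, and a genuine \ska. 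The correct repair of your attempt is therefore not a cleverer totalisation but a restriction of the statement: Skolemize $e$ first and then $y$ along that substitution, rather than claiming a \ska for $y$ uniformly in a free variable $e$.
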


\begin{proof}
Consider the function $W\times\N\times\N\to\N$ defined as
\[((g,t),e,x)\mapsto\min\{y\in\N\mid
\TU(e,x,y)=1\Land\UU(y)=g(x)\}.\]
This function is tracked by (a code of) the partial recursive function
\[(t,e,x)\mapsto \min\{y \in\N\mid
\TU(e,x,y)=1\Land\UU(y)=\tur{t}(x)\}.\]
For every element $(g,t)\in W$ the set of numbers on the right-hand
side is non-empty since $t$ belongs to it. So it defines an arrow 
$\gamma:(W,V)\times\NN\times\NN\to\NN$ in \pass which is clearly the
required \ska.
\end{proof}

\begin{lem}\label{super2} 
The formula $f:(W,V),e:\NN\mid\Forall x:\NN.\Exists
y:\NN. \kleene{w}(e,x,y,f)$ in 
$\stm(W\times\NN)$ has a \ska for $\NN$. 
\end{lem}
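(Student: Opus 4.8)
The plan is to exhibit the \ska explicitly and to reduce its verification to \prx\ref{super}. Recall that an element of the weak exponential $(W,V)$ is a pair $(g,t)$ with $t$ tracking $g$, equivalently with $g=\tur{t}$, and that $V(g,t)=t$. The second projection $(g,t)\mapsto t$ is a morphism $V:(W,V)\to\NN$ of partitioned assemblies, tracked by a code for the identity function; I set $\epsilon\colon=V$. Writing $\psi(f,e)$ for the formula $\Forall x:\NN.\Exists y:\NN.\kleene{w}(e,x,y,f)$, I claim that $\epsilon$ is the required \ska for the $e$-occurrence of $\NN$, that is
\[f:(W,V)\mid\Exists e:\NN.\psi(f,e)\vdashv\psi(f,\epsilon(f)).\]
The right-to-left entailment is immediate: instantiating the existential at $e=\epsilon(f)$ gives $\psi(f,\epsilon(f))\vdash\Exists e:\NN.\psi(f,e)$.

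For the converse entailment the key observation is that the witness for $e$ produced in the proof of \prx\ref{super} is uniform in $f$ and equals $V(f)$. Indeed, interpreting the formula in $\stm=\pw\circ\Gamma$, a predicate over $(W,V)$ is a subset of $W$; for every $(g,t)\in W$, taking $e=t=\epsilon(g,t)$ and any $x\in\N$, the function $g=\tur{t}$ converges on $x$, so there is $y\in\N$ with $\TU(t,x,y)=\ss(\oo)$ and $\UU(y)=g(x)=\ev((g,t),x)$. This is exactly the computation carried out in \prx\ref{super}, so $f:(W,V)\mid\tt\vdash\psi(f,\epsilon(f))$; that is, $\psi(f,\epsilon(f))$ is the top element of $\stm(W)$, whence $\Exists e:\NN.\psi(f,e)\vdash\psi(f,\epsilon(f))$ trivially.

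Since $\stm$ is a boolean \hyper, no delicacy about the fibrewise order arises, and the single point needing attention is that $\epsilon=V$ is a genuine arrow of \pass, which is clear from the tracking noted above. I therefore anticipate no real obstacle: the content of the lemma is precisely that the existential witness furnished by \prx\ref{super} is realised by the structural second projection $V$, so that the Skolemisation of $e$ is available for free.
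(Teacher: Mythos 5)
Your proposal is correct and follows essentially the same route as the paper: the paper's proof also takes the \ska to be the arrow determined by $(g,t)\mapsto t:W\to\N$, i.e.\ the second projection $V$. Your additional verification that $\psi(f,\epsilon(f))$ is the top element of $\stm(W)$ (via the computation in \prx\ref{super}) is exactly the content the paper leaves implicit.
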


\begin{proof} 
A \ska $\epsilon:W\to \NN$ is determined by the function 
$(g,t)\mapsto t:W\to\N$.
\end{proof}

Applying the results in previous sections we will obtain that 
\begin{itemize}
\item $(\wttct)$ holds in \Wsb{\pass}, \Ssb{\ass} and \Sb{\eff};
\item $(\fct)$ holds in \Wsb{\pass} and \Sb{\eff};
\end{itemize}
as these are all essentially inherited from the validity of $(\wttct)$
in \stm. 

Accordingly with the previous sections, we shall write $\Pi$ and
$\Sigma$ for the universal and the existential quantification in
$\Wsb{\pass}$, while we will write $\forall$ and $\exists$ for the
universal and the existential quantification in $\stm$.

\begin{prop}\label{superweak} 
The doctrine \Wsb{\pass} satisfies $(\wttct)$. 
\end{prop}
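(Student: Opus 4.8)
The plan is to transfer $(\wttct)$ from $\stm$ to $\Wsb{\pass}$ using the adjunction of \prx\ref{aggiunzione}. Both doctrines live over the same category $\pass$ with the same weak exponential $(W,V)$ of $\NN$ with $\NN$ and the same weak evaluation $\ev$, so the natural-number structure and the Kleene formula $\kleene{\ev}$ are built from the same underlying arrows in $\pass$; only the fibres differ. By \prx\ref{super} we already know $\vdash\Forall f:(W,V).\,\ttch_\ev(f)$ holds in $\stm$. The goal $(\wttct)$ in $\Wsb{\pass}$ unfolds to
\[
\vdash\Pi f:(W,V).\,\SIGMA e:\NN.\,\PI x:\NN.\,\SIGMA y:\NN.\,\kleene{\ev}(e,x,y,f),
\]
where $\Pi$ and $\Sigma$ are the quantifiers of $\Wsb{\pass}$. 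First I would observe that, since $\stm$ has full weak comprehensions and comprehensive diagonals (it is a boolean \hyper on $\pass$), \prx\ref{aggiunzione} supplies an adjoint pair $\RR\dashv\LL{}$ between $\stm$ and $\Wsb{\pass}$ with $\LL{}\circ\RR=\id{\stm}$, hence satisfying Frobenius reciprocity.

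The core idea is to apply $\RR$ to the true sentence of \prx\ref{super} and push it through the quantifiers. Being a right adjoint, $\RR$ commutes with $\forall$ and with $\Implies$ by \prx\ref{addexteraperdextera} (and \crx\ref{ddd}), so the two outer $\forall$-quantifiers of $\stm$ are carried to the corresponding $\Pi$-quantifiers of $\Wsb{\pass}$. The delicate points are the two existential quantifiers $\exists e$ and $\exists y$, because right adjoints do \emph{not} in general commute with the \emph{left} adjoints $\exists$. This is exactly where the Skolem arrows of \lmx\ref{super1} and \lmx\ref{super2} enter: a Skolem arrow $\epsilon$ for a formula $\alpha$ gives $\D_{\fst}\alpha=\fp{\ple{\id{},\epsilon}}(\alpha)$, so the existentially quantified formula in $\stm$ is \emph{equal} to a substitution instance with no existential, and such substitution instances are preserved by the 1-arrow $\RR$ regardless of adjointness. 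So the plan is to use \lmx\ref{super1} to replace $\Exists y:\NN.\kleene{\ev}(e,x,y,f)$ by $\kleene{\ev}(e,x,\gamma(f,e,x),f)$, then use \lmx\ref{super2} to replace the outer $\Exists e:\NN.(\ldots)$ by its value at the Skolem arrow $\epsilon:W\to\NN$, obtaining a purely universal, existential-free formula whose truth in $\stm$ transfers verbatim to $\Wsb{\pass}$ along the inclusion of fibres.

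I expect the main obstacle to be bookkeeping rather than conceptual: one must check that the Skolem witnesses computed inside $\stm$ are genuine arrows of $\pass$ (which \lmx\ref{super1} and \lmx\ref{super2} already guarantee), and that the chain of equalities $\D_{\fst}\alpha=\fp{\ple{\id{},\epsilon}}(\alpha)$ is compatible with how the inclusion $\stm\to\Wsb{\pass}$ acts on the relevant substitutions. Concretely I would argue as follows: by \lmx\ref{super2}, in $\stm$ the sentence $\Forall f:(W,V).\,\ttct_\ev(f)$ is equivalent to $f:(W,V)\vdash\Forall x:\NN.\Exists y:\NN.\kleene{\ev}(\epsilon(f),x,y,f)$, and by \lmx\ref{super1} the inner $\exists y$ likewise reduces to an $\ev$-substitution; what remains is a universally quantified equation over $\NN$ that holds in $\stm$. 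Since the formula contains only $\forall$, finite conjunctions and equality of $\NN$, and these operations agree between $\stm$ and $\Wsb{\pass}$ on powers of $\NN$ (the inclusion on fibres preserves them), the same sentence holds in $\Wsb{\pass}$; re-introducing the existentials via the very same arrows $\gamma$ and $\epsilon$ — now read as witnesses for the $\Sigma$-quantifiers of $\Wsb{\pass}$ — yields $(\wttct)$ in $\Wsb{\pass}$. The one genuinely careful step is confirming that the weak evaluation used in $\Wsb{\pass}$ can be taken to be the same $\ev$ (justified since both doctrines share the domain $\pass$), so that by \lmx\ref{CTnotdepends} the choice of evaluation is immaterial and the transferred sentence is indeed the canonical $(\wttct)$.
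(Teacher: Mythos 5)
Your proposal is correct and takes essentially the same route as the paper's own proof: transfer the sentence of \prx\ref{super} along the right adjoint $\RR:\stm\ntr\Wsb{\pass}$ supplied by \prx\ref{aggiunzione}, commuting $\RR$ past the universal quantifiers via \crx\ref{ddd} and past the two existentials via the Skolem arrows of \lmx\ref{super1} and \lmx\ref{super2}, which replace them by substitution instances that the 1-arrow $\RR$ preserves. The only quibble is notational: the adjunction of \prx\ref{aggiunzione} is $\LL{}\dashv\RR$ (with $\LL{}\circ\RR$ the identity on $\stm$), not $\RR\dashv\LL{}$, but you treat $\RR$ as the right adjoint throughout, exactly as the paper does.
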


\begin{proof}
By \prx\ref{aggiunzione} there is a right adjoint
of doctrines $\RR:\stm\ntr\Wsb{\pass}$. Let $(W,V)$ be
the weak exponential of $(\N,\id{N})$ to its power with weak
evaluation $w$. Since $\RR$ maps equality predicates to equality
predicates and commutes with substitutions, the formula $\kleene{w}$
for \Wsb{\pass} is the image under $\RR$ of 
$\kleene{w}$ for \stm. \crx\ref{ddd}, \lmx\ref{super1} and
\lmx\ref{super2} ensure that, in \Wsb{\pass}, 
\[\vdash \PI f:(W,V). \SIGMA e:\NN.\PI x:\NN.\SIGMA y:\NN. 
\cmp{\kleene{w}}(e,x,y,f).\]
if and only if
\[\vdash \cmp{\Forall f:(W,V). \Exists e:\NN.\Forall x:\NN.\Exists y:\NN. 
\kleene{w}(e,x,y,f)}.\]
And this holds by \prx\ref{super}.
\end{proof}

\begin{cor}\label{superweak2}
\begin{enumerate}\thmitem
\item The doctrines \Ssb{\ass} and \Sb{\eff} satisfy $(\ttct)$. 
\item The doctrines \Wsb{\pass} and \Sb{\eff} satisfy $(\fct)$.
\end{enumerate}
\end{cor}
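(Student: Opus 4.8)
The plan is to harvest the transfer machinery built in Sections~\ref{scp} and~\ref{ads}, since all four validities descend from the single fact that $(\wttct)$ holds in $\stm$ (\prx\ref{super}). Two identifications organise the argument: by \thx\ref{carfur} one has $\Ssb{\ass}=\Q{\stm}$, while \rmx\ref{rcar} gives $\eff=\pass\exl$, so that $\Sb{\eff}=\Sb{\pass\exl}$, which by the equivalence $\Sb{\ct{C}\exl}\equiv\Q{\Wsb{\ct{C}}}$ recalled in Section~\ref{rcp} is the elementary quotient completion of $\Wsb{\pass}$. With these in hand each clause is a matter of feeding the right base doctrine into the right transfer theorem and checking its hypotheses.

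For part~(i) I would handle $\Ssb{\ass}$ by applying \prx\ref{CT}(ii) to $P=\stm$. The doctrine $\stm$ is \aritm, being a boolean \aritm \hyper, and it has comprehensive diagonals: its equality predicate over $(P,T)$ is the honest diagonal of the underlying set, so $\tt\leq\delta(f,g)$ means $f(x)=g(x)$ for every $x$, whence $f=g$ because arrows of \pass are tracked functions. Since $(\wttct)$ holds in $\stm$ by \prx\ref{super}, \prx\ref{CT}(ii) delivers $(\ttct)$ in $\Q{\stm}=\Ssb{\ass}$. For $\Sb{\eff}$ I would rather invoke \crx\ref{completions-CT}(ii) with $\ct{C}=\pass$, whose sole hypothesis is that $\Wsb{\pass}$ be \aritm (\lmx\ref{today}); as $(\wttct)$ holds in $\Wsb{\pass}$ by \prx\ref{superweak}, the corollary yields $(\ttct)$ in $\Sb{\pass\exl}=\Sb{\eff}$. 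Routing this case through \crx\ref{completions-CT} is convenient precisely because that corollary has already absorbed the passage through $\Q{\Wsb{\pass}}$, sparing us a separate check of the hypotheses of \prx\ref{CT} for $\Wsb{\pass}$.

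Part~(ii) is where the choice principles of \lmx\ref{today} come into play. That lemma records that $\Wsb{\pass}$ is \aritm and satisfies \ac on every object, in particular on the \nno $(\N,\id{\N})$, which is exactly \rcn. Combining \rcn with $(\wttct)$ from \prx\ref{superweak}, \prx\ref{choiceCT} upgrades the weak form to the full $(\fct)$ in $\Wsb{\pass}$. The remaining case, $(\fct)$ in $\Sb{\eff}$, then follows by transfer along the completion: \crx\ref{completions-CT}(i) with $\ct{C}=\pass$ asserts that $(\fct)$ holds in $\Wsb{\pass}$ if and only if it holds in $\Sb{\pass\exl}=\Sb{\eff}$, so the conclusion is immediate.

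The substantive work has already been discharged in \prx\ref{super}, \prx\ref{superweak} and the characterisation theorems, so I expect \emph{no} genuine obstacle here, only disciplined bookkeeping. The two points I would be careful to record explicitly are the verification that $\stm$ has comprehensive diagonals, required before \prx\ref{CT} can be applied, and the correct reading of $\Sb{\eff}$ as the quotient completion $\Q{\Wsb{\pass}}$; both are immediate once the identifications of the first paragraph are in place.
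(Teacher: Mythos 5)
Your proposal is correct and follows the paper's own strategy almost step for step. Part~(i) --- the $\Ssb{\ass}$ case via \prx\ref{CT}(ii) applied to $\Q{\stm}\equiv\Ssb{\ass}$ (\thx\ref{carfur}, feeding in \prx\ref{super}), and the $\Sb{\eff}$ case via \crx\ref{completions-CT}(ii) together with $\eff\equiv\pass\exl$ and \prx\ref{superweak} --- is exactly the paper's argument, as is your derivation of $(\fct)$ in \Wsb{\pass} from \rcn, $(\wttct)$ and \prx\ref{choiceCT}. The one place you diverge is $(\fct)$ in \Sb{\eff}: the paper applies \prx\ref{choiceCT} a second time, to \Sb{\eff} itself, which requires establishing \rcn in \Sb{\eff} by transferring it from \Wsb{\pass} along the quotient completion via \crx\ref{RCN}; you instead transfer the already-established schema $(\fct)$ wholesale from \Wsb{\pass} using \crx\ref{completions-CT}(i). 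Both routes rest on the same hypothesis (that \Wsb{\pass} is \aritm, \lmx\ref{today}) and both are valid; yours is marginally more economical since it bypasses \crx\ref{RCN} altogether, while the paper's route records along the way the independently interesting fact that \Sb{\eff} satisfies \rcn. Finally, the two side conditions you flag --- that \stm has comprehensive diagonals (needed for \prx\ref{CT}), and the reading of \Sb{\eff} as $\Q{\Wsb{\pass}}$ --- are precisely the hypotheses that must be checked, and your verifications of them are sound.
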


\begin{proof} (i) The doctrine $\stm$ satisfies $(\ttct)$ by
\prx\ref{super}. By \thx\ref{carfur}, the doctrine $\Ssb{\ass}$ is
(equivalent to) $\Q{\stm}$, so \prx\ref{CT} applies, and
\Ssb{\ass} satisfies $(\ttct)$. Besides,
the doctrine $\Wsb{\pass}$ satisfies $(\ttct)$ by
\prx\ref{superweak}. By the results in \cite{RobinsonE:abslr}, the
topos $\eff$ is (equivalent to) $\pass\exl$; hence 
\crx\ref{completions-CT} applies, and the doctrine
\Sb{\eff} satisfies $(\ttct)$.

\noindent(ii)
By \prx\ref{choiceCT}, it suffices to show that both \Wsb{\pass} and
\Sb{\eff} satisfy $(\ttct)$ and \rcn. 
The doctrine \Wsb{\pass} satisfies $(\ttct)$ by \prx\ref{superweak};
the doctrine \Sb{\eff} 
satisfies $(\ttct)$ by \crx\ref{completions-CT}. As for \rcn, any \wsd
satisfies \ac so, in particular, it satisfies \rcn. Hence, by
\crx\ref{RCN}, \Sb{\eff} satisfies \rcn as well.
\end{proof}

\begin{rem}
As is well known, \pass is not cartesian closed, see \eg \cite{eff}.
One can see also that this is so because of the validity in
\Wsb{\pass} of $(\fct)$ and of \ac.
Indeed, if \pass were cartesian closed, since it has finite limits it
would satisfy {\it extensionality of functions} in the following
form: for all object $X,Y$ in \pass 
and $f,g:X\to Y$
\[\vdash \Forall x:A. (f(x)\eq{Y}g(x))\Implies
(\lambda x. f(x)\eq{Y^X}\lambda x. g(x))\]
where $Y^X$ indicates the exponential of $Y$ over $X$ and
$\lambda x. f(x)$ is the usual $\lambda$-notation for the abstraction
of $f$. But it is well known, see for example \cite{DT88}, 
that $(\fct)$ and \ac are inconsistent with the extensionality of function
in a many-sorted first order theory including arithmetic and finite
types. 
\end{rem}

\begin{rem}
The validity of $(\ttct)$ in \Ssb{\ass} implies that neither $(\fct)$ nor
\auc (hence \ac) are valid in \Ssb{\ass}, as its underlying logic is
boolean, see \cite{mtt} for a logical argument. 
\end{rem}

\begin{rem}\label{remq} Observe that, since the $\nno$ in $\ass$
  coincides with that in \pass, \lmx\ref{super1} 
and \lmx\ref{super2} can be proved also for $\Ssb{\ass}$. By
\rmx\ref{pippopippo} there is an adjunction between
$\Ssb{\ass}$ and $\Sb{\ass}$ that satisfies the hypotheses of
\crx\ref{ddd}. Hence \Sb{\ass} satisfies $(\ttct)$ by
\crx\ref{superweak2}-(i). We also know that $\Sb{\ass}$ satisfies
$(\fct)$, but an abstract proof of this requires an abstract treatment of
the regular completion of a lex category, which we do not include
here. We just stress that the regular completion of a lex category can
be obtained as an instance of a more general construction introduced
in \cite{MPR} that involves elementary doctrines and that produces
\ass when such a construction is performed over $\Wsb{\pass}$. 
\end{rem}

To compare $\stm$ and $\Wsb{\pass}$ we can apply the reflection
in \prx\ref{aggiunzione}. In this case it turn out that the object of
a $\stm$ over $A$ in \pass coincides with the double negated
objects of $\Wsb{\pass}$ over $A$.
This fact can be deduced from a general result.

\begin{prop}\label{notnotnot} Suppose $P$ is a \whyper with full weak
comprehensions and comprehensive diagonals. Suppose that for every
$f$ in \ct{C} the left adjoint along $f$ is stable under the double
negation, \ie $\D_f = \neg\neg\D_f$, and that $\RR$ preserves bottom
elements, \ie $[\cmp{\ff_A}]$ is the bottom element in
$\Wsb{\ct{C}}(A)$.
Then $\RR\circ \LL{}:\Wsb{\ct{C}}\to\Wsb{\ct{C}}$ coincide with the
double negation, \ie it maps \ec{f} to $\neg\neg\ec{f}$. 
\end{prop}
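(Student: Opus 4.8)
The plan is to show that the composite $\RR\circ\LL{}$ sends each variation $\ec{f}$ to $\neg\neg\ec{f}$ by exploiting the explicit descriptions of $\LL{}$ and $\RR$ recalled after \prx\ref{aggiunzione}, namely $\LL{[f]}=\D_f(\tt)$ and $\RR_A(\alpha)=[\cmp{\alpha}]$. Unwinding these, for $\ec{f:X\to A}$ we have $\RR_A\LL{[f]}=[\cmp{\D_f(\tt_X)}]$, so the entire statement reduces to the single identity
\[
[\cmp{\D_f(\tt_X)}]=\neg\neg\,[f]
\quad\text{in }\Wsb{\ct{C}}(A).
\]
First I would reduce the right-hand side to something expressed through comprehensions. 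Since by hypothesis $\RR$ preserves bottom elements and $\RR$ is a right adjoint in an adjunction satisfying Frobenius reciprocity (by the remark after \prx\ref{aggiunzione}), \crx\ref{ddd} tells us that $\RR$ transports Heyting implication, and in particular negation, from $P$ to $\Wsb{\ct{C}}$; that is, $\cmp{\neg^P\beta}=\neg^{\Wsb{\ct{C}}}\cmp{\beta}$, using that $\RR$ sends $\ff$ to the bottom of $\Wsb{\ct{C}}$. Iterating, $\cmp{\neg\neg^P\beta}=\neg\neg^{\Wsb{\ct{C}}}\cmp{\beta}$ for every $\beta$ in $P(A)$.

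The key step is then to identify $\neg\neg[f]$ in $\Wsb{\ct{C}}(A)$ with $[\cmp{\neg\neg^P\LL{[f]}}]$, and to simplify the latter. Applying the previous display with $\beta=\LL{[f]}=\D_f(\tt_X)$ gives $\cmp{\neg\neg^P\D_f(\tt_X)}=\neg\neg^{\Wsb{\ct{C}}}\cmp{\D_f(\tt_X)}=\neg\neg^{\Wsb{\ct{C}}}\RR_A\LL{[f]}$. Here is where the stability hypothesis $\D_f=\neg\neg\D_f$ enters: it forces $\neg\neg^P\D_f(\tt_X)=\D_f(\tt_X)$, so the double negation on the $P$-side collapses and we get $\cmp{\D_f(\tt_X)}=\neg\neg^{\Wsb{\ct{C}}}\cmp{\D_f(\tt_X)}$. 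It remains to link $\cmp{\D_f(\tt_X)}$ with $[f]$ itself: using the adjunction $\LL{}\dashv\RR$ with $\LL{}\circ\RR=\id{P}$ and $\id{\Wsb{\ct{C}}}\le\RR\circ\LL{}$, together with the fact that double negation is a closure operator, one shows that $\RR\LL{[f]}$ and $[f]$ have the same double negation, so that $\neg\neg^{\Wsb{\ct{C}}}\RR_A\LL{[f]}=\neg\neg^{\Wsb{\ct{C}}}[f]=\neg\neg[f]$.

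I expect the main obstacle to be the last linking step: proving that $\RR\circ\LL{}$ does not change the double negation of $[f]$, i.e.\ that $\neg\neg\RR\LL{[f]}=\neg\neg[f]$. One inequality is immediate from $\id{\Wsb{\ct{C}}}\le\RR\circ\LL{}$ and monotonicity of $\neg\neg$; the reverse requires that $\RR\LL{[f]}\le\neg\neg[f]$, which I would extract by comprehending the counit-type inequality and using that $[f]\le\neg\neg[f]$ is dense in the relevant sense. Concretely, one checks that the canonical map realizing $\id{\Wsb{\ct{C}}}\le\RR\LL{}$ becomes an isomorphism after applying $\neg\neg$, because the discrepancy between $[f]$ and $\RR\LL{[f]}$ is exactly the failure of $f$ to be monic, which is invisible to the $\neg\neg$-closure once the bottom-preservation and double-negation-stability hypotheses are in force. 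Assembling the three displayed equalities then yields $\RR\LL{[f]}=\neg\neg[f]$ for all $\ec{f}$, which is the claim.
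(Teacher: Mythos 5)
Your skeleton is viable and its first two thirds are correct: indeed $\RR\circ\LL{}([f])=[\cmp{\D_f\tt_X}]$, and \crx\ref{ddd} together with the hypothesis that $[\cmp{\ff_A}]$ is the bottom element does give $[\cmp{\neg\neg\beta}]=\neg\neg[\cmp{\beta}]$, so the stability hypothesis $\D_f=\neg\neg\D_f$ correctly makes $\RR\circ\LL{}([f])$ a $\neg\neg$-closed element of $\Wsb{\ct{C}}(A)$. The genuine gap is the step you yourself single out as the main obstacle, namely $\RR\circ\LL{}([f])\le\neg\neg[f]$: what you offer there is not a proof. Saying that ``the canonical map realizing $\id{\Wsb{\ct{C}}}\le\RR\circ\LL{}$ becomes an isomorphism after applying $\neg\neg$'' merely restates the goal, and the heuristic you give for it is off target: the discrepancy between $[f]$ and $[\cmp{\D_f\tt_X}]$ is not ``the failure of $f$ to be monic'' (multiplicities are invisible in the poset reflection of $\ct{C}/A$ anyway), but the possible failure of $\cmp{\D_f\tt_X}$ to factor back through $f$, \ie the absence of a section; by \thx\ref{car} this is exactly the content of \rc, which is not among the hypotheses, so no appeal to ``density'' can be expected to dissolve it for free.

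The missing inequality can, however, be established with tools you already cite, so your route is repairable. The pair $(\LL{},\RR)$ satisfies Frobenius reciprocity because $\LL{}\circ\RR=\id{P}$, and $\LL{}$ preserves binary meets, being a 1-arrow of doctrines. Hence
\[
\LL{}\bigl(\neg[f]\Land\RR\circ\LL{}([f])\bigr)
=\LL{}(\neg[f])\Land\LL{}([f])
=\LL{}\bigl(\neg[f]\Land[f]\bigr)
\le\LL{}([\cmp{\ff_A}])=\LL{}\RR(\ff_A)=\ff_A ,
\]
where the inequality uses that $[\cmp{\ff_A}]=\RR(\ff_A)$ is the bottom of $\Wsb{\ct{C}}(A)$; transposing along $\LL{}\dashv\RR$ gives $\neg[f]\Land\RR\circ\LL{}([f])\le\RR(\ff_A)$, \ie $\RR\circ\LL{}([f])\le\neg\neg[f]$, and combined with your closure step this finishes the proof. (A slightly sharper use of the same adjunction shows $\neg\neg[f]=\RR(\neg\neg\D_f\tt_X)$ outright, after which stability alone concludes and your closure step becomes redundant.) Note the contrast with the paper's own proof, which never needs to compare $\neg\neg[f]$ with $\neg\neg\,\RR\circ\LL{}([f])$: it uses stability to rewrite $[\cmp{\D_f\tt_X}]$ as $[\cmp{\neg\B_f\neg\tt_X}]$, pushes the comprehension inside by \crx\ref{ddd} to get $[\neg\Pi_f\neg\cmp{\tt_X}]$, and concludes via $\neg\Pi_f\neg\cmp{\tt_X}=\neg\neg\Sigma_f(\id{X})$ and $[\Sigma_f(\id{X})]=[f]$, so the existential quantifier is traded for a universal one precisely so that the commutation results for right adjoints apply.
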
.

\begin{proof} Consider $f:X\to A$ and recall that $\RR\circ
\LL{}([f])=[\cmp{\D_f\tt_X}]$. Note that \crx\ref{ddd} implies that
$\RR$ commutes with the universal quantification and with the
implication. Then it is 
\[[\cmp{\D_f\tt_X}]=[\cmp{\neg\neg\D_f\tt_X}]=[\cmp{\neg\B_f\neg\tt_X}]=
[\neg\Pi_f\neg\cmp{\tt_X}]=\neg\neg[\Sigma_f\cmp{\tt_X}]=
\neg\neg[\Sigma_f(\id{X})]\]
and hence the claim as $[\Sigma_f(\id{X})]=[f]$.
\end{proof}

Hyland in \cite{eff} showed that assemblies are the
$\neg\neg$-separated objects of $\eff$ for the Lawvere-Tierney
topology of double negation, \ie an object of $\eff$ is in $\ass$ if
and only if its equality predicate is $\neg\neg$-closed. This is also
a corollary of our previous results.

\begin{prop}[Hyland]\label{separated}
The category \ass is the full reflective subcategory
of \eff on $\neg\neg$-separated objects. 
\end{prop}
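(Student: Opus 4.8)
The plan is to deduce the statement from the two presentations of the categories as bases of elementary quotient completions, together with the double-negation identity of \prx\ref{notnotnot}. By \rmx\ref{rcar} we have $\eff\simeq\pass\exl$, and since $\Sb{\ct{C}\exl}\simeq\Q{\Wsb{\ct{C}}}$, the category \eff is (equivalent to) the base $\ct{Q}_{\Wsb{\pass}}$ of $\Q{\Wsb{\pass}}$; under this identification an object is a pair $(A,\rho)$ with $\rho$ a $\Wsb{\pass}$-equivalence relation (a pseudo-equivalence relation, \exx\ref{exer}(b)) on a partitioned assembly $A$, and its equality predicate $\delta_{(A,\rho)}$ is $\rho$ itself. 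Dually, by \thx\ref{carfur} the category \ass is the base $\ct{Q}_{\stm}$ of $\Q{\stm}$, whose objects are pairs $(A,\rho)$ in which $\rho$ is a $\stm$-equivalence relation, i.e.\ an ordinary equivalence relation on the underlying set $\Gamma A$. Finally I record the adjoint pair of \prx\ref{aggiunzione} instantiated at $P=\stm$ and $\ct{C}=\pass$: the transformations $\RR$ and $\LL{}$ with $\LL{}\circ\RR=\id{\stm}$ and $\id{\Wsb{\pass}}\le\RR\circ\LL{}$.

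First I would transport this adjoint pair to the completions. The two displayed relations exhibit $\RR$ and $\LL{}$ as an adjunction in the $2$-category \EH with $\LL{}$ left adjoint to $\RR$ and with identity counit; applying the (2-functorial) elementary quotient completion yields an adjunction between $\Q{\Wsb{\pass}}$ and $\Q{\stm}$ whose counit is again an identity. On base categories this is an adjunction $L^{*}\dashv R^{*}$, where $L^{*}:\eff\to\ass$ is induced by $\LL{}$ and $R^{*}:\ass\to\eff$ is induced by $\RR$, so that $R^{*}(A,\rho)=(A,\RR\rho)$ is precisely the native embedding of \ass into \eff. Since the counit is invertible, $R^{*}$ is fully faithful; hence \ass is a full reflective subcategory of \eff with reflector $L^{*}$, and the associated idempotent monad $R^{*}L^{*}$ is induced by $\RR\circ\LL{}$, which by \prx\ref{notnotnot} is the double negation (note that $\stm$ is boolean, so $\D_f=\neg\neg\D_f$ holds trivially). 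Concretely it sends $(A,\rho)$ to $(A,\neg\neg\rho)$, the reflection unit being $[\id{A}]:(A,\rho)\to(A,\neg\neg\rho)$.

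It then remains to identify the essential image of $R^{*}$ with the $\neg\neg$-separated objects, and here I would use Hyland's criterion recalled above: $(A,\rho)$ is $\neg\neg$-separated exactly when its equality predicate $\rho$ is $\neg\neg$-closed. Using $\LL{}\circ\RR=\id{\stm}$ one sees that the fixed points of $\RR\circ\LL{}=\neg\neg$ are exactly the elements in the image of $\RR$: if $\rho=\RR\sigma$ then $\neg\neg\rho=\RR\LL{}\rho=\RR\LL{}\RR\sigma=\RR\sigma=\rho$, and conversely any $\neg\neg$-closed $\rho$ equals $\RR(\LL{}\rho)$. Thus $\rho$ is $\neg\neg$-closed iff it is the image under $\RR$ of an ordinary equivalence relation, i.e.\ iff $(A,\rho)$ lies in the image of $R^{*}$, i.e.\ iff $(A,\rho)$ comes from \ass. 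Combined with the previous paragraph, the $\neg\neg$-separated objects of \eff are precisely those of \ass, which is therefore the full reflective subcategory of \eff on them. (Should one prefer to avoid $2$-functoriality, the reflector $(A,\rho)\mapsto(A,\neg\neg\rho)$ can be checked directly: its unit is $[\id{A}]$, and the universal property against a $\neg\neg$-separated $(B,\tau)$ follows from monotonicity of $\neg\neg$, since the defining condition $a:A,a':A\mid\rho(a,a')\vdash\tau(f(a),f(a'))$ of a morphism $[f]$ then also holds with $\neg\neg\rho$ in place of $\rho$, $\tau$ being $\neg\neg$-closed.)

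The main obstacle I anticipate is the bookkeeping that glues the abstract doctrinal reflection to the concrete realizability picture. One must confirm that the completion is genuinely $2$-functorial, so that the adjunction $\LL{}\dashv\RR$ lifts with its identity counit intact; one must check that the fully faithful $R^{*}$ agrees up to isomorphism with the standard inclusion $\ass\hookrightarrow\eff$ (equivalently, with the canonical embedding $\pass\rgl\hookrightarrow\pass\exl$); and, most delicately, one must verify that the negation entering Hyland's criterion---computed in the fibre of $\Sb{\eff}$ over $(A,\rho)\times(A,\rho)$, a sub-poset of $\Wsb{\pass}(A\times A)$---restricts from and coincides with the negation of $\Wsb{\pass}$ used in \prx\ref{notnotnot}. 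This last point rests on the poset \des{\blank} of descent data being closed under Heyting implication and the bottom element, so that $\neg\neg$ is computed fibrewise in the same way.
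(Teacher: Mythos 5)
Your proposal is correct and follows essentially the same route as the paper's own proof: identify \ass and \eff with the bases of $\Q{\stm}$ and $\Q{\Wsb{\pass}}$ (via \thx\ref{carfur} and \crx\ref{car3}), lift the adjunction of \prx\ref{aggiunzione} through the 2-functorial elementary quotient completion to get a fully faithful right adjoint $\ct{Q}_{\stm}\to\ct{Q}_{\Wsb{\pass}}$, and use \prx\ref{notnotnot} to recognize the induced idempotent monad as $(A,\rho)\mapsto(A,\neg\neg\rho)$, so that the essential image consists exactly of the objects with $\neg\neg$-closed equality predicate. Your additional checks---that booleanness of \stm yields $\D_f=\neg\neg\D_f$, and the fixed-point argument identifying the $\neg\neg$-closed predicates with the image of $\RR$---merely spell out steps the paper leaves implicit.
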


\begin{proof}
The 1-arrow $\RR:\stm\ntr\Wsb{\pass}$ is full
and has a left adjoint $\LL{}$ by \crx\ref{aggiunzione}. Since the
elementary quotient completion is a 2-functor, there is a full and
faithful functor $G:\ct{Q}_{\stm}\to \ct{Q}_{\Wsb{\pass}}$
which has a left adjoint $F$. Therefore $(A,\rho)$ in
$\ct{Q}_{\Wsb{\pass}}$ is in 
$\ct{Q}_{\stm}$ if and only if $(A,\rho)\simeq
GF(A,\rho)$. From the construction of $F$ and $G$ and from
\prx\ref{notnotnot} this happens if and only if 
\[(A,\rho)\simeq(A,\RR_{A\times A}\circ \LL{}_{A\times A}(\rho))=
(A,\neg\neg \rho)\]
But $\rho$ is the equality predicate over $(A,\rho)$ for the doctrine
$\Q{\Wsb{\pass}}\equiv\Sb{eff}$ by \crx\ref{car3}. The claim follows
from $\ct{Q}_{\stm}\equiv \ass$ by \thx\ref{carfur}. 
\end{proof}

\begin{rem}\label{nuovo}
The category $\ct{Equ}$ of equilogical spaces introduced in
\cite{ScottD:newcds} is the domain of the elementary quotient
completion of the doctrine of subspace inclusions on $\ct{Top}_0$, see
\cite{MPR}, and also \cite{Pasquali2018} where a more
general situation is considered. In the same vein, one can show that 
 $\ct{Equ}$ is the full and reflective subcategory of
$(\ct{Top}_0)\exl$ on those objects whose equality predicate is stable
under the double negation \cite{RosoliniG:equsfs}. 
\end{rem}

\section{Kleene's realizability interpretation in
 \Wsb{\pass}}\label{kri}

It is well known that the interpretation of Intuitionistic Arithmetic (\ha)
in the internal logic of \eff, \ie the \hyper
$\Sb{\eff}:\eff\op\ftr\ISL$, extends Kleene's realizability
interpretation, see \cite{eff,OostenJ:reaait}. 
However this is not evident in the tripos which produces
\eff as explained in \cite{HylandJ:trit} since the tripos does not 
validate Intuitionistic Arithmetic.

Here we show that \Wsb{\pass} is responsible for that result since
\eff is the domain of the elementary quotient completion of \Wsb{\pass}.
Hence \eff inherits the interpretation of connectives and quantifiers
from \Wsb{\pass}, as explained in \cite{MaiettiME:quofcm}. 

The theory \ha is interpreted in the arithmetic \whyper
$\Wsb{\pass}:\pass\op\ftr\ISL$ taking the \nno
$\NN\colon=(\N,\id{\N})$ in \pass 
as the domain of the interpretation and interpreting the operations
with the standard operations on the \nno.

For a formula $\phi$ in \ha with at most $n$ free variables
$x_1,\dots,x_n$, let $[\phi^I:X\to\N^n]$ be interpretation of $\phi$
in \Wsb{\pass} as in \cite{JacobsB:catltt}. Write instead
\[\rz{\phi}\colon=\{(k_1,\ldots,k_n,m)\in\N^{n+1}\mid
m\Vdash_\rkl\phi[k_1/x_1,\dots,k_n/x_n]\},\]
where $\Vdash_\rkl$ is Kleene realizability as presented in
\cite{DT88}, and let $\gm{\phi}:\rz{\phi}\to\N$ be the function which
maps an $(n+1)$-ple to its encoding. If we let
$\dsi{\phi}:\rz{\phi}\to\N^n$ be the projection on the first $n$
components, we obtain an arrow of partitioned assemblies
$\dsi{\phi}:(\rz{\phi},\gm{\phi})\to\NN^n$. So
\ec{\dsi{\phi}:(\rz{\phi},\gm{\phi})\to\NN^n} is an object of
$\Wsb{\pass}(\NN^n)$.

\begin{prop}\label{kleeneequiv}
For any \ha-formula $\phi$ with at most $n$ free variables
$x_1,\dots,x_n$, it is
\[\ec{\phi^I}=\ec{\dsi{\phi}}.\]
\end{prop}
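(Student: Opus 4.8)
The plan is to proceed by structural induction on the formula $\phi$ of \ha. The statement asserts that the categorical interpretation $\ec{\phi^I}$ of $\phi$ in $\Wsb{\pass}$ over $\NN^n$, computed by the rules of \cite{JacobsB:catltt}, coincides with the object $\ec{\dsi{\phi}}$ built directly from Kleene's realizability clauses. Both sides live in the poset $\Wsb{\pass}(\NN^n)$, so equality means they have the same poset-reflection class; concretely, one must exhibit mutually inverse (up to the pseudo-equivalence defining variations) morphisms of partitioned assemblies over $\NN^n$ between a representative of $\phi^I$ and the assembly $(\rz{\phi},\gm{\phi})$. The key observation making the induction work is that, by \exx\ref{propex}(b), $\Wsb{\pass}$ is an \aritm \whyper whose fibrewise connectives and quantifiers are computed via weak pullbacks, weak coproducts and the left/right adjoints $\Sigma_f,\Pi_f$ along $\Wsb{\pass}(f)$; the content of the proposition is precisely that each such categorical operation, applied to the descent objects $\ec{\dsi{\psi}}$, reproduces the corresponding Kleene realizability clause on realizers.

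First I would fix, once and for all, the recursive encoding $\epl{-,-}$ of pairs and the primitive-recursive machinery (sum, product, the Kleene arrows $\TU,\UU$) already available in \pass, so that each realizability clause can be matched against a concrete tracking function. The base cases are the atomic formulas: for $t=s$ the realizer set is essentially the diagonal, which is exactly the equality predicate $\delta_\NN$ transported along the interpretations of the terms $t,s$, matching $\phi^I$ by definition of the elementary structure. The inductive steps then treat each connective in turn. For conjunction and disjunction I would use that the Kleene clauses $m\Vdash_{\rkl}\psi\Land\chi$ and $m\Vdash_{\rkl}\psi\Lor\chi$ pair, respectively tag-and-sum, realizers; on the categorical side $\ec{\dsi\psi}\Land\ec{\dsi\chi}$ is a weak pullback (fibre product of the realizer assemblies over $\NN^n$) and $\ec{\dsi\psi}\Lor\ec{\dsi\chi}$ is the weak coproduct, and the encoding functions $\gm{-}$ make the tracking compatible. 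For implication $\psi\Implies\chi$ the Kleene clause quantifies over realizers of $\psi$ producing realizers of $\chi$, which is exactly the Heyting implication in $\Wsb{\pass}$, and one can invoke \crx\ref{ddd} to control how $\RR$ (hence comprehension) interacts with $\Implies$ and $\Pi$ so that the realizer-level computation lines up. For the quantifiers $\Exists x.\psi$ and $\Forall x.\psi$ one applies $\Sigma_{\fst}$ and $\Pi_{\fst}$ along the projection $\NN^{n+1}\to\NN^n$, matching the realizability clauses that package a witness-plus-realizer, respectively a uniformly tracked family of realizers; here the explicit weak-exponential description of $\Pi$ in \pass, with tracks given by codes of recursive functions, is what reproduces $m\Vdash_{\rkl}\Forall x.\psi$.

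The main obstacle I expect is the implication/universal-quantifier step, because the realizability clause for $\Implies$ and for $\Forall$ demands a \emph{uniform} recursive index transforming input realizers into output realizers, and matching this against the categorical $\Pi_{\fst}$ (respectively Heyting implication) in a \whyper is delicate precisely because, as the first Remark after the definition of \whyper warns, one cannot substitute inside $\lambda$-terms and the weak evaluation is not uniquely determined. The remedy is to work entirely through the weak-exponential presentation of \pass recalled before \rmx\ref{rcar}, where $W\colon=\{(f,t)\mid t\text{ tracks }f\}$ carries its tracking datum explicitly, so that a morphism into the $\Pi$-object literally \emph{is} a code together with a proof that it transforms realizers correctly; \rmx\ref{nondipendedaeval} then guarantees independence from the chosen weak evaluation. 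With that presentation fixed, each comparison morphism between $\ec{\phi^I}$ and $\ec{\dsi\phi}$ is witnessed by an honest partial recursive function built from $\epl{-,-}$, $\TU$, $\UU$ and the inductive hypotheses, and the two resulting morphisms of $\Wsb{\pass}(\NN^n)$ are mutually inverse in the poset reflection, giving $\ec{\phi^I}=\ec{\dsi\phi}$ and closing the induction.
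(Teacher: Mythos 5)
Your proposal is correct and takes essentially the same route as the paper, whose proof is exactly the induction you describe: an induction on the height of the \ha-formula in which each Kleene realizability clause is matched against the corresponding fibrewise connective or quantifier of \Wsb{\pass} computed through the explicit constructions in \pass (weak pullbacks, weak coproducts, and the tracked weak exponentials). One minor simplification: since $\Wsb{\pass}(\NN^n)$ is a poset reflection, the equality $\ec{\phi^I}=\ec{\dsi{\phi}}$ only requires comparison morphisms over $\NN^n$ in each direction, not mutually inverse ones, so the closing step of your induction is even easier than you anticipate.
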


The proof is an easy induction on the height of the formula $\phi$ and
it is based on the constructions in \pass. 

\begin{cor}\label{kleenearitm} A sentence $\phi$ in the language of \ha 
is true in the standard interpretation in $\Wsb{\pass}$, if and only
if $\phi$ has a realizer in the sense of Kleene realizability
interpretation in \cite{KleeneS:intint}.
\end{cor}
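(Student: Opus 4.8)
The plan is to derive \crx\ref{kleenearitm} as an immediate consequence of \prx\ref{kleeneequiv}, which has already done the real work. First I would recall that a sentence $\phi$ in the language of \ha has no free variables, so it is the case $n=0$ of the construction: the interpretation $[\phi^I:X\to\N^0]$ lands over the terminal object $\NN^0=\spe1$, and \ec{\phi^I} is therefore a sentence in $\Wsb{\pass}(\spe1)=\Wsb{\pass}(\NN^0)$. By definition, $\phi$ is \emph{true in the standard interpretation in} $\Wsb{\pass}$ exactly when $\vdash\ec{\phi^I}$, \ie when $\tt\le\ec{\phi^I}$ in the fibre over the terminal object.

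Next I would unwind the right-hand side of \prx\ref{kleeneequiv} in this degenerate case. With $n=0$ the set $\rz\phi$ reduces to $\{m\in\N\mid m\Vdash_\rkl\phi\}$, the set of Kleene realizers of the closed formula $\phi$, equipped with the partitioned-assembly structure $\gm\phi$ sending a realizer to (a code for) itself; and $\dsi\phi:\rz\phi\to\N^0$ is the unique map to \spe1. So \ec{\dsi\phi} is the variation over \spe1 represented by the canonical arrow $(\rz\phi,\gm\phi)\to\spe1$. The key point is that in a doctrine of variations a sentence $\ec{g:Z\to\spe1}$ over the terminal object satisfies $\tt\le\ec{g}$ if and only if the domain $Z$ admits an arrow from \spe1 (equivalently, $\tt=[\id{\spe1}]$ factors through $[g]$ in the poset reflection of $\pass/\spe1$), which for partitioned assemblies amounts to $(\rz\phi,\gm\phi)$ being inhabited by a global point, \ie to $\rz\phi\neq\emptyset$. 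Thus $\vdash\ec{\dsi\phi}$ holds precisely when $\phi$ has at least one Kleene realizer.

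Combining the two descriptions via the equality \ec{\phi^I}=\ec{\dsi\phi} of \prx\ref{kleeneequiv}, I would conclude that $\phi$ is true in the standard interpretation in $\Wsb{\pass}$ if and only if $\rz\phi\neq\emptyset$, which is exactly the statement that $\phi$ has a realizer in the sense of Kleene's interpretation of \cite{KleeneS:intint}. The main obstacle—such as it is—is the careful treatment of the $n=0$ case: one must check that the order relation $\tt\le[g]$ in the poset reflection $\Wsb{\pass}(\spe1)$ really does coincide with mere inhabitation of the domain, rather than with the stronger requirement of a section of $g$. Since \spe1 is the terminal partitioned assembly $(\{0\},\{(0,0)\})$, any arrow $\spe1\to(\rz\phi,\gm\phi)$ is just a tracked choice of a single realizer, and existence of such an arrow is equivalent to $\rz\phi$ being nonempty; so this check goes through and the corollary follows.
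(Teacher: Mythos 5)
Your proof is correct and takes essentially the same route as the paper, which offers no separate argument for \crx\ref{kleenearitm} but treats it precisely as the specialization of \prx\ref{kleeneequiv} to sentences. Your careful unwinding of the $n=0$ case—that truth of $\ec{\phi^I}$ over the terminal object amounts, via $\ec{\phi^I}=\ec{\dsi{\phi}}$, to the existence of a global point of $(\rz{\phi},\gm{\phi})$, hence to $\rz{\phi}\neq\emptyset$—is exactly the detail the paper leaves implicit.
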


\begin{cor}[Hyland]\label{kleenearitm2} A sentence $\phi$ in the language of \ha 
is true in the standard interpretation in $\Sb{\eff}$, if and only
if $\phi$ has a realizer in the sense of Kleene realizability
interpretation in \cite{KleeneS:intint}.
\end{cor}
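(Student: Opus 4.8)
The strategy is to transfer \crx\ref{kleenearitm} along the identification $\Sb{\eff}\equiv\Q{\Wsb{\pass}}$. By \rmx\ref{rcar} one has $\eff\equiv\pass\exl$, and section~\ref{rcp} recalls that $\Sb{\ct{C}\exl}\equiv\Q{\Wsb{\ct{C}}}$; taking $\ct{C}=\pass$ gives $\Sb{\eff}\equiv\Q{\Wsb{\pass}}$. Under this equivalence the standard interpretation of \ha in $\Sb{\eff}$ is performed on the \nno of $\ct{Q}_{\Wsb{\pass}}$, which by \lmx\ref{eqc-aritm0} is $((\NN,\delta_\NN),[\oo],[\ss])$, together with the arithmetic operations it carries. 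These are exactly the image, under the embedding $A\mapsto(A,\delta_A)$, of the \nno $(\NN,\oo,\ss)$ of \pass and of its standard operations, so the interpretation of every \ha-term matches on the two sides.

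The heart of the proof is to show that, for any \ha-sentence $\phi$, the interpretation $\ec{\phi^I}$ computed in $\Sb{\eff}$ coincides with the one computed in $\Wsb{\pass}$. I would invoke \lmx 5.7 of \cite{MaiettiME:quofcm}, which identifies $\Wsb{\pass}$ with the full sub-doctrine of $\Q{\Wsb{\pass}}$ on the objects of the form $(A,\delta_A)$: over such an object the fibre $\Q{\Wsb{\pass}}(A,\delta_A)=\des{\delta_A}$ is all of $\Wsb{\pass}(A)$, since the condition defining descent data for the equality predicate is satisfied by every element by Leibniz substitution. Proceeding by induction on the structure of $\phi$, exactly as in the proof of \prx\ref{CT}, each logical connective, each quantifier and the equality predicate used in interpreting $\phi$ over a finite power of $(\NN,\delta_\NN)$ is the restriction of the corresponding operation of $\Wsb{\pass}$ over the same finite power of $\NN$. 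Hence the two interpretations are carried into one another by the equivalence; in particular, for a sentence $\phi$ (so $n=0$) whose interpretation lives over the terminal object, $\vdash\ec{\phi^I}$ holds in $\Sb{\eff}$ if and only if it holds in $\Wsb{\pass}$.

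Combining this equivalence with \crx\ref{kleenearitm} yields the claim: $\phi$ is true in the standard interpretation in $\Sb{\eff}$ if and only if it is true in $\Wsb{\pass}$, which is the case if and only if $\phi$ has a realizer in the sense of Kleene.

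The step I expect to be most delicate is the inductive preservation of the \emph{full} first-order structure. Whereas \prx\ref{CT}(i) only needs that conjunction, quantification and equality over finite powers of $(\NN,\delta_\NN)$ restrict from $\Q{\Wsb{\pass}}$ to $\Wsb{\pass}$, a general \ha-formula also involves disjunction, Heyting implication and negation, so one must verify that the embedding preserves the entire Heyting structure of the fibres together with both adjoints along the projections of powers of $(\NN,\delta_\NN)$. The identification of $\Q{\Wsb{\pass}}(A,\delta_A)$ with $\Wsb{\pass}(A)$ makes the propositional connectives immediate, while the cases of $\exists$ and $\forall$ reduce to checking that quantification along a projection of powers of $(\NN,\delta_\NN)$ in $\ct{Q}_{\Wsb{\pass}}$ restricts to quantification along the corresponding projection in \pass---precisely the content furnished by \lmx 5.7 of \cite{MaiettiME:quofcm}.
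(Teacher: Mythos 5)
Your proposal is correct and takes essentially the same route as the paper's proof: identify $\Sb{\eff}$ with the elementary quotient completion $\Q{\Wsb{\pass}}$ (via $\eff\equiv\pass\exl$), observe that the \nno of \eff is $(\NN,\delta_\NN)$ and that over its finite powers the fibres, quantifiers, connectives and equality predicate of $\Sb{\eff}$ are exactly those of $\Wsb{\pass}$, and conclude by \crx\ref{kleenearitm}. Your explicit induction and the check of the full Heyting structure (including disjunction, implication and negation) merely spell out what the paper compresses into the sentence that ``quantifications, connectives and the equality predicate are the same.''
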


\begin{proof}
By \crx\ref{car3} $\Sb{\eff}$ is the elementary quotient completion of
$\Wsb{\pass}$, therefore the \nno in \eff is of the form
$(\NN,\delta_\NN)$ where $\NN$ is a \nno in \pass, then not only
$\Sb{\eff}(\NN,\delta_\NN)=\Wsb{\pass}(\NN)$, but quantifications,
connectives and the equality predicate are the same. The claim follows
by \crx\ref{kleenearitm}.
\end{proof}

\section{Conclusion}
We have shown that 
\eff and \ass are elementary quotient completion of suitable doctrines.
This fact is crucial to build models for the Minimalist Foundation
(\mf), introduced in \cite{mtt,m09}, extended with the various forms
of \fct. The reason is 
that \mf in \cite{m09} has a two-level structure with an extensional
level interpreted in the elementary quotient completion of its
intensional level, as analyzed categorically in
\cite{MaiettiME:quofcm}. Hence modeling \mf in \eff (or in \ass)
corresponds to build a morphism of doctrines from the 
elementary quotient completion of the syntactic doctrine of \mf to doctrines based on \eff (or \ass).

In particular we would like to embed in \eff
the already known models which provides the consistency of both levels
of \mf with \fct in \cite{types15,trip,IMMSaml}. Since these models
provide extraction of programs from constructive proofs in \mf as
shown in \cite{tamc}, we think that \eff should provide a framework to
extend extraction of programs from proofs to extensions of \mf with
general inductive definitions.

Finally we would also like to exploit the categorical structure
of \ass to build models similar to that in \cite{StreicherT:indipa}
in order to show consistency of \mf (and of its extensions with
inductive definitions) with classical logic and the weak form of \fct
valid in the doctrine $\pw\Gamma$.

\end{document}